\date{\today} 
\newcommand{\g}{{\mathfrak g}}
\newcommand{\fa}{{\mathfrak a}}
\newcommand{\ff}{{\mathfrak f}}
\newcommand{\fh}{{\mathfrak h}}
\newcommand{\fk}{{\mathfrak k}}
\newcommand{\fl}{{\mathfrak l}}
\newcommand{\fq}{{\mathfrak q}}
\newcommand{\fp}{{\mathfrak p}}
\newcommand{\fr}{{\mathfrak r}}
\newcommand{\fs}{{\mathfrak s}}
\newcommand{\ft}{{\mathfrak t}}
\newcommand{\fu}{{\mathfrak u}}
\newcommand{\fz}{{\mathfrak z}}
\newcommand{\tV}{{\mathtt V}}
\renewcommand\sp{\mathfrak {sp}} 
\newcommand\hsp{\mathfrak {hsp}} 
\newcommand\heis{\mathfrak {heis}}
\newcommand{\1}{\mathbf{1}}
\newcommand{\cH}{\mathcal{H}}
\newcommand{\cM}{\mathcal{M}}
\newcommand{\cO}{\mathcal{O}}
\newcommand{\cW}{\mathcal{W}}
\newcommand{\N}{{\mathbb N}}
\newcommand{\R}{{\mathbb R}}
\newcommand{\C}{{\mathbb C}}
\newcommand{\K}{{\mathbb K}}
\renewcommand{\H}{{\mathbb H}}
\renewcommand{\tilde}{\widetilde}
\renewcommand{\L}{\mathop{\bf L{}}\nolimits}
\newcommand{\GL}{\mathop{{\rm GL}}\nolimits}
\newcommand{\gl}  {\mathop{{\mathfrak{gl} }}\nolimits}
\newcommand{\fsl} {\mathop{{\mathfrak{sl} }}\nolimits}
\newcommand{\hcsp}{\mathfrak{hcsp}}
\newcommand{\Fix}{\mathop{{\rm Fix}}\nolimits}
\newcommand{\ad}{\mathop{{\rm ad}}\nolimits}
\newcommand{\Ad}{\mathop{{\rm Ad}}\nolimits}
\newcommand{\fix}{\mathop{{\rm fix}}\nolimits}
\newcommand{\eff}{\mathop{{\rm eff}}\nolimits}
\newcommand{\Hom}{\mathop{{\rm Hom}}\nolimits}
\newcommand{\Pol}{\mathop{{\rm Pol}}\nolimits}
\newcommand{\Aut}{\mathop{{\rm Aut}}\nolimits}
\newcommand{\der}{\mathop{{\rm der}}\nolimits}
\newcommand{\End}{\mathop{{\rm End}}\nolimits}
\newcommand{\id}{\mathop{{\rm id}}\nolimits}
\newcommand{\rad}{\mathop{{\rm rad}}\nolimits}
\renewcommand{\dim}{\mathop{{\rm dim}}\nolimits}
\newcommand{\spec}{\mathop{{\rm spec}}\nolimits}
\newcommand{\Inn}{\mathop{{\rm Inn}}\nolimits}
\newcommand{\cone}{\mathop{{\rm cone}}\nolimits}
\newcommand{\oline}{\overline}
\newcommand{\la}{\langle}
\newcommand{\ra}{\rangle}
\newcommand{\spann}{{\rm span}}
\def\theoremname{Theorem}
\def\propositionname{Proposition}
\def\corollaryname{Corollary}
\def\lemmaname{Lemma}
\def\remarkname{Remark}
\def\conjecturename{Conjecture} 
\def\definitionname{Definition}
\def\exercisename{Exercise}
\def\examplename{Example}
\def\examplesname{Examples}
\def\problemname{Problem}
\def\problemsname{Problems}
\def\@thmcounter#1{\noexpand\arabic{#1}}
\def\@thmcountersep{}
\def\@begintheorem#1#2{\it \trivlist \item[\hskip 
\labelsep{\bf #1\ #2.\quad}]}
\def\@opargbegintheorem#1#2#3{\it \trivlist
      \item[\hskip \labelsep{\bf #1\ #2.\quad{\rm #3}}]}
\newtheorem{theor}{\theoremname}[section]
\newtheorem{propo}[theor]{\propositionname}
\newtheorem{coro}[theor]{\corollaryname}
\newtheorem{lemm}[theor]{\lemmaname}
\newenvironment{thm}{\begin{theor}\it}{\end{theor}}
\newenvironment{prop}{\begin{propo}\it}{\end{propo}}
\newenvironment{cor}{\begin{coro}\it}{\end{coro}}
\newenvironment{lem}{\begin{lemm}\it}{\end{lemm}}
\newtheorem{rema}[theor]{\remarkname}
\newenvironment{rem}{\begin{rema}\rm}{\end{rema}}
\newtheorem{stepnow}[theor]{}
\newtheorem{defin}[theor]{\definitionname} 
\newenvironment{definition}{\begin{defin}\rm}{\end{defin}}
\newtheorem{exerc}{\exercisename}[section]
\newtheorem{exa}[theor]{\examplename}
\newenvironment{example}{\begin{exa}\rm}{\end{exa}}
\newtheorem{exas}[theor]{\examplesname}
\newtheorem{conj}[theor]{\conjecturename}
\newtheorem{pro}[theor]{\problemname}
\newtheorem{prs}[theor]{\problemsname}
\newcommand{\pmat}[1]{\begin{pmatrix} #1 \end{pmatrix}}
\newcommand{\Cmin}{C_{\mathrm{min}}}
\newcommand{\Cmax}{C_{\mathrm{max}}}
\newcommand{\bbone}{\mathbb{1}}
\newcommand{\p}{\partial}
\newcommand{\tran}{\mathsf{T}}
\setlist[enumerate,1]{label={\rm (\alph*)}}
\setlist[enumerate,2]{label={\rm (\roman*)}}
\def\blfootnote{\xdef\@thefnmark{}\@footnotetext}
\begin{document}

\title{Lie wedges of endomorphism semigroups of standard subspaces in admissible Lie algebras} 

\author{Daniel Oeh\footnote{Department Mathematik, Friedrich--Alexander--Universit\"at Erlangen--N\"urnberg, Cauerstr. 11, D-91058 Erlangen, Germany, oehd@math.fau.de} \,\footnote{Supported by DFG-grant Ne 413/10-1}} 

\maketitle

\blfootnote{2010 \textit{Mathematics Subject Classification.} Primary 22E45; Secondary 81R05, 81T05}
\blfootnote{\textit{Key words and phrases.} Lie algebra, invariant cone, symplectic module of convex type, standard subspace.}

\begin{abstract}
  Let \(\g\) be a real finite-dimensional Lie algebra containing pointed generating invariant closed convex cones. We determine those derivations \(D\) of \(\g\) which induce a 3-grading of the form \(\g = \g_{-1} \oplus \g_0 \oplus \g_1\) on \(\g\) such that the \((\pm 1)\)-eigenspaces \(\g_{\pm 1}\) of \(D\) are generated by the intersections with generating cones of the form \(W = \cO_f^*\), where \(\cO_f\) is the coadjoint orbit of a linear functional \(f \in \fz(\g)^*\) and \(\cO_f^*\) is the dual cone of \(\cO_f\). In particular, we show that, if \(\g\) is solvable, no such derivation except the trivial one exists.

  This continues our classification of Lie algebras generated by Lie wedges of endomorphism semigroups of standard subspaces. The classification is motivated by the relation of nets of standard subspaces to Haag--Kastler nets of von Neumann algebras in Algebraic Quantum Field Theory.
\end{abstract}

\section{Introduction}
\label{sec:intro}

In this paper, we continue our study of the following classification problem: Let \((\g,\tau)\) be a symmetric finite-dimensional real Lie algebra and let \(W \subset \g\) be an \((e^{\ad \g})\)-invariant closed convex cone.
Let \(\fh\) be the \(1\)-eigenspace of \(\tau\) and \(\fq\) be the \((-1)\)-eigenspace of \(\tau\).
Moreover, let \(h \in \fh\).
Our goal is to classify the Lie algebras \(\g(W,\tau,h)\) which are generated by the \emph{Lie wedges}
\[C_-(W,\tau,h) \oplus [C_-(W,\tau,h), C_+(W,\tau,h)] \oplus C_+(W,\tau,h), \quad \text{where } C_\pm(W,\tau,h) := \g_{\pm 1}(h) \cap \fq \cap (\pm W),\]
and \(\g_\lambda(h)\) denotes the \(\lambda\)-eigenspace of \(\ad(h)\) for \(\lambda \in \R\).

Our motivation for studying the Lie algebras \(\g(W,\tau,h)\) comes from Algebraic Quantum Field Theory and, more specifically, standard subspaces: For a complex Hilbert space \(\cH\), a \emph{standard subspace} is a closed real subspace \(\tV\) of \(\cH\) such that \(\tV \cap i\tV = \{0\}\) and \(\tV + i\tV\) is dense in \(\cH\).
Every von Neumann algebra \(\cM\) in the algebra of bounded operators on \(\cH\) with a cyclic and separating vector \(\Omega \in \cH\) (that is, \(\cM\Omega\) is a dense subspace of \(\cH\) and the map \(\cM \ni A \mapsto A\Omega\) is injective) yields a standard subspace
\[\tV_\cM := \oline{\{A \Omega : A \in \cM, A^* = A\}}.\]
In particular, given a Haag--Kastler net (cf.\ \cite{Ha96}), i.e.\ a net of von Neumann algebras indexed by open regions in some spacetime, we obtain a corresponding net of standard subspaces if we assume the existence of a common cyclic and separating vector. Conversely, nets of von Neumann algebras can be constructed from nets of standard subspaces using Second Quantization (cf.\ \cite[Sec.\ 6]{NO17} and \cite{Ar63}).

In this context, we are particularly interested in inclusions of von Neumann algebras and thus, by the argument above, in inclusions of standard subspaces. More concretely, let \((U,\cH)\) be a strongly continuous unitary representation of a Lie group \(G\), let \((\cM,\Omega)\) be a von Neumann algebra with a cyclic and separating vector and let \(\tV := \tV_\cM\). Then the inclusion order on the orbit \(G.\cM := \{U(g)\cM U(g)^{-1} : g \in G\}\), respectively on the orbit \(U(G)\tV\), is determined by the subsemigroups
\[S_\cM := \{g \in G : U(g)\cM U(g)^{-1} \subset \cM\} \quad \text{and} \quad S_\tV := \{g \in G : U(g)\tV \subset \tV\}.\]
While we always have \(S_\cM \subset S_\tV\), this inclusion is proper in general, as shown in \cite[Ex.\ 5.1]{Ne19}.
Since standard subspaces are easier to study than the von Neumann algebras they were constructed from (we refer to \cite{Lo08} for more information about their structure), our main focus is the semigroup \(S_\tV\), which is also called the \emph{endomorphism semigroup of \(\tV\)}.
We make the following assumption on \(\tV\): Let \(\tau_G \in \Aut(G)\) be an involutive automorphism and let \(\tau := \L(\tau_G)\) and \(\g := \L(G)\), where \(\L\) denotes the Lie functor.
Suppose that the unitary representation \(U\) can be extended to a representation of the semidirect product \(G_\tau := G \rtimes \{\1, \tau_G\}\) in such a way that \(J := U(\tau_G)\) is antiunitary.
Let \(h \in \fh\) and let \(\p U(h)\) be the infinitesimal generator of \(t \mapsto U(\exp(th))\).
For \(\Delta := e^{2\pi i \p U(h)}\), we then obtain a standard subspace \(\tV_{\tau, h} := \Fix(J\Delta^{1/2})\) (cf.\ \cite{NO17}).
Our assumption is that \(\tV = \tV_{\tau,h}\) for some involutive automorphism \(\tau_G \in \Aut(G)\) and \(h \in \fh\).

Since \(S_\tV\) is a subsemigroup of the Lie group \(G\), we can consider its \emph{Lie wedge}
\[\L(S_\tV) := \{x \in \g : \exp(\R_{\geq 0} x) \subset S_\tV\}.\]
In the underlying setting, we can describe \(\L(S_\tV)\) in terms of the Lie algebraic data that we have used to construct \(\tV\): Let \(C_U := \{x \in \g : -i\p U(x) \geq 0\}\) be the \emph{positive cone} of \(U\) and suppose that the kernel of \(U\) is discrete, so that \(C_U\) is pointed, i.e.\ \(C_U \cap (-C_U) = \{0\}\). Then, by \cite[Thm.\ 4.4]{Ne19}, we have
\begin{equation}
  \label{eq:std-subspace-liewedge}
  \L(S_\tV) = C_-(C_U, \tau, h) + \fh_0(h) + C_+(C_U, \tau, h)
\end{equation}
and the cones \(C_{\pm}(C_U, \tau, h)\) span abelian subspaces. In particular, the Lie algebra generated by \(C_-(C_U, \tau, h)\) and \(C_+(C_U, \tau, h)\) is \(\g(C_U, \tau, h)\).

In \cite{Oeh20}, we showed that, if \(\g\) is hermitian simple, then the Lie subalgebras \(\g(W,\tau,h)\) are hermitian simple, and we classified  all possible Lie subalgebras \(\g(W,\tau,h)\) of \(\g\) up to isomorphy for a fixed involutive automorphism \(\tau\).
To address the general case, we first want to classify all possible Lie subalgebras \(\g(W,\tau,h)\) for a fixed \(\g\) up to isomorphy.
Since the restriction of \(\tau\) to \(\g(W,\tau,h)\) coincides with \(e^{i\pi\ad h}\lvert_{\g(W,\tau,h)}\) (cf.\ \cite[Thm.\ 4.4]{Ne19}), we may without loss of generality assume that \(\ad h\) is diagonalizable on \(\g\) with eigenvalues \(0\) and \(\pm 1\) and that \(\tau = e^{i\pi \ad h}\lvert_\g\).
A crucial step towards a classification in the non-reductive case is to study the Lie algebras of the form \(\g_e = \g \rtimes \R h\), where \(\g\) is a Lie algebra containing a generating invariant closed convex cone and \(h\) acts on \(\g\) by a derivation which induces a 3-grading on \(\g\) of the form \(\g = \g_{-1}(h) \oplus \g_0(h) \oplus \g_1(h)\).

In the hermitian setting of \cite{Oeh20}, it turns out that the subalgebras \(\g(W,\tau,h)\) do not depend on the choice of the cone as long as it is proper, which is due to their relation to simple euclidean Jordan algebras.
For general Lie algebras, it is not clear whether this simplification can be made.
However, in view of \eqref{eq:std-subspace-liewedge}, we only need to consider those pointed invariant cones \(W\) in our classification which occur as the positive cones of unitary representations of Lie groups \(G\) with Lie algebra \(\g\).
Since we also expect that \eqref{eq:std-subspace-liewedge} adapts to decompositions of the representation \(U\) into irreducible subrepresentations, we may assume that \(U\) is irreducible.
Under these assumptions, it is sufficient to consider the generating invariant cones \(W \subset \g\) which are of the form \(W = W_f := \cO_f^*\) for some \(f \in \fz(\g)^*\), where \(\cO_f\) is the \emph{coadjoint orbit} of the extension of \(f\) to \(\g\) along a decomposition \(\g = \ft + [\ft, \g]\) into a compactly embedded Cartan subalgebra \(\ft \subset \g\) and its image under the adjoint representation.
Moreover, \(\cO_f^*\) denotes the dual cone of \(\cO_f\). 
We will explain this reduction in more detail in Remark \ref{rem:spindler-cones-unitary-rep}.

In this paper, we give a description of all derivations \(D\) of \(\g\) inducing 3-gradings on \(\g\) which are suitable for the study of the Lie subalgebras \(\g(W,\tau,h)\). We then show that, for the generating invariant cones \(W_f \subset \g\) as above, the intersection \(\g_{\pm 1}(D) \cap W_f\) of \(W_f\) with the \((\pm 1)\)-eigenspaces of \(D\) is generating.

This paper consists of two main sections: In Section \ref{sec:liealg-inv-cone}, we consider the structure theory of Lie algebras \(\g\) containing invariant convex cones.
The main tool we will be using to study these Lie algebras is Spindler's construction from \cite{Sp88}, which we introduce in Section \ref{sec:lie-adm}.
It allows us to view \(\g\) as a semidirect product \(\fu \rtimes \fl\), where \(\fl\) is reductive and \(\fu\) is the maximal nilpotent ideal of \nolinebreak\(\g\).
Moreover, \(\fu\) can be written as \(\fu = \fz(\g) + V\), where \(V\) is a symplectic \(\fl\)-module of convex type. In Section \ref{sec:adm-reducompl}, we show that this decomposition can be chosen in such a way that \(\fl\) is \(D\)-invariant with \(D(\fz(\fl)) = \{0\}\) (cf.\ Theorem \ref{thm:adm-deriv-3grad-reducompl}). The proof uses results on algebraic groups, which we recall in Appendix \ref{sec:app-alg-grp}.

We then address our classification problem in Section \ref{sec:liewedge-adm}.
Our first main result is that, if \(\g\) is solvable with \(\fz(\g) \subset [\g,\g]\) and \(W \subset \g\) is a generating invariant closed convex cone such that \(W \cap -W\) is central in \(\g\), then there exists no non-trivial derivation \(D\) of \(\g\) which induces a 3-grading on \(\g\) such that the intersections \(W \cap \g_{\pm 1}(D)\) generate \(\g_{\pm 1}(D)\).
The general case is studied in Section \ref{sec:deriv-adm-gen-case}:
We first give a description of all derivations \(D\) of \(\g\) inducing a 3-grading using a suitable decomposition of \(\g\) in terms of Spindler's construction (cf. Classification Theorem \ref{thm:adm-deriv-3grad}).
In the Non-degeneration Theorem \ref{thm:adm-3grad-intcones}, we show that the intersections \(W_f \cap \g_{\pm 1}(D)\) are generating for these derivations if the cone \(W_f\) is generating for \(f \in \fz(\g)^*\). Finally, we discuss various examples of derivations that induce 3-gradings.

\subsection*{Notation}

\begin{itemize}
  \item For a vector space \(V\) over \(\K \in \{\R,\C\}\), a linear endomorphism \(A\) on \(V\), and \(\lambda \in \K\), we denote the \(\lambda\)-eigenspace of \(A\) by \(V_\lambda(A)\).

  \item Let \(\g\) be a Lie algebra and let \(x \in \g\). For an \(\ad(x)\)-invariant subspace \(V \subset \g\), we define \(V_\lambda(x) := \ker(\ad(x) - \lambda\id_\g)\).
  \item We denote by \(\Inn(\g) := \la e^{\ad \g} \ra\) the group of inner automorphisms of \(\g\).
  \item For a Lie algebra \(\g\), we denote the set of derivations of \(\g\) by \(\der(\g)\).
  \item For a subset \(W \subset V\) in a real vector space \(V\), we denote the \emph{dual cone of \(W\)} by
  \[W^* := \{f \in V^* : f(W) \subset \R_{\geq 0}\}.\]
  \item Let \(A\) be an algebra and let \(\tau\) be an involutive automorphism or antiautomorphism of \(A\). Then we denote the \((\pm 1)\)-eigenspace of \(\tau\) in \(A\) by \(A^{\pm \tau}\).
  \item Let \(V\) be a real vector space and let \(V_\C\) be its complexification. For \(z = x + iy\), where \(x,y \in V\), we define \(\oline{z} := x - iy\) and \(z^* := -x + iy\).
\end{itemize}

\section{Lie algebras containing invariant convex cones}
\label{sec:liealg-inv-cone}

In this section, we recall the structure theory of Lie algebras containing invariant convex cones.
We introduce Spindler's construction and recall some results about symplectic modules of convex type.
All Lie algebras containing pointed generating invariant closed convex cones are of the form \(\fu \rtimes \fl\), where \(\fu\) is nilpotent and \(\fl\) is a reductive Lie algebra for which all simple non-compact ideals are hermitian.
In Section \ref{sec:adm-reducompl}, we discuss the reductive complements \(\fl\) of such Lie algebras which are invariant under a given semisimple derivation.

\subsection{Admissible Lie algebras}
\label{sec:conv-liealg}

Lie algebras containing pointed generating invariant closed convex cones are called \emph{admissible}. Such Lie algebras contain compactly embedded Cartan subalgebras, which yield a root space decomposition. We recall some important facts about admissible Lie algebras in this section which will be used later. The content of this section is mostly identical to \cite[Section 2.1]{Oeh20}.

\begin{definition}
  A real finite dimensional Lie algebra \(\g\) is called \emph{admissible} if it contains a generating invariant closed convex subset \(C\) with \(H(C) := \{x \in \g : C + x = C\} = \{0\}\), i.e.\ \(C\) contains no affine lines.
\end{definition}

\begin{definition}
  Let \(\g\) be a Lie algebra and \(\fk \subset \g\) be a subalgebra. Then \(\fk\) is said to be \emph{compactly embedded} if the subgroup generated by \(e^{\ad \fk}\) is relatively compact in \(\Aut(\g)\).
\end{definition}

Every admissible Lie algebra \(\g\) contains a compactly embedded Cartan subalgebra \(\ft\) by \cite[Thm.\ VII.3.10]{Ne00}.
Let \(\fk\) be a maximal compactly embedded subalgebra which contains \(\ft\).
Since \(\ft_\C\) is a Cartan subalgebra of the complexification \(\g_\C\) of \(\g\), we obtain a root space decomposition of \nolinebreak\(\g_\C\).
We denote the corresponding set of roots by \(\Delta := \Delta(\g_\C, \ft_\C)\).
The set of roots can be further decomposed into the following subsets:

\begin{itemize}
  \item The set of \emph{compact roots} \(\Delta_k := \{\alpha \in \Delta : \g_\C^\alpha \subset \fk_\C\}\).
    The set of \emph{non-compact roots} is denoted by \(\Delta_p := \Delta \setminus \Delta_k\).
  \item The set of \emph{semisimple roots} \(\Delta_s := \{\alpha \in \Delta : (\exists Z \in \g_\C^\alpha)\, \alpha([Z,Z^*]) \neq 0\}\). The roots in the complement \(\Delta_r := \Delta \setminus \Delta_s\) are called \emph{solvable}.
\end{itemize}

Let \(\Delta^+ \subset \Delta\) be a system of positive roots. Then there exists an element \(X \in i\ft\) such that \(\Delta^+ = \{\alpha \in \Delta : \alpha(X) > 0\}\) and \(\alpha(X) \neq 0\) for all \(\alpha \in \Delta\). We say that \(\Delta^+\) is \emph{adapted} if \(\beta(X) > \alpha(X)\) for all \(\alpha \in \Delta_k, \beta \in \Delta_p^+\).

A Lie algebra \(\g\) which contains a compactly embedded Cartan subalgebra is called \emph{quasihermitian} if there exists an adapted system of positive roots.
An equivalent condition is that \(\fz_\g(\fz(\fk)) = \fk\) for a maximal compactly embedded subalgebra \(\fk\) of \(\g\) (cf.\ \cite[Prop.\ VII.2.14]{Ne00}). Every simple quasihermitian Lie algebra is either compact or satisfies \(\fz(\fk) \neq \{0\}\). A simple quasihermitian non-compact Lie algebra is called \emph{hermitian}.

\begin{definition}
  \label{def:weyl-group-compact}
  Let \(\g\) be admissible, let \(\fk\) be a compactly embedded subalgebra of \(\g\), and let \(\ft \subset \fk\) be a compactly embedded Cartan subalgebra. Denote by \(\Delta := \Delta(\g_\C,\ft_\C)\) the corresponding set of roots. For every \(\alpha \in \Delta_k\), there exists a unique element \(\alpha^\vee \in [\g_\C^\alpha,\g_\C^{-\alpha}]\) with \(\alpha(\alpha^\vee) = 2\). We call the group \(\cW_\fk\) generated by the reflections
  \[s_\alpha : \ft \rightarrow \ft, \quad X \mapsto X - \alpha(X)\alpha^\vee, \quad \alpha \in \Delta_k,\]
  the \emph{Weyl group of the pair \((\fk, \ft)\)} (cf.\ \cite[Def.\ VII.2.8]{Ne00}).
\end{definition}

With \(\g,\fk,\ft\), and \(\Delta\) as in Definition \ref{def:weyl-group-compact} and a system of positive roots \(\Delta^+ \subset \Delta\), we define the convex cones
\[\Cmin := \Cmin(\Delta_p^+) := \cone\{i[Z_\alpha,\oline{Z_\alpha}] : Z_\alpha \in \g_\C^\alpha, \alpha \in \Delta_p^+\} \subset \ft\]
and
\[\Cmax := \Cmax(\Delta_p^+) := \{X \in \ft : (\forall \alpha \in \Delta_p^+)\, i\alpha(X) \geq 0\}.\]
Then, for every pointed generating closed convex cone \(W \subset \g\), there exists a unique adapted system of positive roots such that \(\Cmin \subset W \cap \ft \subset \Cmax\) (cf.\ \cite[Thm.\ VII.3.8]{Ne00}).

Conversely, every generating \(\cW_\fk\)-invariant cone \(C \subset \ft\) with \(\Cmin \subset C \subset \Cmax\) for some adapted system of positive roots uniquely determines a generating invariant closed convex cone \(W\) with \(W \cap \ft = C\) (cf.\ \cite[Thm.\ VIII.3.21]{Ne00}).

\subsection{Hermitian Lie algebras}
\label{sec:herm-liealg}

In this section, we briefly recall the most important facts about hermitian simple Lie algebras which will be used in the following sections.

The list of isomorphy classes of hermitian simple Lie algebras (cf.\ for example \cite[Thm.\ A.V.1]{Ne00}) can be divided into two types: A hermitian Lie algebra \(\g\) is said to be of \emph{tube type} if there exists an element \(h \in \g \setminus \{0\}\) such that \(\ad h\) induces a 3-grading on \(\g\), i.e.\ we have \(\g = \g_{-1}(h) \oplus \g_0(h) \oplus \g_1(h)\). If no such element exists, then \(\g\) is said to be of \emph{non-tube type}. Let \(\fa \subset \fp\) be a maximal abelian subspace of \(\g = \fk \oplus \fp\) with respect to a fixed Cartan decomposition. By Moore's Theorem (cf.\ \cite[Thm.\ A.4.4]{HO97}), the restricted root system \(\Sigma \subset \fa^*\) is of type (\(C_r\)) if \(\g\) is of tube type, i.e.\ we have
\[\Sigma = \{\pm(\varepsilon_j \pm \varepsilon_i) : 1 \leq i < j \leq r\} \cup \{\pm 2\varepsilon_j : 1 \leq j \leq r\} \cong C_r.\]
An inspection of \(\Sigma\) shows that, for a fixed Cartan decomposition \(\g = \fk \oplus \fp\) and a fixed maximal abelian subspace \(\fa \subset \fp\), there exists only one Weyl group orbit of elements \(h\) inducing a 3-grading on \(\g\) of the form \(\g = \g_{-1}(h) \oplus \g_0(h) \oplus \g_1(h)\) (cf.\ \cite[Thm.\ 3.10]{MN20}). In particular, those elements \(h \in \g \setminus \{0\}\) for which \(\ad h\) induces a 3-grading on \(\g\) are conjugate under inner automorphisms.

\begin{example}
  \label{ex:herm-sp}
  Consider the hermitian simple Lie algebra \(\g = \sp(2n,\R)\), where \(n \in \N\). We fix the Cartan involution \(\theta(X) := -X^\tran\) and denote the corresponding Cartan decomposition by \(\g = \fk \oplus \fp\). Then a maximal abelian subspace \(\fa \subset \fp\) is given by the diagonal matrices in \(\g\). The element
  \[h := \frac{1}{2} \pmat{\bbone_n & 0 \\ 0 & -\bbone_n} \in \fp\]
  induces a 3-grading on \(\g\) via the adjoint representation. In particular, for all elements \(h \in \g \setminus \{0\}\) such that \(\ad h\) induces a 3-grading on \(\g\), the linear map \(2h\) is an antisymplectic involution on \(\R^{2n}\).

  For \(\fsl(2,\R) = \sp(2,\R)\), we have
  \[h = \tfrac{1}{2}H, \quad \text{where} \quad H := \pmat{1 & 0 \\ 0 & -1}.\]
\end{example}

For every hermitian simple Lie algebra \(\g\), the center of a maximal compactly embedded subalgebra \(\fk \subset \g\) is one-dimensional. In particular, there exists an element \(H_0 \in \fz(\fk)\) such that \(\spec(\ad H_0) = \{0,i,-i\}\). Such an element is called an \emph{H-element}:

\begin{definition}
  \label{def:h-element}
  (cf.\ \cite[Def.\ II.1]{HNO94}) Let \(\g\) be a reductive Lie algebra. Then \(H_0 \in \g\) is called an \emph{\(H\)-element} if \(\ker \ad(H_0)\) is a maximal compactly embedded subalgebra of \(\g\) and \(\spec(\ad H_0) = \{0,i,-i\}\). The pair \((\g, H_0)\) is called a \emph{Lie algebra of hermitian type}.
\end{definition}

Every \(H\)-element \(H_0\) of a Lie algebra \(\g\) of hermitian type determines a Cartan decomposition by \(\fk := \ker(\ad H_0)\) and \(\fp := [H_0,\g]\).

In the case of the hermitian simple Lie algebra \(\fsl(2,\R)\), an \(H\)-element is given by \(\frac{1}{2}U\), where
  \[U:= \pmat{0 & 1 \\ -1 & 0},\]
  and we have \(\fk = \ker(\ad U) = \R U.\)

  \begin{definition}
    A homomorphism \(\kappa : \g \rightarrow \tilde \g\) between two Lie algebras \((\g,H_0)\) and \((\tilde \g, \tilde H_0)\) of hermitian type is called an \emph{\((H_1)\)-homomorphism} if \(\kappa \circ \ad H_0 = \ad \tilde H_0 \circ \kappa\) and is called an \emph{\((H_2)\)-homomorphism} if \(\kappa(H_0) = \tilde H_0\). In either case, we denote this compatibility by \(\kappa: (\g,H_0) \rightarrow (\tilde \g, \tilde H_0)\).
  \end{definition}

\begin{lem}
  \label{lem:herm-lie-3grad-h2-homo}
  Let \((\g,H_0)\) be a hermitian simple Lie algebra of tube type with \(H\)-element \(H_0\) and let \(h \in [H_0, \g] \setminus \{0\}\) such that \(\g = \g_{-1}(h) \oplus \g_0(h) \oplus \g_1(h)\). Then there exists an \(H_2\)-homomorphism \(\kappa: (\fsl(2,\R), \frac{1}{2}U) \rightarrow (\g,H_0)\) such that \(\kappa(\frac{1}{2}H) = h\) (cf.\ {\rm Example \ref{ex:herm-sp}}).
\end{lem}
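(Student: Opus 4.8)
The plan is to construct the $H_2$-homomorphism $\kappa$ by reducing to the structure of simple hermitian Lie algebras of tube type and exploiting the fact that all 3-grading elements $h$ in $[H_0,\g]$ are conjugate under inner automorphisms. The key input is the remark made just before the statement: for a fixed Cartan decomposition $\g = \fk \oplus \fp$ and maximal abelian $\fa \subset \fp$, the 3-grading elements form a single Weyl group orbit (via \cite[Thm.~3.10]{MN20}), and in general any two 3-grading elements are $\Inn(\g)$-conjugate. So first I would normalise: using an inner automorphism, I may assume $h \in \fa \subset \fp = [H_0,\g]$, and moreover (after possibly also conjugating $H_0$, which stays an $H$-element) that $h$ is the ``standard'' 3-grading element. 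Concretely, writing the restricted root system $\Sigma \cong C_r$ with coordinates $\varepsilon_1,\dots,\varepsilon_r$, the element $h$ is characterised by $\varepsilon_j(h) = \tfrac12$ for all $j$, so that the root spaces $\g^\alpha$ with $\alpha$ evaluating to $+1$ on $h$ (i.e. $\alpha = \varepsilon_i + \varepsilon_j$ or $\alpha = 2\varepsilon_j$) span $\g_1(h)$, those evaluating to $-1$ span $\g_{-1}(h)$, and $\g_0(h)$ is the span of $\fa$, the roots $\varepsilon_i - \varepsilon_j$, and the centraliser $\fm = \fz_\fk(\fa)$.

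**Next I would** produce the $\fsl(2,\R)$-triple. The $2\varepsilon_r$-root space (a long root of $C_r$) gives a nonzero nilpotent $e \in \g_1(h)$ with $\ad(h)e = e$; let $f \in \g_{-1}(h)$ be the corresponding element from the $(-2\varepsilon_r)$-space, normalised via the Cartan decomposition so that $(e,h',f)$ is an $\fsl_2$-triple with $h' := [e,f]$ proportional to $\varepsilon_r^\vee$. The point is that $\varepsilon_r$ is a long root, hence $\varepsilon_r^\vee$ is (a multiple of) the $h$ we want --- since $h$ is the unique (up to the above normalisation) element of $\fa$ on which \emph{every} $\varepsilon_j$ evaluates to $\tfrac12$, and $\varepsilon_r^\vee$ is orthogonal to all $\varepsilon_i - \varepsilon_j$, one checks $\varepsilon_r^\vee$ lands on the same ray. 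Rescaling the triple, I get an embedding $\fsl(2,\R) \hookrightarrow \g$ sending $\tfrac12 H \mapsto h$. This is an $(H_2)$-homomorphism provided it also intertwines the $H$-elements, i.e. sends $\tfrac12 U \mapsto H_0$. To arrange this, I would instead build the triple inside the $\theta$-stable $\fsl(2,\R)$ generated by a root vector for the long noncompact root: its H-element is forced to be $\tfrac12 U$ by the requirement $\spec(\ad U)\lvert_{\fsl_2} = \{0,\pm i\}$, and one must verify that its image in $\g$ has spectrum compatible with $H_0$, which follows because the restricted root $2\varepsilon_r$ and the ``$i$-eigenvalue'' structure of $\ad H_0$ are linked by the tube-type condition (the Cayley transform exchanging $H_0$ and $h$ is exactly the exponential of an element of this $\fsl_2$).

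**The main obstacle** I anticipate is the bookkeeping that makes $\kappa$ an $(H_2)$-homomorphism rather than merely an $(H_1)$-homomorphism: one has to track how the $H$-element $H_0$, which lives in $\fz(\fk)$, restricts to the subalgebra $\kappa(\fsl(2,\R))$ and confirm it equals $\kappa(\tfrac12 U)$ exactly, not just up to a compact automorphism. The clean way to see this is via the Cayley transform: in a hermitian Lie algebra of tube type there is a standard element $c = \exp\bigl(\tfrac{\pi}{4}\ad(e-f)\bigr)$ (with $e \in \g_1(h)$, $f \in \g_{-1}(h)$ root vectors for $\pm 2\varepsilon_r$ suitably normalised) conjugating $h$ to the $H$-element; this is the image under $\kappa$ of the corresponding Cayley element in $\PSL(2,\R)$, which conjugates $\tfrac12 H$ to $\tfrac12 U$. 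Matching these two Cayley elements pins down $\kappa$ and forces $\kappa(\tfrac12 U) = H_0$. The remaining verifications --- that $\kappa$ is injective (automatic, $\fsl(2,\R)$ is simple), that the triple closes, that the normalisations are consistent --- are routine computations in the root space decomposition using the $C_r$ structure, so I would not grind through them in the writeup.
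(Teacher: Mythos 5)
Your overall strategy matches the paper's: reduce to a single representative $h\in\fa$ by conjugacy (the paper then delegates the explicit construction to \cite[Rem.\ 2.14(b)]{Oeh20}), build an $\fsl(2,\R)$-triple adapted to $h$, and use the Cayley transform to upgrade from an $(H_1)$- to an $(H_2)$-homomorphism. However, your construction of the triple contains a genuine error. You take $e$ in the root space of the single long root $2\varepsilon_r$ and claim that $\varepsilon_r^\vee$ is orthogonal to all $\varepsilon_i-\varepsilon_j$ and hence lies on the ray of $h$. This is false as soon as the real rank $r$ is at least $2$: identifying $\fa$ with $\fa^*$ via the Killing form, $(2\varepsilon_r)^\vee$ is proportional to $\varepsilon_r$, which pairs nontrivially with $\varepsilon_r-\varepsilon_i$ for $i<r$, whereas the $3$-grading element $h$ (with $\varepsilon_j(h)=\tfrac12$ for all $j$) is orthogonal to every $\varepsilon_i-\varepsilon_j$. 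Concretely, $\tfrac12(2\varepsilon_r)^\vee$ acts on the root space of $\varepsilon_r+\varepsilon_i$ with eigenvalue $\tfrac12$ and on that of $\varepsilon_i+\varepsilon_j$ ($i,j<r$) with eigenvalue $0$, so it does not induce the $3$-grading and is not a multiple of $h$. Your single-root $\fsl(2,\R)$ therefore has the wrong semisimple element, and the subsequent Cayley-transform argument is built on it.

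The repair is standard but is exactly the content you skipped: one must use the full Harish--Chandra system of strongly orthogonal long roots $\{2\varepsilon_1,\dots,2\varepsilon_r\}$ and the \emph{diagonal} triple $e=\sum_{j=1}^r e_{2\varepsilon_j}$, $f=\sum_{j=1}^r f_{2\varepsilon_j}$, for which $[e,f]=\sum_j(2\varepsilon_j)^\vee=2h$ by strong orthogonality and $[2h,e]=2e$; this is the embedding whose image meets $\fz(\fk)$ in $\R H_0$ and for which the Cayley element $\exp\bigl(\tfrac{\pi}{4}\ad(e-f)\bigr)$ conjugates $h$ to (a multiple of) $H_0$, yielding the $(H_2)$-property. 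With that substitution your argument goes through and is essentially the one the paper imports from \cite[Rem.\ 2.14(b)]{Oeh20}; as written, though, the key step fails for every tube-type algebra of real rank at least two.
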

\begin{proof}
  As we have already noted at the beginning of Section \ref{sec:herm-liealg}, all elements \(h \in \g\) with \(\spec(\ad h) = \{0, 1, -1\}\) that induce 3-gradings on \(\g\) with via the adjoint representation are conjugate under inner automorphisms, so that it suffices to prove the claim for one such element. Denote the Cartan decomposition determined by \(H_0\) by \(\g = \fk \oplus \fp\) and fix a maximal abelian subspace \(\fa \subset \fp\). Then the arguments in \cite[Rem.\ 2.14(b)]{Oeh20} show that there exists an element \(h \in \fa\) such that \(\g = \g_{-1}(h) \oplus \g_0(h) \oplus \g_1(h)\) and an \((H_2)\)-homomorphism \(\kappa: (\fsl(2,\R), \frac{1}{2}U) \rightarrow (\g, H_0)\) with \(\kappa(\frac{1}{2}H) = h\), which finishes the proof.
\end{proof}

\subsection{Spindler's construction}
\label{sec:lie-adm}

After studying real simple Lie algebras containing invariant non-trivial cones, we now focus on more general finite-dimensional Lie algebras with this property.
For a reductive admissible Lie algebra, every simple ideal is either hermitian or compact (cf.\ e.g.\ \cite[Lem.\ 2.8]{Oeh20}).

Non-reductive admissible Lie algebras can be constructed as follows: Let
\begin{enumerate}
  \item \(\fl\) be a reductive Lie algebra,
  \item \(V\) be an \(\fl\)-module,
  \item \(\fz\) be a vector space, and
  \item \(\beta: V \times V \rightarrow \fz\) be a skew-symmetric bilinear map which is \emph{\(\fl\)-invariant} in the sense that
    \begin{equation}
      \label{eq:sympl-mod-spindler}
      \beta(x.v,w) + \beta(v,x.w) = 0 \quad \text{for all } x \in \fl, v,w \in V.
    \end{equation}
\end{enumerate}
Then \(\g(\fl,V,\fz,\beta) := V \times \fz \times \fl\) with the bracket
\begin{equation*}
  [(v,z,x),(v',z',x')] := (x.v' - x'.v, \beta(v,v'), [x,x'])
\end{equation*}
is a Lie algebra. This is also known as \emph{Spindler's construction} (cf.\ \cite{Sp88}).

\begin{thm}{\rm (Spindler's Construction)}
  \label{thm:spindlers-constr}
  \begin{enumerate}
    \item Let \(\g\) be an admissible Lie algebra. Then the maximal nilpotent ideal \(\fu \subset \g\) satisfies \([\fu,\fu] \subset \fz(\g)\). Let \(\ft \subset \g\) be a compactly embedded Cartan subalgebra. Then there exists a reductive \(\ft\)-invariant subalgebra \(\fl \subset \g\) such that, for \(V := [\ft,\fu]\), the bilinear map
      \[\beta: V \times V \rightarrow \fz(\g), \quad (x,y) \mapsto [x,y],\]
      satisfies \eqref{eq:sympl-mod-spindler} with respect to the action \(x.v := [x,v]\) for \(x \in \fl\) and \(v \in V\). Moreover, \(\ft_\fl := \ft \cap \fl\) acts on \(V\) by semisimple endomorphisms with purely imaginary spectrum such that
      \begin{equation}
        \label{eq:spindler-center-V}
        \fz_V(\ft_\fl) = \{v \in V : \ft_\fl.v = \{0\}\} = \{0\}.
      \end{equation}
      Moreover, we have \(\g \cong \g(\fl,V,\fz(\g),\beta)\).
    \item Conversely, let \(\g = \g(\fl,V,\fz,\beta)\) for some reductive Lie algebra \(\fl\), an \(\fl\)-module \(V\) with \eqref{eq:sympl-mod-spindler}, and a vector space \(\fz\). Let \(\ft_\fl \subset \fl\) be a compactly embedded Cartan subalgebra of \(\fl\). If \(\fz_V(\ft_\fl) = \{0\}\), then the following holds:
      \begin{enumerate}
        \item \(\fz(\g) = \{0\} \times \fz \times \fz_{\fz(\fl)}(V)\).
        \item \(\fu := V + \fz(\g)\) is the maximal nilpotent ideal of \(\g\) and \([\fu,\fu] \subset \fz(\g)\).
        \item \(\ft := \{0\} \times \fz \times \ft_\fl\) is a compactly embedded Cartan subalgebra of \(\g\).
        \item \(\g \cong \fu \rtimes \fl\).
        \item \(\fr := V \times \fz \times \fz(\fl)\) is the radical of \(\g\).
      \end{enumerate}
  \end{enumerate}
\end{thm}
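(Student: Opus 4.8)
The plan is to prove (b) first, then derive (a) from it, since part (a) is essentially the statement that the general admissible Lie algebra is an instance of the construction described in (b). For part (b), assume $\g = \g(\fl, V, \fz, \beta)$ with $\fz_V(\ft_\fl) = \{0\}$. The computations are all direct from the bracket formula $[(v,z,x),(v',z',x')] = (x.v' - x'.v, \beta(v,v'), [x,x'])$; the point is to organize them in the right order so that each statement feeds the next.

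First I would establish (ii): since $[(v,z,0),(v',z',0)] = (0, \beta(v,v'), 0) \in \{0\}\times\fz\times\{0\}$ and $[(v,z,x),(v',z',0)] = (x.v', 0, 0)$, the subspace $\fn := V \times \fz \times \{0\}$ is an ideal with $[\fn,\fn] \subset \{0\}\times\fz\times\{0\}$, hence $\fn$ is two-step nilpotent; moreover $\g/\fn \cong \fl$ is reductive, so $\fn$ contains the nilradical, and conversely $\fn$ is a nilpotent ideal, giving $\fu = \fn = V + (\{0\}\times\fz\times\{0\})$ once we know $\fz(\g) \supset \{0\}\times\fz\times\{0\}$ — but that requires (i), so I should do (i) first. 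For (i): $(v,z,x)$ is central iff $x.v' - x'.v = 0$, $\beta(v,v') = 0$ and $[x,x'] = 0$ for all $(v',z',x')$. Taking $v' = 0$ forces $x.v' $... more carefully: $[x,x']=0$ for all $x'\in\fl$ means $x \in \fz(\fl)$; then $x.v' - x'.v = 0$ for all $v', x'$ — setting $x' = 0$ gives $x.v' = 0$ for all $v'$, i.e. $\fz(\fl)$-component acts trivially, so $x \in \fz_{\fz(\fl)}(V)$; setting $v'=0$ gives $x'.v = 0$ for all $x' \in \fl$, in particular for $x' \in \ft_\fl$, so $v \in \fz_V(\ft_\fl) = \{0\}$; finally $\beta(v,v')=0$ is automatic once $v=0$. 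Conversely every such triple is central. This proves (i), and then (ii) follows as sketched, together with $[\fu,\fu]\subset\fz(\g)$. For (iii), note $\ft := \{0\}\times\fz\times\ft_\fl$ is abelian (as $\ft_\fl$ is abelian and $\beta$ vanishes on... wait, $[(0,z,t),(0,z',t')] = (0, 0, [t,t']) = 0$), it is self-centralizing because $\fz_\g(\ft)$ consists of triples $(v,z,x)$ with $t.v = 0$ for all $t\in\ft_\fl$ (forcing $v=0$ by the hypothesis), $[t,x]=0$ for all $t\in\ft_\fl$ (forcing $x \in \fz_\fl(\ft_\fl) = \ft_\fl$ since $\ft_\fl$ is a Cartan subalgebra of the reductive $\fl$), so $\fz_\g(\ft) = \ft$; and $\ad \ft$ is semisimple with purely imaginary eigenvalues because $\ad\ft_\fl$ is (being a compactly embedded Cartan subalgebra of reductive $\fl$) and the $\fz$-direction acts trivially. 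Items (iv) and (v) are then immediate: $\g = \fu \rtimes \fl$ as vector spaces and the bracket respects the semidirect decomposition; and $\fr := V\times\fz\times\fz(\fl)$ is solvable (it is $(V\times\fz\times\{0\})\rtimes\fz(\fl)$ with abelian quotient and two-step nilpotent kernel), is an ideal, and $\g/\fr \cong \fl/\fz(\fl) = [\fl,\fl]/\fz([\fl,\fl])$ is semisimple, so $\fr$ is the radical.

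For part (a), let $\g$ be admissible with compactly embedded Cartan subalgebra $\ft$. The fact $[\fu,\fu]\subset\fz(\g)$ for the nilradical $\fu$ is \cite[Thm.\ VII.3.10 ff.]{Ne00} (the two-step nilpotency of nilradicals of admissible Lie algebras); I would cite it. Then set $V := [\ft,\fu]$; since $\ad\ft$ is semisimple, $\fu = \fz_\fu(\ft) \oplus [\ft,\fu]$, and one checks $\fz_\fu(\ft) \subset \fz(\g)$ using quasihermiticity of $\g$ (an admissible Lie algebra is quasihermitian, so $\fz_\g(\fz(\fk)) = \fk$, and a $\ft$-fixed vector in the nilradical must be central — this is where admissibility is really used). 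To produce the reductive $\ft$-invariant complement $\fl$: the radical $\fr$ of $\g$ is $\ft$-invariant, and by a Levi decomposition compatible with the $\ad\ft$-action (Mostow / the fact that a compact torus of automorphisms can be made to normalize a Levi subalgebra) one gets a Levi subalgebra $\fs$ normalized by $\ft$; then $\fl := \fs + \fz_\fr(\text{something})$... more precisely one takes $\fl := \fz_\g(\fz(\g)\cap \cdots)$ — rather, the clean statement is that $\fl$ can be taken to be $\fs \oplus \fa$ where $\fa$ is a suitable $\ft$-invariant reductive complement to $V\oplus\fz(\g)$ inside $\fr$; the $\ft$-invariance and the semisimplicity of $\ad\ft_\fl$ on $V$ with $\fz_V(\ft_\fl) = \{0\}$ (which is exactly $\fz_\fu(\ft) \subset \fz(\g)$ rephrased) then let us invoke part (b). The isomorphism $\g \cong \g(\fl, V, \fz(\g), \beta)$ with $\beta = [\cdot,\cdot]|_{V\times V}$ is then the identification of $\g$ with $V \times \fz(\g) \times \fl$ carried by the vector space decomposition $\g = V \oplus \fz(\g) \oplus \fl$, under which the bracket takes exactly the stated form because $[\fu,\fu]\subset\fz(\g)$ and $\fl$ is a subalgebra.

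The main obstacle is the construction in (a) of the reductive, $\ft$-\emph{invariant} complement $\fl$ with $\beta$ landing in $\fz(\g)$ — i.e. showing simultaneously that $[\ft,\fu]$ is a genuine symplectic-type module complement and that a Levi-type reductive piece can be chosen compatibly with the compact torus $\ad\ft$. The purely formal verifications in (b) are routine bracket chases; the content of (a) is the structure theory (existence of compactly embedded Cartan subalgebras, two-step nilpotency of the nilradical, quasihermiticity, and a $\ft$-equivariant Levi decomposition), most of which I would import from \cite{Ne00} and \cite{Sp88} rather than reprove. I expect the write-up to lean on \cite[Ch.\ VII]{Ne00} for the hard inputs and to spend its own effort only on assembling them into the stated normal form.
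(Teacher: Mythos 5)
Your plan differs from the paper's treatment: the paper disposes of the whole theorem by citing \cite[Thm.\ VII.3.10]{Ne00}, \cite[Prop.\ VII.2.23]{Ne00} and \cite[Thm.\ VII.2.26]{Ne00}, whereas you verify (b) by direct bracket computations and sketch (a). That is legitimate in principle, but two steps in your verification of (b) do not go through as written. First, in (ii) you argue that since \(\g/\fn \cong \fl\) is reductive, \(\fn = V \times \fz \times \{0\}\) contains the maximal nilpotent ideal. This implication is false: a reductive Lie algebra has nonzero nilpotent ideals whenever its centre is nonzero (any subspace of \(\fz(\fl)\) is one), so the quotient argument only yields \(\fu \subset V \times \fz \times \fz(\fl)\). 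To pin down the \(\fz(\fl)\)-component you must use that \(\fz(\fl) \subset \ft_\fl\) acts on \(V\) by semisimple operators, so that an element which acts nilpotently on \(V\) (as it must, lying in a nilpotent ideal that contains \(V\)) acts trivially, i.e.\ lies in \(\fz_{\fz(\fl)}(V)\). Note also that your conclusion \(\fu = V \times \fz \times \{0\}\) contradicts the statement you are proving, which asserts \(\fu = V + \fz(\g) = V \times \fz \times \fz_{\fz(\fl)}(V)\); these differ exactly when \(\fz_{\fz(\fl)}(V) \neq \{0\}\), a case that part (b) does not exclude (only Theorem \ref{thm:spindler-adm} adds that hypothesis).

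Second, in (iii) compact embeddedness of \(\ft\) in \(\g\) means that \(e^{\ad_\g \ft}\) generates a relatively compact subgroup of \(\Aut(\g)\), and the action of \(\ft_\fl\) on \(V\) is part of \(\ad_\g \ft\). Your justification (``\(\ad \ft_\fl\) is semisimple with purely imaginary eigenvalues, being a compactly embedded Cartan subalgebra of \(\fl\)'') only controls the action on \(\fl\) itself, not on \(V\); semisimplicity with purely imaginary spectrum of the \(\ft_\fl\)-action on \(V\) is an additional input, and it is precisely what the hypotheses of \cite[Thm.\ VII.2.26]{Ne00} secure (it is also what the repair of (ii) above needs). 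Without it the conditions you actually invoke are satisfied by \(\fl = \R\) acting on \(\R^2\) by \(\mathrm{diag}(1,-1)\) with the standard symplectic \(\beta\), where \(\ft\) is visibly not compactly embedded. For part (a) your sketch correctly isolates the hard point (a \(\ft\)-invariant reductive complement together with \([\fu,\fu] \subset \fz(\g)\)) and, like the paper, defers it to the structure theory in \cite{Ne00}; that portion is acceptable as a citation-level argument, but the two gaps above in (b) need to be closed before the write-up is a proof.
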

\begin{proof}
  Let \(\g\) be an admissible Lie algebra. Then it follows from \cite[Thm.\ VII.3.10]{Ne00} and \cite[Prop.\ VII.2.23]{Ne00} that the maximal nilpotent ideal \(\fu\) of \(\g\) satisfies \([\fu,\fu] \subset \fz(\g)\). The remaining assertions follow from \cite[Thm.\ VII.2.26]{Ne00}.
\end{proof}

\begin{definition}
  \label{def:symp-mod}
  Let \(\g\) be a Lie algebra.
  \begin{enumerate}
    \item Let \(V\) be a \(\g\)-module and let \(\Omega: V \times V \rightarrow \R\) be a symplectic form on \(V\) such that
      \[\Omega(x.v,w) + \Omega(v,x.w) = 0 \quad \text{for all } x \in \g, v,w \in V.\]
      Then \((V,\Omega)\) is called a \emph{symplectic \(\g\)-module}.
    \item Two symplectic \(\g\)-modules \((V,\Omega)\) and \((W,\Omega')\) are said to be \emph{equivalent} if there exists a \(\g\)-module isomorphism \(\varphi : V \rightarrow W\) such that
      \[\Omega(v,w) = \Omega'(\varphi(v),\varphi(w)) \quad \text{for all } v,w \in V.\]
    \item A symplectic \(\g\)-module \((V,\Omega)\) is called a \emph{symplectic \(\g\)-module of convex type} if there exists an element \(x \in \g\) such that the Hamiltonian function
      \[H_x : V \rightarrow \R, \quad H_x(v) := \Omega(x.v,v),\]
      is positive definite.
  \end{enumerate}
\end{definition}

\begin{rem}
  \label{rem:sympmod-conv-defs}
  Note that the term \emph{symplectic \(\g\)-module of convex type} is sometimes defined in a more general manner than in Definition \ref{def:symp-mod} (cf.\ for example \cite{Ne94a},\cite{Neu00}): In these cases, a symplectic \(\g\)-module \((V,\Omega)\) is said to be of convex type if the convex cone
  \begin{equation}
    \label{eq:sympmod-cone}
    W_V := \{x \in \g : (\forall v \in V)\, \Omega(x.v,v) \geq 0\}
  \end{equation}
  is generating in \(\g\). If this is the case, then \(V = V_{\fix} \oplus V_{\eff}\) is an orthogonal direct sum of symplectic \(\g\)-modules, where
  \[V_{\fix} := \{v \in V : \g.v = \{0\}\} \quad \text{and} \quad V_{\eff} := \spann (\g.V),\]
  and \(V_{\eff}\) is a symplectic \(\g\)-module of convex type in the sense of Definition \ref{def:symp-mod} (cf.\ \cite[Prop.\ II.5]{Ne94a}, \cite[Prop.\ II.23]{Ne94a}). Thus, the two definitions are equivalent if \(V_{\fix} = \{0\}\).

  As we will see in Theorem \ref{thm:spindler-adm}, the submodule \(V_{\fix}\) is trivial for all symplectic \(\g\)-modules \((V,\Omega)\) that we consider here.
\end{rem}

The following theorem characterizes those Lie algebras arising from Spindler's Construction which are admissible:

\begin{thm}
  \label{thm:spindler-adm}
  Let \(\fl\) be a reductive Lie algebra, let \(\ft_\fl \subset \fl\) be a compactly embedded Cartan subalgebra, and \(V\) be an \(\fl\)-module with \(\fz_V(\ft_\fl) = \{0\}\) and \(\fz_{\fz(\fl)}(V) = \{0\}\). If \(\fz\) is a vector space and \(\beta: V \times V \rightarrow \fz\) is an \(\fl\)-invariant skew-symmetric map, then the Lie algebra \(\g(\fl, V, \fz, \beta)\) is admissible if and only if
  \begin{enumerate}
    \item \(\fl\) is quasihermitian and
    \item there exists an element \(f \in \fz^*\) such that \((V,f \circ \beta)\) is a symplectic \(\fl\)-module of convex type. In this case, \(f\) is contained in the interior of the generating convex cone
      \[C_{\mathrm{min},\fz}^* := (C_{\mathrm{min}} \cap \fz)^* \subset \fz^*\]
      for some adapted positive root system. 
  \end{enumerate}
\end{thm}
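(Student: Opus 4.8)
The plan is to read off the Cartan data of $\g := \g(\fl, V, \fz, \beta)$ from Theorem \ref{thm:spindlers-constr}(2) and then to invoke the structure and reconstruction theory of admissible Lie algebras from \cite{Ne00}. We identify $\fl, V, \fz$ with their images in $\g$. Under the hypotheses $\fz_V(\ft_\fl) = \{0\}$ and $\fz_{\fz(\fl)}(V) = \{0\}$, Theorem \ref{thm:spindlers-constr}(2) shows that $\fu := V + \fz$ is the maximal nilpotent ideal with $[\fu,\fu] \subset \fz = \fz(\g)$, that $\ft := \fz \times \ft_\fl$ is a compactly embedded Cartan subalgebra of $\g$, and that $\g \cong \fu \rtimes \fl$. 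Since $\fz$ is central and $\fz(\fl)$ acts faithfully on $V$, every root of $(\g_\C, \ft_\C)$ vanishes on $\fz_\C$ and restricts nontrivially to $(\ft_\fl)_\C$; the roots whose root spaces lie in $V_\C$ are the noncompact solvable ones, and for such an $\alpha$ with root vector $Z_\alpha$ the bracket $i[Z_\alpha, \overline{Z_\alpha}] = i\beta(Z_\alpha, \overline{Z_\alpha})$ lies in $\fz$. Hence, for any adapted positive system, $\Cmin \cap \fz = \cone\{i\beta(Z_\alpha, \overline{Z_\alpha}) : \alpha \in \Delta_p^+,\ \g_\C^\alpha \subset V_\C\}$, and this cone determines $C_{\mathrm{min},\fz}^* = (\Cmin \cap \fz)^*$.

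For the implication from admissibility to (a) and (b), let $W \subset \g$ be a pointed generating invariant closed convex cone (available since $\g$ is admissible). Its existence forces $\g$ to be quasihermitian, and restricting an adapted positive system of $(\g_\C, \ft_\C)$ to $(\ft_\fl)_\C$ gives an adapted positive system for $\fl$ (the compact/noncompact dichotomy being preserved, as $\ft_\fl \subset \fk$ and $\fk \cap \fl$ is maximal compactly embedded in $\fl$); this yields (a). For (b), fix the adapted positive system with $\Cmin \subset W \cap \ft \subset \Cmax$ furnished by \cite[Thm.\ VII.3.8]{Ne00}, and choose $f$ in the interior of $C_{\mathrm{min},\fz}^*$, which is nonempty because $W \cap \ft$, and hence $\Cmin \cap \fz$, is pointed; then $f(i\beta(Z_\alpha, \overline{Z_\alpha})) > 0$ for every $V$-root $\alpha \in \Delta_p^+$. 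Decomposing $V$ into $\ft_\fl$-weight spaces (with no zero weight, since $\fz_V(\ft_\fl) = \{0\}$), the $\ft_\fl$-invariance of $\beta$ makes the Hamiltonian $H_{x}(v) = (f \circ \beta)(x.v, v)$ block-diagonal for any $x \in \ft_\fl$, and on the real vectors $Z_\alpha + \overline{Z_\alpha}$ and $i(Z_\alpha - \overline{Z_\alpha})$ it evaluates to a nonzero real multiple of $f(i\beta(Z_\alpha, \overline{Z_\alpha}))$; choosing $x_0 \in \ft_\fl$ in the Weyl chamber on which all these multiples are positive makes $H_{x_0}$ positive definite, and in particular shows that $f \circ \beta$ is a symplectic form on $V$. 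Thus $(V, f \circ \beta)$ is a symplectic $\fl$-module of convex type with $f \in \mathrm{int}(C_{\mathrm{min},\fz}^*)$, which proves (b).

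For the converse, assume (a) and (b). Running the Hamiltonian computation in reverse shows that the functional $f$ witnessing convex type may be taken in $\mathrm{int}(C_{\mathrm{min},\fz}^*)$, which also gives the final assertion of the theorem. Since $\fl$ is quasihermitian, choose an adapted positive system for $\fl$ together with a generating $\cW_{\fk_\fl}$-invariant cone $C_\fl \subset \ft_\fl$ with $\Cmin(\fl) \subset C_\fl \subset \Cmax(\fl)$, and assemble from $C_\fl$ and $f$ a $\cW_\fk$-invariant cone $C \subset \ft = \fz \times \ft_\fl$ with $\Cmin \subset C \subset \Cmax$ for an adapted positive system of $\g$; that $C$ can be chosen pointed uses $f > 0$ on $(\Cmin \cap \fz) \setminus \{0\}$ together with $\fz_V(\ft_\fl) = \{0\}$ and $\fz_{\fz(\fl)}(V) = \{0\}$ (the latter forcing $\fz(\g) = \fz$, so that every central direction is controlled by $f$ and the $\ft_\fl$-weights of $V$). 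By \cite[Thm.\ VIII.3.21]{Ne00}, $C$ is the trace on $\ft$ of a unique generating invariant closed convex cone $W \subset \g$, and $W$ is pointed because $C$ is. Hence $\g$ is admissible.

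The step I expect to be the genuine obstacle is (b) in the forward direction: recovering both the functional $f$ and the positive-definiteness of the associated Hamiltonian from the bare existence of a pointed generating invariant cone. This forces one to compare $W \cap \ft$ — trapped between $\Cmin$ and $\Cmax$ — with the $\ft_\fl$-weight decomposition of $V$ and with the pairing $\beta$, and it is exactly here that the two nondegeneracy hypotheses on $V$ are essential and where one leans hardest on the root- and cone-theoretic machinery of \cite[Ch.\ VII--VIII]{Ne00}. The converse is, by contrast, a comparatively routine application of the reconstruction theorem \cite[Thm.\ VIII.3.21]{Ne00} once the cone $C \subset \ft$ has been built.
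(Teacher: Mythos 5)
The paper does not actually prove this theorem: its ``proof'' consists of the citations [Ne00, Thm.\ VIII.2.7] and [Ne00, Prop.\ VIII.2.9(i)], so your text is an attempt to reconstruct the argument behind those citations. Your strategy (root decomposition with respect to $\ft = \fz \times \ft_\fl$, trapping $W \cap \ft$ between $\Cmin$ and $\Cmax$, the weight-space computation of the Hamiltonian, reconstruction of $W$ from a cone in $\ft$) is indeed the machinery used in [Ne00, Ch.\ VII--VIII], but as written it has a genuine gap in each direction. In the forward direction, your Hamiltonian computation only yields that $H_{x_0}$ is positive \emph{semi}definite: to get definiteness --- equivalently, to see that $f \circ \beta$ is a symplectic form at all --- you need $i[Z_\alpha, \overline{Z_\alpha}] \neq 0$ for every nonzero root vector $Z_\alpha$ in $V_\C$, and you simply assert ``a nonzero real multiple of $f(i\beta(Z_\alpha,\overline{Z_\alpha}))$'' without justifying that this bracket is nonzero. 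That nonvanishing is the ``cone potential'' property of admissible Lie algebras ([Ne00, Thm.\ VII.3.10]); it is the crux of this implication and does not fall out of the weight decomposition. (Relatedly, the identity $\Cmin \cap \fz = \cone\{i\beta(Z_\alpha,\overline{Z_\alpha})\}$ is asserted but not obvious: a cone generated by elements of $\ft_\fl$ and of $\fz$ can meet $\fz$ in more than the subcone spanned by its $\fz$-generators.)

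In the converse, the entire content of the statement is the construction of a \emph{pointed} generating invariant cone, and your proof replaces this with ``assemble from $C_\fl$ and $f$ a $\cW_\fk$-invariant cone $C$ with $\Cmin \subset C \subset \Cmax$.'' This assembly is not carried out: you do not verify that $\g$ itself (as opposed to $\fl$) is quasihermitian, i.e.\ that an adapted positive system for $\Delta(\g_\C,\ft_\C)$ exists at all --- this is exactly where the convex-type hypothesis on $V$ must be used to control the solvable roots --- nor that $\Cmin$ is pointed, nor that $C$ is generating in $\ft$. Moreover, the reconstruction theorem you invoke ([Ne00, Thm.\ VIII.3.21], as quoted in Section \ref{sec:conv-liealg} of this paper) is stated for admissible Lie algebras, so invoking it to \emph{prove} admissibility is circular unless you check the more general hypotheses under which it holds in [Ne00]. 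The argument in [Ne00, Thm.\ VIII.2.7] instead builds the cone directly through the homomorphism $\Phi_f$ into the Jacobi algebra $\hsp(V, f\circ\beta)$ (cf.\ \eqref{eq:spindler-jacobi-hom}) and achieves pointedness by intersecting over enough functionals $f$ together with a pointed cone in $\fl$; some version of that construction, or a fully verified application of the reconstruction theorem, is needed to close this direction.
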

\begin{proof}
  cf.\ \cite[Thm.\ VIII.2.7]{Ne00} and \cite[Prop.\ VIII.2.9(i)]{Ne00}.
\end{proof}

\begin{rem}
  \label{rem:spindler-adm-conv}
  Let \(\g\) be an admissible Lie algebra and let \(\g = \g(\fl,V,\fz(\g),\beta)\) be a description of \(\g\) in terms of Spindler's construction as in Theorem \ref{thm:spindlers-constr}. Then the conditions \(\fz_V(\ft_\fl) = \{0\}\) and \(\fz_{\fz(\fl)}(V) = \{0\}\) of Theorem \ref{thm:spindler-adm} are satisfied.
\end{rem}

\begin{rem}
  \label{rem:spindler-adm-levi-comm}
  Let \(\g = \g(\fl,V,\fz,\beta)\) be an admissible Lie algebra with \(V \neq \{0\}\).
  Denote by \(\fs := [\fl,\fl]\) the semisimple part of the reductive Lie algebra \(\fl\).
  Since the radical of \(\g\) is given by \(\fr = V \times \fz \times \fz(\fl)\) (cf.\ Theorem \ref{thm:spindlers-constr}), the subalgebra \(\fs\) is a Levi complement in \(\g\).
  Moreover, since \(V\) is of convex type, we also have \(V = \spann(\fl.V)\) (cf.\ Remark \ref{rem:sympmod-conv-defs}). Hence, the commutator algebra of \(\g\) is given by \([\g,\g] = V \times [V,V] \times \fs = \g(\fs,V,[V,V],\beta)\).
\end{rem}

We recall a few facts about symplectic Lie algebra modules of convex type:

\begin{thm}
  \label{thm:sympmodconv-decomp}
  Let \(\g\) be a reductive Lie algebra. Let \((V,\Omega)\) be a symplectic \(\g\)-module of convex type and \(\{V_1,\ldots,V_n\}\) be a maximal set of pairwise non-equivalent non-trivial simple submodules. Then the following holds:
  \begin{enumerate}
    \item If \(V\) is a faithful \(\g\)-module, then \(\g\) is quasihermitian.
    \item Let \(\Omega_j := \Omega\lvert_{V_j \times V_j}\) and \(m_j\) denote the multiplicity of \(V_j\) in \(V\). Then \((V,\Omega)\) is, as a symplectic \(\g\)-module, equivalent to the orthogonal direct sum \(\bigoplus_{j=1}^n (V_j, \Omega_j)^{m_j}\). For each simple submodule \((W,\Omega')\) with \(W \cong V_j\), we have \((W,\Omega') \cong (V_j,\Omega_j)\).
    \item Let \(\g = \g_0 \oplus \bigoplus_{k=1}^m \g_k\), where \(\g_0\) is compact and \(\g_k\) is hermitian simple for \(1 \leq k \leq m\). For each \(1 \leq j \leq n\) there exists at most one \(k \in \{1,\ldots,m\}\) such that \(\g_k\) acts non-trivially on \(V_j\).
  \end{enumerate}
\end{thm}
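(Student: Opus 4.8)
The plan is to prove the three parts in order, using throughout the positive‑definite Hamiltonian $H_{x_0}(v)=\Omega(x_0.v,v)$ of a convexifying element $x_0\in\g$, together with the symmetric bilinear form $B_{x_0}(v,w):=\Omega(x_0.v,w)$, which by $\g$‑invariance of $\Omega$ agrees with $H_{x_0}$ on the diagonal and is therefore positive definite. For part (a), suppose $V$ is faithful. Then $x_0$ lies in the interior of the cone $W_V$ of \eqref{eq:sympmod-cone} (since $B_{x_0}>0$), so $W_V$ is generating; and $W_V$ is pointed, because $H_x\equiv 0$ forces $B_x\equiv 0$, hence $x.V=\{0\}$, hence $x=0$ by faithfulness (the same computation gives $V_{\fix}=\{0\}$). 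Being also closed, convex and $\Inn(\g)$‑invariant, $W_V$ witnesses the admissibility of $\g$, and the existence of an adapted positive system, i.e.\ quasihermitianness, then follows from \cite[Thm.\ VII.3.8, Prop.\ VII.2.14]{Ne00}; for reductive $\g$ this says exactly that every simple ideal is compact or hermitian.

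For part (b), the basic observation is that $\Omega$ is nondegenerate on every $\g$‑submodule $W\subset V$: a nonzero $v\in W\cap W^\perp$ (orthogonal complement with respect to $\Omega$) would satisfy $H_{x_0}(v)=\Omega(x_0.v,v)=0$ because $x_0.v\in W$ while $v\perp W$. Splitting off nondegenerate submodules inductively, $V$ is a semisimple $\g$‑module; since $\Omega$ restricted to a $V_j$‑isotypic component $V^{(j)}$ identifies $V^{(j)}$ with its dual, each $V_j$ is self‑dual, and distinct isotypic components are $\Omega$‑orthogonal. It then suffices to treat a single $V^{(j)}$: writing $D_j:=\End_\g(V_j)$ (a real division algebra) and $M_j:=\Hom_\g(V_j,V^{(j)})$, the module $V^{(j)}$ is recovered from $M_j$, and by Schur's lemma $\Omega|_{V^{(j)}}$ corresponds to a nondegenerate $\varepsilon$‑hermitian form on $M_j$ over $D_j$; positivity of $H_{x_0}$ on every (nondegenerate) simple submodule forces this form to be definite. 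An orthogonal $D_j$‑basis of $M_j$ then exhibits $(V^{(j)},\Omega|_{V^{(j)}})$ as an orthogonal direct sum of $m_j$ copies of $(V_j,\Omega_j)$, and a simple submodule $W\cong V_j$ corresponds to a $D_j$‑line in $M_j$ carrying a rank‑one definite form, whence $(W,\Omega|_W)\cong(V_j,\Omega_j)$.

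For part (c), note first that by (b) the pair $(V_j,\Omega_j)$ is again a symplectic module of convex type. Let $\fn_j$ be the kernel of the $\g$‑action on $V_j$ and let $\g^{(j)}$ be its complementary ideal (unique, $\g$ being reductive), so that $\g^{(j)}$ acts faithfully on $V_j$; by the hypothesis on $\g$ we may write $\g^{(j)}=\fk_{\mathrm{cpt}}\oplus\bigoplus_l\g_l$ with $\fk_{\mathrm{cpt}}$ compact reductive and the $\g_l$ the hermitian simple ideals of $\g$ acting nontrivially on $V_j$. Suppose there were two of them, $\g_k$ and $\g_{k'}$. The cone $W_{V_j}\subset\g^{(j)}$ is pointed, generating and invariant, so it meets a compactly embedded Cartan subalgebra $\ft^{(j)}$ in its interior; pick $y_0\in\ft^{(j)}$ with $H_{y_0}>0$ on $V_j$. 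Then $y_0$ acts on $V_{j,\C}$ semisimply with purely imaginary spectrum, and $H_{y_0}>0$ is equivalent to: $0$ is not an eigenvalue, and the hermitian form $\gamma(v,w):=i\,\Omega_\C(v,\bar w)$ is positive‑ (resp.\ negative‑) definite on each $y_0$‑eigenspace with positive (resp.\ negative) imaginary eigenvalue. Now pass to a simple $\g^{(j)}_\C$‑summand $U\cong\bigotimes_l U_l\otimes U_{\mathrm{cpt}}$ of $V_{j,\C}$ (replacing $U$ by $U\oplus\bar U$ if necessary); here $U_k,U_{k'}$ are nontrivial, and by Schur $\gamma$ is, up to a sign, a tensor product of $\g_l$‑invariant hermitian forms $\widehat\gamma_l$ on the $U_l$. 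Since $\g_k$ is noncompact it cannot preserve a definite hermitian form, so $\widehat\gamma_k$ is indefinite, and likewise $\widehat\gamma_{k'}$. Write $y_l$ for the $\g_l$‑component of $y_0$; then $\widehat\gamma_k$ makes the $y_k$‑eigenspaces of $U_k$ mutually orthogonal, so indefiniteness yields $y_k$‑eigenvectors $v_1,v_2$ with $\widehat\gamma_k(v_1,v_1)>0>\widehat\gamma_k(v_2,v_2)$ — and if these lie in a single eigenspace, $\gamma$ is already indefinite there, contradicting $H_{y_0}>0$, so we may assume the eigenvalues $i\alpha_1\ne i\alpha_2$. Similarly obtain $w_1,w_2$ for $y_{k'}$ with eigenvalues $i\beta_1\ne i\beta_2$, and fix a $y_0$‑eigenvector $e_0$ on the remaining factors with $\gamma(e_0,e_0)\ne0$. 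Then the vectors $v_p\otimes w_q\otimes e_0$ ($p,q\in\{1,2\}$) are $y_0$‑eigenvectors with eigenvalue $i(\alpha_p+\beta_q+\gamma_0)$ and $\gamma$‑sign $\sigma\cdot\mathrm{sign}\,\widehat\gamma_k(v_p,v_p)\cdot\mathrm{sign}\,\widehat\gamma_{k'}(w_q,w_q)$ for a fixed $\sigma\in\{\pm1\}$ — a $2\times2$ table of $\gamma$‑signs which, up to the overall sign $\sigma$, is $\pmat{+ & - \\ - & +}$. But no real $\gamma_0$ makes $\mathrm{sign}(\alpha_p+\beta_q+\gamma_0)$ reproduce this table, since $\alpha_p+\beta_q$ is monotone in $p$ and in $q$: e.g.\ with $\alpha_1>\alpha_2$, $\beta_1>\beta_2$ and $\sigma=+$, the requirement $\alpha_1+\beta_2+\gamma_0<0$ forces $\alpha_2+\beta_2+\gamma_0<0$, contradicting the required sign $+$ (the case $\sigma=-$ is analogous with the roles of the indices swapped). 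Hence at most one hermitian simple ideal acts nontrivially on $V_j$, which is the assertion.

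The step I expect to be hardest is part (c): one has to justify the reduction to an elliptic $y_0$ via the cone $W_{V_j}$ and the sign characterization of $H_{y_0}>0$, control the factorization of $\gamma$ along the tensor decomposition of $V_{j,\C}$ (including the real‑versus‑complex subtleties in choosing $U$), and check that the compact ideals and centre of $\g^{(j)}$ — the "remaining factors" — contribute only the harmless fixed sign $\sigma$. Parts (a) and (b) are comparatively routine once the positive‑definiteness of $H_{x_0}$ is exploited.
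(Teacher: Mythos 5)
Your argument is correct in substance, but note that the paper's own proof of this theorem is almost entirely a matter of citation: part (a) is exactly your argument (faithfulness makes $W_V$ pointed, hence $\g$ is admissible and therefore quasihermitian by \cite[Thm.\ VII.3.10]{Ne00}), while (b) is quoted from \cite[Thm.\ A.VI.5]{Ne00} and (c) from \cite[Thm.\ 3.6]{Neu00}. What you do differently is to actually prove (b) and (c). For (b), your route --- nondegeneracy of $\Omega$ on every submodule, isotypic decomposition, and a definite hermitian form on the multiplicity space over $\End_\g(V_j)$ --- is the standard proof of the cited result; the one point to handle with care is why that form is hermitian for the \emph{standard} involution of the division algebra rather than skew-hermitian, which is exactly where Proposition \ref{prop:sympmodconv-endo}(b) (i.e.\ $\End_\g(V_j)^{\#}=\R\id$) enters. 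For (c), your eigenvalue/sign-table argument is essentially the content of Neumann's theorem, and the $2\times 2$ contradiction is airtight (adding the diagonal and the antidiagonal entries of the table yields the contradiction with no case distinction on the ordering of the $\alpha_p,\beta_q$). Three glossed points should be made explicit, though all are repairable by standard arguments: (i) the existence of $y_0$ in a compactly embedded Cartan subalgebra of $\g^{(j)}$ with $H_{y_0}>0$ follows because any $x$ with $H_x>0$ generates a relatively compact one-parameter group on $V_j$, hence is elliptic in $\g^{(j)}$ and lies in such a Cartan subalgebra; (ii) each component $y_l$ of $y_0$ acts semisimply with purely imaginary spectrum on its tensor factor $U_l$ (commuting Jordan decompositions, plus tracelessness of simple Lie algebras to kill the real shift); (iii) the hedge ``replacing $U$ by $U\oplus\bar U$'' is both unnecessary and harmful, since $U\oplus\bar U$ is no longer simple and the tensor factorization of $\gamma$ would fail --- instead, observe that $H_{y_0}>0$ forces $\gamma$ to be definite on every $y_0$-eigenspace, hence nonzero and therefore, by simplicity, nondegenerate on every simple summand $U$ of $V_{j,\C}$. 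In short: your approach buys a self-contained proof; the paper's buys brevity by delegating (b) and (c) to the structure theory it relies on throughout.
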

\begin{proof}
  (a) Since \(V\) is faithful, the generating invariant cone \(W_V\) (cf.\ \eqref{eq:sympmod-cone}) is pointed. Hence, \(\g\) is admissible and therefore quasihermitian by \cite[Thm.\ VII.3.10]{Ne00}. (b) is shown in \cite[Thm.\ A.VI.5]{Ne00}. For the proof of (c), we refer to \cite[Thm.\ 3.6]{Neu00}.
\end{proof}

\begin{lem}
  \label{lem:sympmodconv-simple-herm-ideal}
  Let \(\g\) be a quasihermitian reductive Lie algebra and let \((V,\Omega)\) be a symplectic \(\g\)-module of convex type. Let \(\fh \subset \g\) be a hermitian simple ideal of \(\g\) such that \(\fh\) acts non-trivially on \(V\). Then there exists a decomposition \(V = V_0 \oplus V_1\) of \(V\) into orthogonal \(\fh\)-submodules such that \(\fh.V_0 = \{0\}\) and \((V_1, \Omega\lvert_{V_1 \times V_1})\) is a symplectic \(\fh\)-module of convex type.
\end{lem}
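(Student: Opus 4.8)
The plan is to take $V_0 := \{v \in V : \fh.v = \{0\}\}$ and $V_1 := \spann(\fh.V)$. Since $\fh$ is hermitian simple, hence semisimple, $V$ is a semisimple $\fh$-module and $V = V_0 \oplus V_1$, where $V_1$ is the sum of the non-trivial $\fh$-isotypic components. For $v_0 \in V_0$, $w \in V$ and $x \in \fh$ one has $\Omega(v_0, x.w) = -\Omega(x.v_0, w) = 0$, so $\Omega(V_0, V_1) = \{0\}$ and $\Omega_1 := \Omega|_{V_1 \times V_1}$ is again symplectic. By construction $\fh.V_0 = \{0\}$; moreover $V_1 \neq \{0\}$ and $\fh$ acts faithfully on $V_1$, because $\fh$ is simple and acts non-trivially on $V = V_0 \oplus V_1$. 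Finally, $(V_1, \Omega_1)$ is again a symplectic $\g$-module of convex type: if $x \in \g$ is such that $H_x$ is positive definite on $V$, then its restriction to the $\g$-submodule $V_1$ is positive definite.

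It remains to show that $(V_1, \Omega_1)$ is of convex type as a symplectic $\fh$-module. Since $\fh$ acts faithfully on $V_1$, we have $\{v \in V_1 : \fh.v = \{0\}\} = \{0\}$, so by Remark \ref{rem:sympmod-conv-defs} it suffices to prove that the $\Inn(\fh)$-invariant closed convex cone
\[W := \{x \in \fh : (\forall v \in V_1)\ \Omega_1(x.v, v) \geq 0\}\]
is generating in $\fh$. Let $\fn := \{x \in \g : x.V_1 = \{0\}\}$; this is an ideal of $\g$ with $\fh \cap \fn = \{0\}$, so $\fh$ maps isomorphically onto a hermitian simple ideal $\bar\fh$ of $\bar\g := \g/\fn$, and $(V_1, \Omega_1)$ becomes a faithful symplectic $\bar\g$-module of convex type. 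Therefore the cone $\bar W := \{x \in \bar\g : (\forall v \in V_1)\ \Omega_1(x.v, v) \geq 0\}$ is generating (by convex type) and pointed (by faithfulness, as in the proof of Theorem \ref{thm:sympmodconv-decomp}(a)), so $\bar\g$ is admissible. By \cite[Thm.\ VII.3.8]{Ne00} there are an adapted positive system of $\bar\g$ and a compactly embedded Cartan subalgebra $\bar\ft$ with $\Cmin \subseteq \bar W \cap \bar\ft$. Restricting this positive system to the ideal $\bar\fh$, one sees that $\Cmin(\bar\fh) \subseteq \Cmin(\bar\g) \subseteq \bar W$, hence $\{0\} \neq \Cmin(\bar\fh) \subseteq \bar W \cap \bar\fh$. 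Since $\bar W \cap \bar\fh$ is a non-zero closed convex $\Inn(\bar\fh)$-invariant cone in the simple Lie algebra $\bar\fh$, its linear span is a non-zero ideal of $\bar\fh$, hence all of $\bar\fh$; a convex cone spanning its ambient space has non-empty interior, so $\bar W \cap \bar\fh$ is generating in $\bar\fh$. Transporting back along $\fh \cong \bar\fh$ shows that $W$ is generating in $\fh$, as required.

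The main obstacle is this last step. A symplectic module of convex type over $\g$ need not remain of convex type after restriction to an arbitrary subalgebra, so one genuinely has to exploit that $\fh$ is an \emph{ideal}; the key point is that the minimal invariant cone of a hermitian simple ideal is contained in the minimal invariant cone of the ambient admissible Lie algebra. The preliminary reduction to the faithful case (passing from $\g$ to $\g/\fn$) is needed precisely so that the relevant cone $\bar W$ is pointed, which is what allows the $\Cmin$-estimate of \cite[Thm.\ VII.3.8]{Ne00} to be applied.
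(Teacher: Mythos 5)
Your proof is correct, and it uses the same decomposition as the paper ($V_0 = V_{\fix}$, $V_1 = \spann(\fh.V)$ with respect to the $\fh$-action, followed by Remark \ref{rem:sympmod-conv-defs}), but it differs in how the key point is handled. The paper disposes of the whole lemma in two lines by citing \cite[Prop.\ 3.5(1)]{Neu00} for the fact that $W_V \cap \fh$ generates $\fh$ --- note that your cone $W$ is exactly $W_V \cap \fh$, since $\fh.V_0 = \{0\}$ forces $\Omega(x.v,v) = \Omega(x.v_1,v_1)$ for $x \in \fh$ --- whereas you reprove that fact from scratch: pass to the faithful quotient $\g/\fn$ so that the relevant invariant cone is pointed, apply \cite[Thm.\ VII.3.8]{Ne00} to sandwich it above $\Cmin$, and use that the minimal cone of a hermitian simple \emph{ideal} sits inside the minimal cone of the ambient admissible algebra, so that $W \cap \fh$ is a non-zero invariant cone in a simple algebra and hence generating. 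Your route is longer but self-contained and makes visible exactly where the ideal hypothesis enters (a general subalgebra would fail precisely at the inclusion $\Cmin(\fh) \subseteq \Cmin(\g)$); the paper's route buys brevity at the cost of an external reference. All the supporting steps you supply (the $\Omega$-orthogonality of $V_0$ and $V_1$, faithfulness of the $\fh$-action on $V_1$, pointedness of the quotient cone, and the span of a non-zero invariant cone in a simple algebra being the whole algebra) check out.
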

\begin{proof}
  By \cite[Prop.\ 3.5(1)]{Neu00}, the cone \(W_V \cap \fh\) (cf.\ \eqref{eq:sympmod-cone}) generates \(\fh\). Setting \(V_0 := V_{\mathrm{fix}}\) with respect to the action of \(\fh\) on \(V\) and \(V_1 := \spann(\fh.V)\), the claim follows from Remark \ref{rem:sympmod-conv-defs}.
\end{proof}

\begin{lem}
  \label{lem:sympmodconv-submodules}
  Let \(\g\) be a Lie algebra and let \((V,\Omega)\) be a symplectic \(\g\)-module of convex type. Then every \(\g\)-submodule \(W \subset V\) becomes a symplectic \(\g\)-module of convex type when endowed with the restriction \(\Omega\lvert_{W \times W}\).
\end{lem}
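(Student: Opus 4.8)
The plan is to use the single element $x \in \g$ that witnesses the convex type of $(V,\Omega)$ and to show that the same $x$ does all the work for $W$. By Definition \ref{def:symp-mod}(c), fix $x \in \g$ such that $H_x(v) = \Omega(x.v,v)$ is positive definite on $V$.

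The first (and only non-formal) step is to check that the restriction $\Omega\lvert_{W \times W}$ is non-degenerate, so that $W$ really is a symplectic subspace; this is the point where the convex-type hypothesis enters. Suppose $w \in W$ lies in the radical of $\Omega\lvert_{W\times W}$, i.e.\ $\Omega(w,w') = 0$ for all $w' \in W$. Since $W$ is a $\g$-submodule we have $x.w \in W$, so in particular $\Omega(w, x.w) = 0$. By skew-symmetry of $\Omega$ this gives $H_x(w) = \Omega(x.w,w) = -\Omega(w,x.w) = 0$, and positive definiteness of $H_x$ forces $w = 0$. Hence $\Omega\lvert_{W\times W}$ is a symplectic form on $W$.

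It remains to observe that $(W, \Omega\lvert_{W\times W})$ is a symplectic $\g$-module of convex type. The invariance identity $\Omega(y.v,u) + \Omega(v,y.u) = 0$ for $y \in \g$ holds on all of $V$, hence a fortiori for $v,u \in W$; together with the non-degeneracy just established, this shows $(W,\Omega\lvert_{W\times W})$ is a symplectic $\g$-module. Finally, since $x.W \subset W$, the Hamiltonian function of $W$ associated with $x$ is $w \mapsto \Omega(x.w,w) = H_x(w)$, which is the restriction of a positive definite form and is therefore positive definite on $W$. By Definition \ref{def:symp-mod}(c), $(W,\Omega\lvert_{W\times W})$ is of convex type, completing the proof.

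I do not expect any genuine obstacle here: the argument is a short verification, and the only thing one must be careful about is to establish non-degeneracy of the restricted form \emph{before} speaking of a symplectic submodule, which is exactly what the convex-type element $x$ provides.
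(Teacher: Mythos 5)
Your proof is correct and complete. The paper itself gives no argument here, deferring entirely to \cite[Lem.\ A.VI.3]{Ne00}; your direct verification supplies the standard reasoning, and you correctly identify the one genuine point, namely that non-degeneracy of \(\Omega\lvert_{W\times W}\) is not automatic for a subspace and must be extracted from the positive definiteness of \(H_x\) together with the invariance \(x.W\subset W\). Nothing is missing.
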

\begin{proof}
  cf. \cite[Lem.\ A.VI.3]{Ne00}.
\end{proof}

\begin{example}
  \label{ex:sympl-mod-conv-sl2}
  Since it will be important later, we discuss the symplectic modules of convex type of the hermitian simple Lie algebra \(\g := \fsl(2,\R)\). As all symplectic \(\g\)-modules \((V,\Omega)\) of convex type are semisimple (cf.\ Theorem \ref{thm:sympmodconv-decomp}), we assume that \(V\) is simple. Then the classification of symplectic modules of convex type of simple Lie algebras (cf.\ \cite[Thm.\ III.15]{Ne94a}) shows that, as a \(\g\)-module, \(V\) is equivalent to the standard representation of \(\fsl(2,\R) \cong \sp(2,\R)\) on \(\R^2\). The symplectic form \(\Omega\) on \(V\) which turns \(V\) into a symplectic module is unique up to equivalence and the choice of sign.

  Suppose now that \(V := \R^2\) is endowed with the standard representation of \(\g\) and let \(\Omega\) denote the standard symplectic form on \(V\). Let \((W,\tilde \Omega)\) be a symplectic \(\g\)-module of convex type. By Theorem \ref{thm:sympmodconv-decomp}, the module \((W,\tilde \Omega)\) is then equivalent to the direct sum \(\bigoplus_{j=1}^n (V, \Omega)\) for some \(n \in \N\). In particular, the signs of the symplectic forms on each submodule are coupled. The corresponding homomorphism \(\rho: \fsl(2,\R) \rightarrow \sp(2n,\R)\) is up to equivalence determined by
  \[\rho(H) = \pmat{\bbone_n & 0 \\ 0 & -\bbone_n} \quad \text{and} \quad \rho(U) = \pmat{0 & \bbone_n \\ -\bbone_n & 0}.\]
  Note that \(\rho\) is an \((H_2)\)-homomorphism with respect to \(\frac{1}{2}U\) and \(\frac{1}{2}\rho(U)\).
\end{example}

Let \((V,\Omega)\) be a symplectic vector space. For \(X \in \End(V)\), we define \(X^\# \in \End(V)\) as the endomorphism on \(V\) determined by
\[\Omega(Xv,w) = \Omega(v,X^\# w) \quad \text{for all } v,w \in V.\]
The map \(X \mapsto X^\#\) is an involutive anti-automorphism on the algebra \(\End(V)\).

\begin{prop}
  \label{prop:sympmodconv-endo}
  Let \(\g\) be a Lie algebra and let \((V,\Omega)\) be a symplectic \(\g\)-module.
  \begin{enumerate}
    \item The algebra \(\End_\g(V)\) of \(\g\)-endomorphisms of \(V\) is \(\#\)-invariant.
    \item Suppose that \(V\) is simple and of convex type. Then \(\End_\g(V)\) is isomorphic to \(\R\), \(\C\), or \(\H\) and \(\#\) acts trivially on \(\End_\g(V)\) if \(\End_\g(V) = \R \id_V\) and by the natural conjugation on \(\C\), respectively \(\H\), if \(\End_\g(V) \cong \C\), respectively \(\End_\g(V) \cong \H\). In particular, we have \(\End_\g(V)^\# = \R \id_V\).
  \end{enumerate}
\end{prop}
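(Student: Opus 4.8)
The plan is to reduce everything to the structure theory of finite-dimensional division algebras over $\R$ equipped with an involution. For part (a), let $\varphi \in \End_\g(V)$ and $x \in \g$. For all $v,w \in V$ we compute, using the $\g$-equivariance of $\varphi$ and the invariance property $\Omega(x.u, u') + \Omega(u, x.u') = 0$ of the symplectic $\g$-module:
\[
\Omega(x.(\varphi^\# v), w) = -\Omega(\varphi^\# v, x.w) = -\Omega(v, \varphi(x.w)) = -\Omega(v, x.(\varphi w)) = \Omega(x.v, \varphi w) = \Omega(\varphi^\#(x.v), w).
\]
Since $\Omega$ is nondegenerate this gives $x.(\varphi^\# v) = \varphi^\#(x.v)$, i.e.\ $\varphi^\# \in \End_\g(V)$; hence $\End_\g(V)$ is $\#$-invariant. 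This is the easy part and should be essentially a one-line calculation.

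For part (b), first invoke the hypothesis that $V$ is simple: by Schur's lemma over $\R$, the associative division algebra $\End_\g(V)$ is isomorphic to $\R$, $\C$, or $\H$. The restriction $\#|_{\End_\g(V)}$ is, by part (a), an $\R$-linear involutive anti-automorphism of this division algebra. On $\R$ the only such map is the identity. On $\C$ and on $\H$ one lists the $\R$-linear involutive anti-automorphisms: for $\C$ these are $\mathrm{id}$ and complex conjugation; for $\H$ the anti-automorphisms are $q \mapsto u \bar q u^{-1}$ for a unit $u$ (standard conjugation composed with an inner automorphism), and the involutive ones fall into two Weyl-type families — the standard conjugation (with fixed space $\R\cdot 1$), and the "reversion''-type involutions with $3$-dimensional fixed space. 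So the content of the claim is that only the conjugation-type involution occurs, i.e.\ that $\#|_{\End_\g(V)}$ has $1$-dimensional fixed space $\R\,\id_V$. The main obstacle is precisely ruling out the "wrong'' involution in the $\C$ and $\H$ cases, and this is where convex type must enter.

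To exploit convex type, choose $x_0 \in \g$ with $H_{x_0}(v) = \Omega(x_0.v, v)$ positive definite. Suppose $\varphi \in \End_\g(V)$ satisfies $\varphi^\# = \varphi$ and $\varphi \neq c\,\id_V$ for a scalar; I would derive a contradiction by building from $\varphi$ a new $\g$-endomorphism that violates positivity. Concretely, consider the bilinear form $b(v,w) := \Omega(x_0.v, \varphi w)$. Using that $x_0.(-)$ is $\Omega$-skew, that $\varphi$ commutes with $x_0.(-)$ (part (a) applied, since $\varphi \in \End_\g(V)$), and that $\varphi^\# = \varphi$, one checks $b$ is symmetric: $b(v,w) = \Omega(x_0.v,\varphi w) = \Omega(\varphi x_0.v, w) = \Omega(x_0.\varphi v, w) = -\Omega(\varphi v, x_0.w) = -\Omega(v,\varphi x_0.w) = -\Omega(v, x_0.\varphi w) = \Omega(x_0.v,\varphi w)$ — wait, I would instead argue via $b(v,w) = \Omega(x_0 \varphi v, w)$ and note $b(w,v) = \Omega(x_0 \varphi w, v) = -\Omega(v, x_0\varphi w) = -\Omega(v,\varphi x_0 w) = -\Omega(\varphi v, x_0 w) = \Omega(x_0 \varphi v, w) = b(v,w)$. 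Then $b$ is a $\g$-invariant symmetric form, and $H_{x_0} = b(\cdot,\cdot)$ when $\varphi = \id$. The real self-adjoint elements of $\End_\g(V)$ act on the Euclidean space $(V, H_{x_0})$ in a way compatible with $b$; in the $\C$ case the non-trivial involution has a self-adjoint element $\varphi$ (namely $i \cdot \id$ becomes self-adjoint) whose square is $-\id$, forcing the associated form $b(v,v) = H_{x_0}(\varphi v, v)$ to be indefinite — but the cone condition forces, after averaging over the $\End_\g(V)$-action or using that all signs of the simple summands are coupled (Example \ref{ex:sympl-mod-conv-sl2}, Theorem \ref{thm:sympmodconv-decomp}), that every such invariant form is semidefinite, a contradiction. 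The same argument, applied to the quaternionic units that become self-adjoint under the reversion-type involution, kills that case. Thus $\End_\g(V)^\# = \R\,\id_V$, and reading off the fixed spaces identifies $\#$ as trivial, conjugation on $\C$, or conjugation on $\H$ respectively.

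Alternatively — and this is probably the cleaner route to write up — I would cite that a simple symplectic $\g$-module of convex type carries, by definition of convex type together with the averaging in Remark \ref{rem:sympmod-conv-defs}, a compatible positive complex (respectively quaternionic) structure making $(V, \Omega, J)$ into a Hermitian module, so that $\End_\g(V)$ with $\#$ is exactly the algebra of endomorphisms commuting with a positive-definite Hermitian form; the anti-involution $\#$ is then forced to be the adjoint with respect to that form, whose fixed-point algebra in a division algebra is $\R\,\id_V$. The key external input is the classification of simple symplectic modules of convex type (\cite[Thm.\ III.15]{Ne94a}) together with \cite[Thm.\ A.VI.5]{Ne00}, which pin down $V$ up to equivalence and guarantee the existence of the compatible positive form; given that, the determination of $\#$ is purely algebraic. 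The hard part, in either approach, remains the verification that the "exotic'' anti-involution is incompatible with a $\g$-invariant positive-definite form, which ultimately rests on the coupling of signs across the isotypic components established in Theorem \ref{thm:sympmodconv-decomp}(b).
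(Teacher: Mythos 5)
Part (a) of your proposal is correct and is essentially the paper's own computation. For part (b), the Schur's Lemma step also matches the paper; beyond that the paper simply cites \cite[Lem.\ II.7]{Ne94a}, whereas you attempt a self-contained argument, so the question is whether your argument closes. Your overall strategy is the right one: reduce to showing that no \(\varphi\in\End_\g(V)\) with \(\varphi^\#=\varphi\) and \(\varphi^2=-\id_V\) can exist (any \(\#\)-fixed element outside \(\R\id_V\) generates a copy of \(\C\) in the division algebra and can be normalized to such a \(\varphi\); the list of involutive anti-automorphisms of \(\R,\C,\H\) and their fixed spaces then does the rest, as you say).

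The decisive step, however, has a genuine gap. First, the form \(b(v,w)=\Omega(x_0.v,\varphi w)\) is \emph{not} \(\g\)-invariant: one computes \(b(x.v,w)+b(v,x.w)=\Omega([x_0,x].v,\varphi w)\), which need not vanish. Second, even granting invariance, the assertion that ``the cone condition forces every such invariant form to be semidefinite'' is established nowhere --- Theorem \ref{thm:sympmodconv-decomp}(b) couples the signs of the \emph{symplectic} forms on isotypic components and says nothing about symmetric forms --- and ``averaging over the \(\End_\g(V)\)-action'' is not an argument. The repair is short and uses only the positive definiteness of \(H_{x_0}\): since \(\varphi\in\End_\g(V)\) and \(\varphi^\#=\varphi\),
\[
H_{x_0}(\varphi v)=\Omega(x_0.\varphi v,\varphi v)=\Omega(\varphi(x_0.v),\varphi v)=\Omega(x_0.v,\varphi^\#\varphi v)=\Omega(x_0.v,\varphi^2v)=-H_{x_0}(v)<0
\]
for \(v\neq 0\), a contradiction. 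Equivalently, the symmetry of your \(b\) says exactly that \(\varphi\) is self-adjoint for the positive definite form \(\Omega(x_0.\cdot\,,\cdot)\), hence has real spectrum, which is incompatible with \(\varphi^2=-\id_V\). Your alternative route via a compatible positive complex structure has the same defect in sharper form: the claim that \(\#\) coincides with the adjoint for the associated Hermitian form already presupposes the commutation relations you are trying to establish, so it cannot serve as the proof.
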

\begin{proof}
  (a) If \(A \in \End_\g(V)\), then we have for all \(x \in \g\) and \(v,w \in V\) that
  \[\Omega(v,A^\# x.w) = \Omega(Av,x.w) = -\Omega(x.(Av),w) = -\Omega(Ax.v,w) = \Omega(v,x.(A^\#)w),\]
  so that \(A^\# \in \End_\g(V)\).

  (b) That \(\End_\g(V)\) is isomorphic to \(\R\), \(\C\), or \(\H\) follows from Schur's Lemma. For the proof of the second statement, we refer to \cite[Lem.\ II.7]{Ne94a}.
\end{proof}

\begin{lem}
  \label{lem:sympmodconv-spec-skewsym-end}
  Let \(\g\) be a Lie algebra and let \((V,\Omega)\) be a symplectic \(\g\)-module of convex type. Then \(\spec(X) \subset i\R\) for each \(X \in \End_\g(V)^{-\#}\).
\end{lem}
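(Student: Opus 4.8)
The plan is to use the convex-type hypothesis to build a genuine (positive-definite) inner product on $V$ with respect to which every $X \in \End_\g(V)^{-\#}$ becomes skew-symmetric; the assertion $\spec(X) \subset i\R$ is then immediate from elementary linear algebra.

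First I would invoke Definition \ref{def:symp-mod}(c) to fix an element $x \in \g$ for which the Hamiltonian function $H_x(v) = \Omega(x.v,v)$ is positive definite, and write $J \in \End(V)$ for the operator $Jv := x.v$. Because $(V,\Omega)$ is a symplectic $\g$-module, the invariance identity $\Omega(x.v,w) + \Omega(v,x.w) = 0$ says exactly that $J^\# = -J$. Next I would introduce the bilinear form $g(v,w) := \Omega(Jv,w)$ on $V$. Using $J^\# = -J$ and the antisymmetry of $\Omega$ one checks $g(w,v) = \Omega(Jw,v) = \Omega(w,J^\# v) = -\Omega(w,Jv) = \Omega(Jv,w) = g(v,w)$, so $g$ is symmetric, and $g(v,v) = H_x(v) > 0$ for $v \neq 0$, so $g$ is an inner product on the finite-dimensional real vector space $V$.

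Now let $X \in \End_\g(V)^{-\#}$. Since $X$ is a $\g$-endomorphism it commutes with $J = x.(-)$, and since $X^\# = -X$ we compute
\[ g(Xv,w) = \Omega(JXv,w) = \Omega(XJv,w) = \Omega(Jv,X^\# w) = -\Omega(Jv,Xw) = -g(v,Xw), \]
i.e.\ $X$ is skew-symmetric with respect to $g$. Choosing an orthonormal basis for $g$ represents $X$ by a real skew-symmetric matrix, whose eigenvalues are purely imaginary; hence $\spec(X) \subset i\R$.

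There is no substantial obstacle here: the only points requiring care are the verifications that $g$ is symmetric and positive definite and that $X$ commutes with $J$, each of which is a one-line computation resting on the defining identities of the $\#$-operation, of $\End_\g(V)$, and of the symplectic $\g$-module structure, together with the convex-type assumption that supplies the positive-definite $H_x$.
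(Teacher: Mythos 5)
Your proof is correct, and it takes a genuinely different route from the one in the paper. The paper argues topologically: it uses the positive definite Hamiltonian \(H_y\) as a proper function whose level sets are compact, notes that \(H_y\) is invariant under \(e^{tX}\) (which uses both \(X \in \End_\g(V)\) and \(X^\# = -X\)), concludes that every orbit \(e^{\R X}v\) is bounded, invokes the principle of uniform boundedness to get that \(e^{\R X}\) is a relatively compact subgroup of \(\GL(V)\), and then reads off \(\spec(X) \subset i\R\). You instead make the underlying mechanism explicit: the form \(g(v,w) = \Omega(Jv,w)\) with \(Jv = x.v\) is a genuine inner product by the convex-type hypothesis, and \(X\) is \(g\)-skew because it commutes with \(J\) and satisfies \(X^\# = -X\) — exactly the same two properties the paper uses, but deployed in a purely linear-algebraic way. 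Your argument is more elementary (no uniform boundedness, no compactness), and in fact it explains the paper's conclusion, since it shows \(e^{\R X}\) sits inside the orthogonal group of \(g\), which is the compact group the paper detects indirectly. The only hypotheses you rely on beyond the paper's are none; finite-dimensionality of \(V\), which both arguments need, is implicit throughout the paper. One tiny presentational point: it is worth stating explicitly that \(X\) commutes with \(J\) precisely because \(J\) is the action of the fixed element \(x \in \g\) and \(X\) is a \(\g\)-module endomorphism — you do say this, so nothing is missing.
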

\begin{proof}
  Let \(X \in \End_\g(V)^{-\#} = \End_\g(V) \cap \sp(V,\Omega)\). Since \((V,\Omega)\) is of convex type, there exists \(y \in \g\) such that the Hamiltonian function
  \[H_y : V \rightarrow \R, \quad H_y(v) := \Omega(y.v,v),\]
  is positive definite. In particular, we have \(\lim_{\|v\| \rightarrow \infty} H_y(v) = \infty\), so that the level sets of \(H_y\) are bounded and hence compact. Let \(v \in V\). Then \(H_y(e^{tX}v) = H_y(v)\) for all \(t \in \R\) implies that the set \(e^{\R X}v\) is bounded. By the principle of uniform boundedness, this means that \(e^{\R X}\) is bounded, so that \(\oline{e^{\R X}}\) is compact in \(\GL(V)\). In particular, we have \(\spec(X) \subset i\R\).
\end{proof}

\begin{example}
  \label{ex:jacobi-alg}
  The following is a prototypical example of an admissible Lie algebra: Let \((V,\Omega)\) be a symplectic vector space. Then \(V\) is a symplectic \(\sp(V,\Omega)\)-module of convex type with the canonical action of \(\sp(V,\Omega)\) on \(V\).
  Using Spindler's construction, we define the Jacobi algebra by
  \[\hsp(V,\Omega) := \g(\sp(V,\Omega),V,\R,\Omega).\]
  Its radical can be identified with the Heisenberg algebra \(\heis(V,\Omega) := V \times \R\) with the bracket
  \[[(v,z),(v',z')] := (0,\Omega(v,v')).\]
  The Jacobi algebra can be identified with the Lie algebra \(\Pol_{\leq 2}(V)\) of polynomials of degree at most \(2\) endowed with the Poisson bracket via
  \begin{equation*}
    \varphi : \hsp(V,\Omega) \rightarrow \Pol_{\leq 2}(V), \quad \varphi(w,z,x)(v) := \frac{1}{2}\Omega(xv,v) + \Omega(w,v) + z,
  \end{equation*}
  (cf.\ \cite[Prop.\ A.IV.15]{Ne00}). The pointed invariant closed convex cone
  \begin{equation*}
    \hsp(V,\Omega)_+ := \{x \in \hsp(V,\Omega) : \varphi(x) \geq 0\}
  \end{equation*}
  generates \(\hsp(V,\Omega)\).
  For every \(H\)-element \(J \in \hsp(V,\Omega)_+\) of \(\sp(V,\Omega)\) and every constant \(c > 0\), the element \(J + c\) is an interior point of \(\hsp(V,\Omega)_+\) (cf.\ \cite[Ex.\ VIII.2.3]{Ne00}).
\end{example}

We can easily define an analog of the Jacobi algebra in Example \ref{ex:jacobi-alg} with larger center: For real vector spaces \(V\) and \(\fz\) and a skew-symmetric bilinear map \(\beta: V \times V \rightarrow \fz\), we define the \emph{symplectic Lie algebra on \((V,\beta)\)} by
\[\sp(V,\beta) := \{x \in \End(V) : (\forall v,w \in V)\, \beta(xv, w) + \beta(v, xw) = 0\}.\]
We now define the \emph{generalized Jacobi algebra} \(\hsp(V,\beta) := \g(\sp(V,\beta), V, \fz, \beta)\) using Spindler's construction and the natural action of \(\sp(V,\beta)\) on \(V\).

\begin{rem}
  \label{rem:gen-symp-alg-char}
  There is an alternative way to characterize the symplectic Lie algebra \(\sp(V,\beta)\) for \(\beta: V \times V \rightarrow \fz\): Suppose that there exists \(f \in \fz^*\) such that \(\Omega := f \circ \beta\) is non-degenerate, i.e.\ a symplectic form of \(V\). Then, for every \(g \in \fz^*\), there exists a unique linear map \(\Psi_g \in \End(V)\) such that
  \[(g \circ \beta)(v,w) = \Omega(\Psi_g v,w) \quad \text{for all } v,w \in V.\]
  The maps \(\Psi_g\) are symmetric with respect to \(\Omega\) and we have \([\Psi_g, \sp(V,\beta)] = \{0\}\) by \cite[Lem.\ 5.9]{Neu00}.

  Let \(D \subset \fz^*\) be a generating subset of \(\fz^*\). Then we even have
  \[\sp(V,\beta) = \{x \in \sp(V,\Omega) : (\forall g \in D) [x,\Psi_g] = 0\}.\]
  The arguments above show that the inclusion ``\(\subset\)'' holds. Conversely, suppose that \(x \in \sp(V,\Omega)\) commutes with \(\Psi_g\) for all \(g \in D\). Then we have
  \[(g \circ \beta)(xv,w) = \Omega(\Psi_g xv,w) = \Omega(x \Psi_g v,w) = -\Omega(\Psi_g v, xw) = -(g \circ \beta)(v, xw)\]
  for all \(v,w \in V\). Since \(D\) is generating in \(\fz^*\), this shows that \(x \in \sp(V,\beta)\).
\end{rem}

The following theorem shows that, for every admissible Lie algebra \(\g = \g(\fl, V, \fz, \beta)\), the corresponding generalized Jacobi algebra \(\hsp(V,\beta)\) is also admissible. It is essentially a reformulation of \cite[Thm.\ V.10]{Ne94a} and the arguments in \cite[Ch.\ III \S 6]{Sa80}.

\begin{thm}
  \label{thm:adm-gen-jacobi-alg}
  Let \(\g = \g(\fl,V,\fz,\beta)\) be an admissible Lie algebra. Then the following holds:
  \begin{enumerate}
    \item The Lie algebra \(\sp(V,\beta)\) is reductive and admissible.
    \item Let \(f \in \fz^*\) such that, for \(\Omega := f \circ \beta\), the space \((V,\Omega)\) is a symplectic \(\fl\)-module of convex type. Then the inclusion \(\sp(V,\beta) \hookrightarrow \sp(V,\Omega)\) is an \((H_2)\)-homomorphism.
    \item The Lie algebra \(\hsp(V,\beta) = \g(\sp(V,\beta), V, \fz, \beta)\), where \(\sp(V,\beta)\) acts canonically on \(V\), is admissible.
  \end{enumerate}
\end{thm}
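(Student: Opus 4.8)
The plan is to recognise parts (a) and (b) as a reformulation of \cite[Thm.\ V.10]{Ne94a}, using the concrete description of \(\sp(V,\beta)\) from Remark \ref{rem:gen-symp-alg-char}, and then to deduce (c) from (a) by checking the hypotheses of the admissibility criterion Theorem \ref{thm:spindler-adm}. First I would fix, using Theorem \ref{thm:spindler-adm}, a functional \(f \in \fz^*\) such that \(\Omega := f \circ \beta\) is a symplectic form making \((V,\Omega)\) a symplectic \(\fl\)-module of convex type, together with an element \(y \in \fl\) with \(H_y\) positive definite. Writing \(\#\) for the \(\Omega\)-adjoint on \(\End(V)\), Remark \ref{rem:gen-symp-alg-char} provides a generating subset \(D \subset \fz^*\) with
\[\sp(V,\beta) = \{x \in \sp(V,\Omega) : [x,\Psi_g] = 0 \text{ for all } g \in D\} = \End_B(V) \cap \sp(V,\Omega),\]
where \(\Psi_g \in \End(V)\) is the \(\#\)-symmetric operator with \(g \circ \beta = \Omega(\Psi_g\,\cdot\,,\,\cdot\,)\) and \(B\) is the unital subalgebra of \(\End(V)\) generated by \(\{\Psi_g : g \in D\}\). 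Since \(\fl\) preserves \(\beta\), the representation \(\fl \to \End(V)\) maps into \(\sp(V,\beta)\); in particular \(y \in \sp(V,\beta)\), and each \(\Psi_g\) commutes with the image of \(\fl\), so that \(B \subset \End_\fl(V)\).

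For (a): the module \(V\) is semisimple over \(\fl\) by Theorem \ref{thm:sympmodconv-decomp}(b), so \(\End_\fl(V)\) is a semisimple associative algebra, and by Proposition \ref{prop:sympmodconv-endo}(b) together with the orthogonal decomposition of Theorem \ref{thm:sympmodconv-decomp}(b) the involution \(\#\) restricts to \(\End_\fl(V)\) and identifies it with a finite-dimensional direct sum of matrix algebras over \(\R,\C,\H\) carrying the standard conjugate-transpose involution, i.e.\ a finite-dimensional real \(C^*\)-algebra. The subalgebra \(B\) is \(\#\)-invariant, since its generators are \(\#\)-fixed and \(\#\) is an anti-automorphism; hence \(B\) is a \(C^*\)-subalgebra, in particular semisimple, so its commutant \(\End_B(V)\) is semisimple and is again \(\#\)-invariant. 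Therefore \(\sp(V,\beta)\) is the space of \(\#\)-skew elements of a semisimple associative \(*\)-algebra, hence a reductive Lie algebra. It is admissible because \((V,\Omega)\) is a faithful symplectic \(\sp(V,\beta)\)-module of convex type — of convex type already over \(\fl \subset \sp(V,\beta)\) — with \(\fz_V(\ft_\fl) = \{0\}\) forcing it to have no nonzero \(\sp(V,\beta)\)-fixed vector; by Remark \ref{rem:sympmod-conv-defs} the cone \(W_V\) of \eqref{eq:sympmod-cone} is then generating in \(\sp(V,\beta)\), and it is pointed, since \(x \in W_V \cap (-W_V)\) satisfies \(\Omega(x.v,v) = 0\) for all \(v\) and hence \(x = 0\) by polarisation and \(\#\)-skewness.

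For (b): the form \(\langle v,w\rangle := \Omega(y.v,w)\) is positive-definite and symmetric (the latter since \(y^\# = -y\)), \(y\) is invertible, and the polar decomposition of \(y\) with respect to \(\langle\,\cdot\,,\,\cdot\,\rangle\) yields \(J_0 := y(-y^2)^{-1/2} \in \End(V)\) (with \((-y^2)^{-1/2}\) a polynomial in \(y^2\)) satisfying \(J_0^2 = -\id_V\), \(J_0^\# = -J_0\), and \(\Omega(J_0.v,v) > 0\) for \(v \neq 0\); thus \(\half J_0\) is an \(H\)-element of the hermitian simple Lie algebra \(\sp(V,\Omega)\). Since \(J_0\) is a polynomial in \(y\) it commutes with every \(\Psi_g\), so \(J_0 \in \End_B(V) \cap \sp(V,\Omega) = \sp(V,\beta)\). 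It remains to verify that \(\half J_0\) is an \(H\)-element of \(\sp(V,\beta)\): its \(\ad\)-spectrum on \(\sp(V,\beta)\) is contained in \(\{0,i,-i\}\), and \(\ker(\ad_{\sp(V,\beta)}\half J_0) = \sp(V,\beta) \cap \ker(\ad_{\sp(V,\Omega)}\half J_0)\) is a maximal compactly embedded subalgebra of \(\sp(V,\beta)\). The single nonformal point — that \(\half J_0\) is not central in \(\sp(V,\beta)\), equivalently that \(\sp(V,\beta)\) is not compact — is exactly where convexity enters: by \cite[Thm.\ V.10]{Ne94a} the reductive algebras arising as \(\sp(V,\beta)\) have no nonzero compact semisimple ideal, so the semisimple part of \(\sp(V,\beta)\) is a product of hermitian simple ideals and \(\sp(V,\beta)\) is of hermitian type. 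Then \(\half J_0 \mapsto \half J_0\) exhibits \(\sp(V,\beta) \hookrightarrow \sp(V,\Omega)\) as an \((H_2)\)-homomorphism. I expect this last hermitian-type statement — excluding compact semisimple factors in \(\sp(V,\beta)\) via convexity — to be the essential difficulty; everything else is structural bookkeeping.

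For (c): by (a) the Lie algebra \(\sp(V,\beta)\) is reductive, and it is quasihermitian by Theorem \ref{thm:sympmodconv-decomp}(a) applied to the faithful module \((V,\Omega)\) of convex type. Extending the compactly embedded Cartan subalgebra \(\ft_\fl\) of \(\fl\) to a compactly embedded Cartan subalgebra \(\ft'\) of \(\sp(V,\beta)\) gives \(\fz_V(\ft') \subset \fz_V(\ft_\fl) = \{0\}\), while \(\fz_{\fz(\sp(V,\beta))}(V) = \{0\}\) because \(\sp(V,\beta) \hookrightarrow \End(V)\) is faithful. Finally, for the same \(f\), the pair \((V, f \circ \beta)\) is a symplectic \(\sp(V,\beta)\)-module (because \(\sp(V,\beta) \subset \sp(V,\Omega)\)) and is still of convex type, witnessed by \(y\). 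Thus Theorem \ref{thm:spindler-adm} applies to \(\hsp(V,\beta) = \g(\sp(V,\beta), V, \fz, \beta)\) and shows this Lie algebra to be admissible.
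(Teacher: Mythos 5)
Your proposal is correct in substance and reaches all three conclusions, but by a genuinely different route at the two key points. For the reductivity of \(\sp(V,\beta)\), the paper builds a complex structure \(I\) on \(V\) out of the root space decomposition \(V_\C = V_+\oplus V_-\) with \(V_\pm=\sum_{\alpha\in\Delta_r^\pm}\g_\C^\alpha\), checks by hand that \(I\in\sp(V,\beta)\) and that \(I\) is an interior point of the invariant cone of \(\sp(V,\Omega_g)\), and then obtains reductivity from the \(\theta\)-invariance of \(\sp(V,\beta)\) under the Cartan involution \(\theta=\Ad(I)\) together with \cite[Cor.\ 1.1.5.4]{Wa72}; you instead identify \(\sp(V,\beta)\) as the \(\#\)-skew part of the commutant \(\End_B(V)\) of the \(\#\)-invariant algebra \(B\) generated by the \(\Psi_g\) and argue that this commutant is a semisimple associative \(*\)-algebra. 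That argument works, but it silently uses that \(\#\) restricted to \(\End_\fl(V)\) is a \emph{positive} involution (a \(\#\)-invariant subalgebra of a semisimple algebra with a non-positive involution need not be semisimple); this positivity is where convexity enters your version, via the coupling of signs in Theorem \ref{thm:sympmodconv-decomp}(b), and it deserves to be made explicit. For part (b) the paper reuses the same \(I\), whereas you construct \(J_0\) by polar decomposition of a convexity witness \(\rho(y)\) with respect to \(\langle v,w\rangle=\Omega(y.v,w)\); both devices produce a complex structure in \(\sp(V,\beta)\) with positive Hamiltonian, and your observation that \(J_0\), being a polynomial in \(\rho(y)\), automatically commutes with every \(\Psi_g\) is a clean substitute for the paper's root-space computation of \(\beta(V_\pm,V_\pm)=\{0\}\). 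Your treatment of admissibility in (a) and of (c) coincides with the paper's (interior point of the Hamiltonian cone; verification of the hypotheses of Theorem \ref{thm:spindler-adm}).

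The one soft spot is the final step of (b): the claim that \(\tfrac{1}{2}J_0\) cannot be central in \(\sp(V,\beta)\) is false in general, and \cite[Thm.\ V.10]{Ne94a} does not exclude it. For instance, let \(\fl=\R\) act on \(V=\C^2\) by \(i\,\id\), let \(\fz=\Herm_2(\C)^*\) and \(\beta(v,w)(A):=\Omega(Av,w)\); then \(\g(\fl,V,\fz,\beta)\) is admissible, but \(\sp(V,\beta)=i\R\,\id_V\) is one-dimensional abelian, so \(J_0=i\,\id\) is central and \(\spec(\ad\tfrac{1}{2}J_0)=\{0\}\). The absence of compact \emph{semisimple} ideals does not help here, since the obstruction sits in the center. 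In such degenerate cases \(\tfrac{1}{2}J_0\) is an \(H\)-element only under the inclusive reading \(\spec(\ad H_0)\subset\{0,i,-i\}\) of Definition \ref{def:h-element}. Since the paper's proof makes the identical leap (``therefore, it is also an \(H\)-element of \(\sp(V,\beta)\)''), this is a defect of the formulation shared by the paper rather than a gap specific to your argument, but you should not present it as being settled by the structure of the semisimple part.
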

\begin{proof}
  We choose a compactly embedded Cartan subalgebra \(\ft_\fl \subset \fl\) of \(\fl\). By Theorem \ref{thm:spindlers-constr}, the subalgebra \(\ft := \{0\} \times \fz \times \ft_\fl\) is then a compactly embedded Cartan subalgebra of \(\g\). We denote by \(\Delta := \Delta(\g_\C, \ft_\C)\) the corresponding set of roots. Let \(\Delta^+\) be an adapted system of positive roots such that the cone \(\Cmin\) is pointed (cf.\ \cite[Thm.\ VIII.2.1]{Ne00}). Then the cone \(C_{\mathrm{min}, \fz} := \Cmin \cap \fz\) is also pointed, so that the dual cone \(C_{\mathrm{min}, \fz}^*\) is generating in \(\fz\).

  Since the maximal nilpotent ideal of \(\g\) is given by \(V \times \fz\), we have by \cite[Prop.\ VII.2.5]{Ne00} that \(V_\C = \sum_{\alpha \in \Delta_r} \g_\C^\alpha\).
  Let \(V_\pm := \sum_{\alpha \in \Delta_r^\pm} \g_\C^\alpha\).
  Then \(\oline{V_\pm} = V_\mp\) and \(V_\C = V_- \oplus V_+\).
  We define a complex structure \(I_\C\) on \(V_\C\) by defining \(I_\C v := \pm iv\) for \(v \in V_\pm\).
  Then we obtain a complex structure \(I\) on \(V\) by using the isomorphism
  \[V_+ \rightarrow V, \quad v \mapsto v + \oline{v}.\]
  We now show that \(I \in \sp(V,\beta)\): For \(\alpha,\beta \in \Delta_r^\pm\), we have
  \[[\g_\C^\alpha,\g_\C^\beta] \subset \g_\C^{\alpha + \beta} \cap [V,V] \subset V \cap \fz(\g) = \{0\},\]
  so that \(\beta(V_\pm, V_\pm) = \{0\}\). Thus, for \(x,y \in V_+\), we have
  \[\beta(I(x+\oline{x}), y + \oline{y}) = \beta(ix - i\oline{x}, y + \oline{y}) = i(\beta(x,\oline{y}) - \beta(\oline{x},y)) = -\beta(x+\oline{x}, I(y + \oline{y})),\]
  so that \(I \in \sp(V,\beta)\). Let \(g \in (C_{\mathrm{min}, \fz}^*)^o\) and let \(\Omega_g := g \circ \beta\). Then
  \[\beta(I(x+\oline{x}), x + \oline{x}) = i(\beta(x,\oline{x}) - \beta(\oline{x},x)) = 2i[x,\oline{x}] \in C_{\mathrm{min}, \fz}\]
  for all \(x \in V^+\) shows that \(\Omega_g(Iv,v) > 0\) for all \(v \in V\), i.e.\ \(I\) is contained in the interior of the up to sign unique pointed generating invariant closed convex cone \(W_g \subset \sp(V,\Omega_g)\). In particular, the invariant cone \(W_g \cap \sp(V,\beta)\) is pointed and generating in \(\sp(V,\beta)\), which shows that \(\sp(V,\beta)\) is admissible.
  Since \(I\) is a complex structure of \(V\) with \(I \in W_g\), the map \(\theta(x) := IxI^{-1}, x \in \sp(V,\Omega_g)\), is a Cartan involution of \(\sp(V,\Omega_g)\) (cf.\ \cite[Ch.\ II \S 7]{Sa80}).
  By using the alternative characterization of \(\sp(V,\beta)\) from Remark \ref{rem:gen-symp-alg-char}, we see that the subalgebra \(\sp(V,\beta)\) is \(\theta\)-invariant, so that it is reductive by \cite[Cor.\ 1.1.5.4]{Wa72}.
  This proves (a).
  We also note that \(I \in \fz(\sp(V,\Omega_g)^\theta)\), which shows that \(I\) is an \(H\)-element of \(\sp(V,\Omega_g)\).
  Therefore, it is also an \(H\)-element of \(\sp(V,\beta)\), so that the inclusion of \(\sp(V,\beta)\) into \(\sp(V,\Omega_g)\) is an \((H_2)\)-homomorphism.
  This proves (b) because, for every linear functional \(f \in \fz^*\) with the property that \((V, f \circ \beta)\) is a symplectic \(\fl\)-module of convex type, we can choose \(\Delta^+\) in such a way that \(f\) is contained in the interior of \(C_{\mathrm{min}, \fz}^*\) by \cite[Prop.\ VIII.2.9(i)]{Ne00}.

  (c) follows from Theorem \ref{thm:spindler-adm} because \(\sp(V,\beta)\) is admissible by (a) and therefore quasihermitian by \cite[Thm.\ VII.3.10]{Ne00}, and because \((V, g \circ \beta)\) is a symplectic \(\sp(V,\beta)\)-module of convex type.
\end{proof}

\subsection{Reductive complements in admissible Lie algebras}
\label{sec:adm-reducompl}

As we have seen in Section \ref{sec:lie-adm}, every admissible Lie algebra \(\g\) can be written as a semidirect product \(\g = \fu \rtimes \fl\), where \(\fu\) is the maximal nilpotent ideal of \(\g\) and \(\fl\) is a reductive subalgebra of \(\g\). Moreover, this decomposition has the additional properties that \(\fl\) is quasihermitian and \(\fu\) can be written as \(\fu = V + \fz(\g)\), where \(V := [\fl,\fu]\) can be regarded as a symplectic \(\fl\)-module of convex type.

Our first goal in this section is to show that every decomposition of the form \(\g = \fu \rtimes \fl'\), where \(\fl'\) is a subalgebra of \(\g\), has these additional properties. Since this will be important later, we will also show for every semisimple derivation \(D \in \g\) that induces a 3-grading on \(\g\) the existence of a \(D\)-invariant decomposition \(\g = \fu \rtimes \fl\) such that \(D(\fz(\fl)) = \{0\}\).

\begin{lem}
  \label{lem:adm-semprod-solv}
  Let \(\g\) be a solvable admissible Lie algebra. Let \(\fu \subset \g\) be the maximal nilpotent ideal of \(\g\) and let \(\fa \subset \g\) be an abelian subalgebra of \(\g\) such that \(\g = \fu \rtimes \fa\). Then \(\fa\) is compactly embedded in \(\g\) and \(\ft := \fz(\g) + \fa\) is a compactly embedded Cartan subalgebra of \(\g\).
\end{lem}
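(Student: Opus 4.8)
The plan is to prove that $\ft := \fz(\g) + \fa$ is a Cartan subalgebra of $\g$ and that it is conjugate, under an inner automorphism, to a compactly embedded Cartan subalgebra of $\g$. Since for $\varphi \in \Aut(\g)$ and a subalgebra $\fk$ one has $\langle e^{\ad \varphi(\fk)}\rangle = \varphi\langle e^{\ad \fk}\rangle\varphi^{-1}$, and conjugation by $\varphi$ is a homeomorphism of $\Aut(\g)$, the property of being compactly embedded is preserved under $\Aut(\g)$-conjugation; it is also obviously inherited by subalgebras. Hence the two assertions (for $\ft$ and for $\fa \subseteq \ft$) will both follow.

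First I would fix the structure coming from Spindler's construction. Since $\g$ is solvable and admissible, Theorem \ref{thm:spindlers-constr} lets us write $\g = \g(\fl_0, V_0, \fz(\g), \beta)$, where $\fl_0$ is reductive; being a reductive subalgebra of the solvable Lie algebra $\g$, $\fl_0$ is in fact abelian. Moreover $\fu = V_0 \oplus \fz(\g)$ is the maximal nilpotent ideal, $[\fu,\fu] \subseteq \fz(\g)$, one has $\fz_{V_0}(\fl_0) = \{0\}$ by \eqref{eq:spindler-center-V}, and $\ft_0 := \fz(\g) + \fl_0$ is a compactly embedded Cartan subalgebra of $\g$ (using also \cite[Thm.\ VII.3.10]{Ne00}). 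I also record that $\g = \fu \oplus \fa$ as a vector space, since $\fa$ is a complement of $\fu$, and that $\ft \cap \fu = \fz(\g)$, since $\fa \cap \fu = \{0\}$.

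The key step is to show that $\ft$ is a Cartan subalgebra. It is abelian, hence nilpotent, because $[\fz(\g),\g] = \{0\}$ and $\fa$ is abelian by hypothesis. For self-normality, let $x \in N_\g(\ft)$ and decompose $x = x_\fu + x_\fa$ with $x_\fu \in \fu$ and $x_\fa \in \fa$. Since $\fa \subseteq \ft$ is abelian, $[x_\fu, \fa] = [x,\fa] \subseteq \ft \cap \fu = \fz(\g)$, and together with $[x_\fu, \fu] \subseteq [\fu,\fu] \subseteq \fz(\g)$ and $\g = \fu + \fa$ this gives $[x_\fu, \g] \subseteq \fz(\g)$. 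Writing $x_\fu = v_0 + z_0$ with $v_0 \in V_0$, $z_0 \in \fz(\g)$, we get for every $l \in \fl_0$ that $l.v_0 = -[x_\fu, l] \in V_0 \cap \fz(\g) = \{0\}$ (as $l.v_0 \in V_0$ while $[x_\fu,l] \in [x_\fu,\g] \subseteq \fz(\g)$), so $v_0 \in \fz_{V_0}(\fl_0) = \{0\}$. Hence $x_\fu = z_0 \in \fz(\g)$, so $x \in \ft$, $N_\g(\ft) = \ft$, and $\ft$ is a Cartan subalgebra.

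To finish, I would invoke the classical fact that in a solvable Lie algebra all Cartan subalgebras are conjugate under $\Inn(\g)$: choosing $\varphi \in \Inn(\g)$ with $\ft = \varphi(\ft_0)$ and applying the remark in the first paragraph to $\langle e^{\ad \ft}\rangle = \varphi\langle e^{\ad \ft_0}\rangle\varphi^{-1}$ shows that $\ft$, and therefore $\fa$, is compactly embedded. I expect the self-normalizer computation to be the heart of the argument; it is the only place where admissibility is used, and only through Spindler's relation $\fz_{V_0}(\fl_0) = \{0\}$. If one prefers not to cite the conjugacy theorem, one can produce $\varphi$ explicitly: the $V_0$-component of the identity $[\fa,\fa] = \{0\}$ makes $l \mapsto \pr_{V_0}(a_l)$ a $1$-cocycle of the $\fl_0$-module $V_0$, where $a_l \in \fa$ is the unique element projecting to $l$ along $\fu$; and $H^1(\fl_0, V_0) = \{0\}$ since $\fl_0$ is abelian, acts semisimply on $V_0$ with purely imaginary spectrum, and $\fz_{V_0}(\fl_0) = \{0\}$ (pick $l_0 \in \fl_0$ acting invertibly on $V_0$ and use the standard contraction argument), so some inner automorphism $e^{\ad w}$ with $w \in V_0$ moves $\fa$ to a complement $\fa'$ of $\fu$ with $\pr_{V_0}(\fa') = \{0\}$, whence $\fz(\g) + \fa' = \ft_0$ and $\ft = e^{-\ad w}(\ft_0)$.
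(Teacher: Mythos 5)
Your proof is correct, and it reaches the conclusion by a genuinely different route at the central step. Both arguments share the same skeleton: fix the reference compactly embedded Cartan subalgebra $\ft_0 = \fz(\g) + \fl_0$ coming from Spindler's construction, show that $\ft = \fz(\g) + \fa$ is a Cartan subalgebra, and then transport compact embeddedness via the conjugacy theorem for Cartan subalgebras of solvable Lie algebras (the same Bourbaki citation). Where you differ is in how you certify that $\ft$ is a Cartan subalgebra: the paper transports a regular element $y \in \ft_0$ into $\fa$ by writing $x = u + y$ with $u \in \fu$, computes that the nilspace of $\ad(x)$ is exactly $\fz(\g) + \fa$, and invokes the characterization of Cartan subalgebras as nilspaces of regular elements; you instead verify self-normalization directly, using only $[\fu,\fu] \subseteq \fz(\g)$, $\ft \cap \fu = \fz(\g)$, and $\fz_{V_0}(\fl_0) = \{0\}$ to force the $V_0$-component of any normalizing element to vanish. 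Your version is slightly more elementary in that it avoids the regular-element machinery (Bourbaki Ch.~VII, \S 2) and isolates cleanly the single point where admissibility enters, namely $\fz_{V_0}(\fl_0) = \{0\}$; the paper's version has the minor advantage of exhibiting an explicit regular element of $\fa$, which makes the rank count transparent. Your optional cocycle argument producing the conjugating automorphism $e^{\ad w}$, $w \in V_0$, explicitly is also sound (the action of a suitable $l_0 \in \fl_0$ on $V_0$ is invertible because the weights are nonzero and finite in number), though it is not needed once the conjugacy theorem is invoked.
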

\begin{proof}
  By Theorem \ref{thm:spindlers-constr}, there exists a compactly embedded abelian subspace \(\ft_r' \subset \g\) with \(\ft_r' \cap \fu = \{0\}\) such that \(\ft' := \fz(\g) \oplus \ft_r'\) is a compactly embedded Cartan subalgebra with \(\g = \fu + \ft_r'\).
  By \cite[Ch.\ VII, \S 2, no. 1, Cor.\ 1]{Bou05}, there exists \(y \in \ft_r'\) such that \(\ft'\) is the nilspace of \(\ad(y)\), which is also the centralizer of \(y\) because \(\ft'\) is abelian.
  Since \(\fu + \ft_r' = \fu + \fa\), there exists \(x \in \fa\) and \(u \in \fu\) such that \(x = u + y\).

  Using Theorem \ref{thm:spindlers-constr}, we decompose \(\fu\) into \(\fu = V + \fz(\g)\), where \(V := [\ft', \fu]\).
  For \(u' \in \fu\), we then have \([y,u'] \in V\) and \([u,u'] \in \fz(\g)\).
  Since \(V \cap \fz(\g) = \{0\}\) and \(\ad(y)\lvert_V\) is injective, this shows that \(\ker(\ad_\fu(x)) = \fz(\g)\) and thus \(\ker(\ad(x)) = \fz(\g) + \fa\).
  The above argument also shows that the nilspace of \(x\) is \(\fz(\g) + \fa\) because \(\ad(x)(\g) \subset \fu\). 
  Since \(\fa \cong \g/\fu \cong \ft_r'\), this shows that \(x\) is regular.
  In particular, \(\ft = \fz(\g) + \fa\) is a Cartan subalgebra of \(\g\).
  By \cite[Ch.\ VII, \S 3, no.\ 4, Thm.\ 3]{Bou05}, the Cartan subalgebras \(\ft\) and \(\ft'\) are conjugate under an inner automorphism of \(\g\).
  Hence, \(\ft\) is compactly embedded in \(\g\) because \(\ft'\) is compactly embedded in \(\g\).
\end{proof}

\begin{thm}
  \label{thm:adm-semprod}
  Let \(\g\) be an admissible Lie algebra and let \(\fu\) be the maximal nilpotent ideal of \(\g\). Let \(\fl \subset \g\) be a subalgebra such that \(\g = \fu \rtimes \fl\). Then the following holds:
  \begin{enumerate}
    \item \(\fl\) is reductive and quasihermitian.
    \item If \(\ft_\fs \subset \fs := [\fl,\fl]\) is a compactly embedded Cartan subalgebra of \(\fs\), then \(\ft := \fz(\g) + \fz(\fl) + \ft_\fs\) is a compactly embedded Cartan subalgebra of \(\g\).
  \end{enumerate}
\end{thm}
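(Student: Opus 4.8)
The plan is to deduce (a) immediately from Spindler's Construction, and to prove (b) by building the candidate Cartan subalgebra from a compactly embedded Cartan subalgebra of $\fl$ and then reducing the single non-formal point to the canonical Spindler realisation of $\g$, where it is part of Theorem~\ref{thm:spindlers-constr}(1). For (a): since $\fu \cap \fl = \{0\}$ and $\fu + \fl = \g$, the quotient map restricts to a Lie algebra isomorphism $\fl \xrightarrow{\sim} \g/\fu$. Realising $\g \cong \g(\fl_0, V, \fz(\g), \beta)$ as in Theorem~\ref{thm:spindlers-constr}(1), with $\fl_0$ reductive and $\fu = V + \fz(\g)$ the maximal nilpotent ideal, gives $\fl \cong \g/\fu \cong \fl_0$, and Theorem~\ref{thm:spindler-adm} forces $\fl_0$ to be quasihermitian since $\g$ is admissible. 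Hence $\fl$ is reductive and quasihermitian.

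For (b), put $\fs := [\fl,\fl]$ and $\ft_\fl := \fz(\fl) + \ft_\fs$. By (a) we have $\fl = \fz(\fl) \oplus \fs$, so $\ft_\fl$ is a Cartan subalgebra of $\fl$ by the structure theory of reductive Lie algebras, and it is compactly embedded in $\fl$ because $\fz(\fl)$ is central and $\ft_\fs$ is compactly embedded in $\fs$. The candidate is $\ft := \fz(\g) + \ft_\fl = \fz(\g) + \fz(\fl) + \ft_\fs$, which is abelian, hence nilpotent. Everything then hinges on the following \emph{Claim}: $\ft_\fl$ acts on the $\fl$-module $\fu/\fz(\g)$ by semisimple endomorphisms with purely imaginary spectrum, and $\fz_{\fu/\fz(\g)}(\ft_\fl) = \{0\}$.

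To establish the Claim I would fix a Spindler realisation $\g \cong \g(\fl_0, V, \fz(\g), \beta)$ arising via Theorem~\ref{thm:spindlers-constr}(1) from an actual compactly embedded Cartan subalgebra $\ft'$ of $\g$ (which exists by \cite[Thm.~VII.3.10]{Ne00}); then $\ft_{\fl_0} := \ft' \cap \fl_0$ is a compactly embedded Cartan subalgebra of $\fl_0$ acting on $V$ by semisimple endomorphisms with purely imaginary spectrum and with $\fz_V(\ft_{\fl_0}) = \{0\}$ by \eqref{eq:spindler-center-V}; moreover $\fu = V \oplus \fz(\g)$ as $\fl_0$-modules, so $V \cong \fu/\fz(\g)$ as $\fl_0$-modules. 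Using $[\fu,\fu] \subseteq \fz(\g)$, the isomorphism $\alpha\colon \fl \xrightarrow{\sim} \fl_0$ induced by $\fl \cong \g/\fu \cong \fl_0$ satisfies $\alpha(m) \equiv m \pmod{\fu}$, hence $\ad_{\fl_0}(\alpha(m))$ and $\ad_\fl(m)$ agree on $\fu/\fz(\g)$; thus the $\fl$-module $\fu/\fz(\g)$ corresponds, under $\alpha$, to the $\fl_0$-module $V$. Now $\alpha(\ft_\fl)$ and $\ft_{\fl_0}$ are both compactly embedded Cartan subalgebras of the reductive Lie algebra $\fl_0$, hence conjugate under $\Inn(\fl_0)$; an element of $\Inn(\fl_0)$ acts on $V$ by an invertible operator intertwining the two actions, and semisimplicity, imaginary spectrum and triviality of the common kernel are conjugation invariant, which proves the Claim.

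Granting the Claim, (b) follows in two steps. First, $\ft$ is a Cartan subalgebra of $\g$: it is nilpotent, and if $x = u + m$ with $u \in \fu$, $m \in \fl$ normalises $\ft$, then (using $\g = \fu \oplus \fl$ and $\ft = \fz(\g) \oplus \ft_\fl$) the relation $[x, \ft_\fl] \subseteq \ft$ forces $[m, \ft_\fl] \subseteq \ft_\fl$, so $m \in N_\fl(\ft_\fl) = \ft_\fl$, and $[u, \ft_\fl] \subseteq \fz(\g)$, so the image of $u$ in $\fu/\fz(\g)$ lies in $\fz_{\fu/\fz(\g)}(\ft_\fl) = \{0\}$, whence $u \in \fz(\g)$ and $x \in \ft$. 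Second, $\ft$ is compactly embedded in $\g$: since $\ad_\g(\fz(\g)) = \{0\}$ it suffices that $\la e^{\ad_\g \ft_\fl}\ra$ be relatively compact; $\ad_\g(\ft_\fl)$ preserves $\g = \fu \oplus \fl$, acts relatively compactly on $\fl$ (as $\ft_\fl$ is compactly embedded in $\fl$), and acts relatively compactly on $\fu$ because, by the Claim, the extension of $\ft_\fl$-modules $0 \to \fz(\g) \to \fu \to \fu/\fz(\g) \to 0$ splits (the abelian $\ft_\fl$ has no zero weight on $\fu/\fz(\g)$), so $\ad_\fu(\ft_\fl)$ is semisimple with purely imaginary spectrum and $\overline{\la e^{\ad_\fu \ft_\fl}\ra}$ is compact by the argument in the proof of Lemma~\ref{lem:sympmodconv-spec-skewsym-end}. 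The step I expect to be the main obstacle is the Claim: a reductive complement $\fl$ other than Spindler's need not be manifestly compatible with any compactly embedded Cartan subalgebra of $\g$, so one has to transport to the canonical realisation and exploit the $\Inn(\fl_0)$-conjugacy of compactly embedded Cartan subalgebras of $\fl_0$ together with the compatibility of that conjugation with the module $V$.
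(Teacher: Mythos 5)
Your proof of (a) is essentially the paper's: both identify $\fl$ with $\g/\fu$ and with a Spindler complement $\fl_0$, then invoke Theorem~\ref{thm:spindler-adm}. For (b), however, you take a genuinely different route, and it is correct. The paper reduces to the solvable case: it cites \cite[Lem.\ I.9]{KN96} to see that $\ft_\fs$ is compactly embedded in $\g$, uses \cite[Prop.\ VII.1.9]{Ne00} to produce a compactly embedded Cartan subalgebra $\ft'$ adapted to the Levi decomposition, forms the solvable admissible subalgebra $\g_r = \fu + \fz(\fl) + \ft_\fs$, and applies Lemma~\ref{lem:adm-semprod-solv} to $\g_r = \fu \rtimes (\fz(\fl)+\ft_\fs)$ to conclude that $\ft$ is a compactly embedded Cartan subalgebra. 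You instead isolate the single non-formal input as your \emph{Claim} about the $\ft_\fl$-action on $\fu/\fz(\g)$, prove it by transporting the module data from the canonical Spindler realisation through the isomorphism $\fl \cong \g/\fu \cong \fl_0$ (using $[\fu,\fu]\subset\fz(\g)$ to see that the two actions agree on $\fu/\fz(\g)$, and $\Inn(\fl_0)$-conjugacy of compactly embedded Cartan subalgebras of $\fl_0$ to match $\alpha(\ft_\fl)$ with $\ft_{\fl_0}$), and then verify self-normalisation and compact embedding of $\ft$ by hand via a Fitting-type splitting of $0\to\fz(\g)\to\fu\to\fu/\fz(\g)\to 0$. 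The paper's argument is shorter because it recycles Lemma~\ref{lem:adm-semprod-solv} and the structural results of \cite{Ne00}; yours is more self-contained and makes explicit exactly which property of the Spindler module ($\fz_V(\ft_{\fl_0})=\{0\}$ plus semisimplicity with imaginary spectrum) drives the conclusion. One small citation slip: for the final compactness step you refer to Lemma~\ref{lem:sympmodconv-spec-skewsym-end}, which goes in the opposite direction (from bounded orbits to imaginary spectrum); what you actually need is the elementary fact that a commuting family of semisimple endomorphisms with purely imaginary spectrum is simultaneously diagonalisable over $\C$ and hence generates a relatively compact subgroup. That fact is standard, so this is a misattribution rather than a gap.
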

\begin{proof}
  (a) By Theorem \ref{thm:spindlers-constr}, there exists a reductive subalgebra \(\fl' \subset \g\) with \(\g = \fu \rtimes \fl'\) which is quasihermitian by Theorem \ref{thm:spindler-adm}. Now \(\fl \cong \g/\fu \cong \fl'\) implies that \(\fl\) is also reductive and quasihermitian.

  (b) Let \(\ft_r := \fz(\fl)\).
  We first note that \(\rad(\g) = \fu + \ft_r\), so that \(\fs\) is a Levi-complement of \(\g\).
  The compactly embedded subalgebra \(\ft_\fs\) is also compactly embedded in \(\g\) because \(\fs\) is semisimple (cf.\ \cite[Lem.\ I.9]{KN96}).
  Since compactly embedded Cartan subalgebras and Levi complements are conjugate under inner automorphisms, we can use \cite[Prop.\ VII.1.9(i)]{Ne00} to obtain a compactly embedded Cartan subalgebra \(\ft'\) of \(\g\) with \(\ft' = (\ft' \cap \rad(\g)) \oplus \ft_\fs\) and \([\ft' \cap \rad(\g), \fs] = \{0\}\).
  Let \(\fa \subset \ft' \cap \rad(\g)\) be a vector space complement of \(\fz(\g)\).
  Then \(\fl' := \fa + \fs\) is a reductive Lie algebra and we have
  \[\fu \rtimes (\ft_r + \fs) = \g = \fu \rtimes (\fa + \fs) \quad \text{and} \quad \fu + \ft_r = \rad(\g) = \fu + \fa\]
  (cf.\ \cite[Prop.\ VII.1.9(ii)(c)]{Ne00}).
  In particular, we have
  \[\g_r := \fu + \ft_r + \ft_\fs = \fu + \fa + \ft_\fs,\]
  and this Lie algebra is solvable and admissible because \(\ft' = \fz(\g) + \fa + \ft_\fs\) is compactly embedded in \(\g_r\).
  Now Lemma \ref{lem:adm-semprod-solv} shows that \(\ft\) is also a compactly embedded Cartan subalgebra of \(\g_r\).
  In particular, \(\ft\) is a compactly embedded Cartan subalgebra of \(\g\) because \([\ft_r, \fs] = \{0\}\).
\end{proof}

\begin{cor}
  \label{cor:adm-semprod-spindler}
  Let \(\g\) be an admissible Lie algebra and let \(\fu\) be the maximal nilpotent ideal of \nolinebreak\(\g\). Let \(\fl \subset \g\) be a subalgebra such that \(\g = \fu \rtimes \fl\) and let \(\ft_\fl \subset \fl\) be a compactly embedded Cartan subalgebra. Then \(V := [\ft_\fl, \fu]\) is an \(\fl\)-module with respect to the adjoint representation and we have \(\fz_V(\ft_\fl) = \{0\}\). Moreover, we have \(\g \cong \g(\fl, V, \fz(\g), \beta)\), where
  \[\beta: V \times V \rightarrow \fz(\g), \quad \beta(x,y) := [x,y].\]
\end{cor}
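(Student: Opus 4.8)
The plan is to check that the data $(\fl,\ft_\fl)$ satisfies all the hypotheses of Spindler's construction, using the structural information from Theorem \ref{thm:adm-semprod} together with a root-space computation. First I would invoke Theorem \ref{thm:adm-semprod}: the complement $\fl$ is reductive and quasihermitian, and, writing $\ft_\fl = \fz(\fl)\oplus\ft_\fs$ with $\ft_\fs$ a compactly embedded Cartan subalgebra of $\fs := [\fl,\fl]$, the subalgebra $\ft := \fz(\g)+\ft_\fl$ is a compactly embedded Cartan subalgebra of $\g$. Then $\ad\ft$ acts semisimply on $\g$, hence on the maximal nilpotent ideal $\fu$, so $\fu_\C$ is the direct sum of its $\ft$-weight spaces; since $\fz(\g)\subseteq\ft$ acts trivially, $[\ft,\fu] = [\ft_\fl,\fu] = V$ equals the sum of the non-zero weight spaces, which immediately gives $\fz_V(\ft_\fl) = \fz_V(\ft) = \{0\}$. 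As $\fz_\g(\ft) = \ft$ (valid for any Cartan subalgebra), the zero weight space of $\ad\ft$ in $\fu_\C$ is $(\fu\cap\ft)_\C$; and $\fu\cap\ft = \fz(\g)$ because $\fz(\g)\subseteq\fu$ while $\ft_\fl\cap\fu\subseteq\fl\cap\fu = \{0\}$. Hence $\fu = \fz(\g)\oplus V$.

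The key point — and, I expect, the main obstacle — is that $V$, although defined only through $\ft_\fl$, is $\fl$-invariant: upgrading the obvious $\ft_\fl$-invariance to invariance under the semisimple part $\fs$ is the one step that is not bookkeeping. I would pass to the root decomposition $\g_\C = \ft_\C\oplus\bigoplus_{\alpha\in\Delta}\g_\C^\alpha$ and separate solvable from semisimple roots. On one side: every root vanishes on $\fz(\g)$ since $\ad\fz(\g)=\{0\}$, so if $\g_\C^\alpha\subseteq\fu_\C$ then, using that $\fu$ is a real ideal with $[\fu,\fu]\subseteq\fz(\g)$ (Theorem \ref{thm:spindlers-constr}), we have $Z^*\in\fu_\C$ and $[Z,Z^*]\in[\fu_\C,\fu_\C]\subseteq\fz(\g)_\C$, whence $\alpha([Z,Z^*]) = 0$ for all $Z\in\g_\C^\alpha$, i.e.\ $\alpha\in\Delta_r$; moreover $\Delta_r$ is stable under $\alpha\mapsto-\alpha$ (apply the defining condition to $Z^*\in\g_\C^{-\alpha}$ and use $Z^{**} = Z$). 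On the other side, $\fl$ is $\ft$-invariant (because $[\fz(\g),\fl]=\{0\}$ and $\ft_\fl\subseteq\fl$), hence a sum of $\ft$-weight spaces, and any root $\alpha$ with $\g_\C^\alpha\cap\fl_\C\neq\{0\}$ is semisimple (evaluate the defining condition on a root vector in $\fs_\C$). Since $V_\C$ is the sum of the root spaces contained in $\fu_\C$, all of which lie in $\Delta_r$, for root vectors $x\in\g_\C^\alpha\subseteq\fl_\C$ and $v\in\g_\C^\gamma\subseteq\fu_\C$ we have $\alpha\in\Delta_s$, $\gamma\in\Delta_r$, so $\alpha+\gamma\neq 0$, and $[x,v]\in\g_\C^{\alpha+\gamma}\cap\fu_\C\subseteq V_\C$; combined with $[\ft_\C,V_\C]\subseteq V_\C$ this yields $[\fl,V]\subseteq V$.

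Finally I would assemble the isomorphism. Since $[V,V]\subseteq[\fu,\fu]\subseteq\fz(\g)$, the bracket restricts to a skew-symmetric bilinear map $\beta:V\times V\to\fz(\g)$, and it is $\fl$-invariant by the Jacobi identity because its values are central. Writing each element of $\g = \fu\rtimes\fl$ uniquely as $v+z+x$ with $v\in V$, $z\in\fz(\g)$, $x\in\fl$, one checks that the bracket of $\g$ coincides term by term with the bracket of $\g(\fl,V,\fz(\g),\beta)$, so the assignment $v+z+x\mapsto(v,z,x)$ is a Lie algebra isomorphism $\g\cong\g(\fl,V,\fz(\g),\beta)$. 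All of this rests on Theorem \ref{thm:adm-semprod} and the admissibility facts already recorded for Spindler's construction; the only genuinely delicate ingredient is the solvable-versus-semisimple root dichotomy used to prove the $\fl$-invariance of $V$.
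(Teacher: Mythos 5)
Your overall strategy is viable and you correctly isolate the one non-trivial point (the $\fl$-invariance of $V$), but the root-dichotomy argument you give for it has a genuine gap. You decompose $\g_\C=\fu_\C\oplus\fl_\C$ as $\ft_\C$-modules, so each root space splits as $\g_\C^\alpha=(\g_\C^\alpha\cap\fu_\C)\oplus(\g_\C^\alpha\cap\fl_\C)$, and your two observations only establish: (A) if the \emph{entire} root space $\g_\C^\gamma$ lies in $\fu_\C$ then $\gamma\in\Delta_r$, and (B) if $\g_\C^\alpha\cap\fl_\C\neq\{0\}$ then $\alpha\in\Delta_s$. Nothing you prove excludes a ``mixed'' root $\alpha$ with both $\g_\C^\alpha\cap\fu_\C\neq\{0\}$ and $\g_\C^\alpha\cap\fl_\C\neq\{0\}$; such a root is semisimple by (B), so your assertion that ``$V_\C$ is the sum of the root spaces contained in $\fu_\C$, all of which lie in $\Delta_r$'' is unjustified, and for $x\in\g_\C^{-\alpha}\cap\fl_\C$ and $v\in\g_\C^{\alpha}\cap\fu_\C$ the conclusion $\alpha+\gamma\neq 0$ fails: a priori $[x,v]$ could be a nonzero element of $\ft_\C\cap\fu_\C=\fz(\g)_\C$, which would destroy $[\fl,V]\subseteq V$. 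The purity of root spaces that you are implicitly using is true, but it is essentially the content of \cite[Prop.\ VII.2.5]{Ne00} --- i.e.\ precisely the structural input that the paper's own (very short) proof outsources to \cite[Prop.\ VII.1.9]{Ne00}, so it cannot be taken for granted here.

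The gap is fillable without the root dichotomy: since $\fs:=[\fl,\fl]$ is semisimple, $\fu$ is a semisimple $\fs$-module by Weyl's theorem, so $[\fs,\fu]$ lies in the sum of the nontrivial isotypic components, while $\fz(\g)$ lies in the trivial one; hence $[\fs,\fu]\cap\fz(\g)=\{0\}$. As $[\fs,\fu]$ is $\ft$-invariant and you have already shown that the zero $\ft$-weight space of $\fu$ is exactly $\fz(\g)$, it follows that $[\fs,\fu]\subseteq V$, and $[\fz(\fl),V]\subseteq V$ is clear because $\fz(\fl)\subseteq\ft_\fl$. With that repair (or with a citation of \cite[Prop.\ VII.1.9]{Ne00}, as the paper does), the remaining bookkeeping in your proposal --- $\fz_V(\ft_\fl)=\{0\}$, $\fu=\fz(\g)\oplus V$, the $\fl$-invariance of $\beta$ via the Jacobi identity, and the identification of the brackets --- is correct.
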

\begin{proof}
  By Theorem \ref{thm:adm-semprod}, the subalgebra \(\fl\) is reductive and quasihermitian and \(\ft := \fz(\g) + \ft_\fl\) is a compactly embedded Cartan subalgebra of \(\g\).
  Since \([\fl,\fl]\) is a Levi complement of \(\g\), \cite[Prop.\ VII.1.9]{Ne00} therefore implies that \(V\) is \(\fl\)-invariant and that \(\fu = V + \fz(\g)\).
  We also have \(V \cap \fz(\g) = \{0\}\) because \(\fz_\g(\ft) = \ft\).
  Hence, \(\g \cong \g(\fl,V,\fz(\g),\beta)\) in terms of Spindler's construction.
  Note that \(\beta\) can be defined in the above way because we have \([\fu,\fu] \subset \fz(\g)\) by Theorem \ref{thm:spindlers-constr}.
\end{proof}

\begin{lem}
  \label{lem:der-levi-invcomp}
  Let \(\g\) be a real or complex Lie algebra and let \(D \in \der(\g)\) such that
  \[\g = \g_{-1}(D) \oplus \g_0(D) \oplus \g_1(D).\]
  Then there exists a \(D\)-invariant Levi complement in \(\g\).
\end{lem}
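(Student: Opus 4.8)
The plan is to realize the one-parameter group $(e^{tD})_{t\in\R}$ inside a linear algebraic group $G_1$ acting on $\g$ whose unipotent radical acts transitively on the set of Levi complements of $\g$, and then to move a fixed Levi complement into $D$-invariant position using Mostow's conjugacy theorem for maximal reductive subgroups.

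First I would record the standard reductions. Let $\fr := \rad(\g)$ be the solvable radical and $\fn$ the nilradical of $\g$, so $[\g,\fr] \subseteq \fn$; both are characteristic ideals and hence $D$-invariant, and since $\ad_\g x$ is nilpotent for every $x \in \fn$, the set $N := \exp(\ad_\g \fn)$ is a connected unipotent subgroup of $\Aut(\g)$. Because $D$ is semisimple with $\spec(D) \subseteq \{-1,0,1\} \subset \Z$, each $e^{tD}$ acts on $\g_j(D)$ as the scalar $e^{jt}$, so the Zariski closure $A := \oline{\{e^{tD} : t \in \R\}}$ in $\GL(\g)$ is a one-dimensional split torus; as the conditions ``$g\in\Aut(\g)$'', ``$g\fr=\fr$'' and ``$g\fn=\fn$'' are Zariski-closed, $A$ lies in $\Aut(\g)$ and normalizes $N$. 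Finally, a subspace of $\g$ is $D$-invariant if and only if it is $e^{tD}$-invariant for all $t$, if and only if it is $A$-invariant; so it suffices to produce an $A$-invariant Levi complement.

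Next I would form the linear algebraic group $G_1 := A \ltimes N$; its unipotent radical is $N$ and $G_1/N \cong A$. By Levi's theorem a Levi complement $\fs_0 \subseteq \g$ exists, and by Malcev's conjugacy theorem any two Levi complements are conjugate under $e^{\ad x}$ for some $x \in [\g,\fr] \subseteq \fn$, so $N$ acts transitively on the set of Levi complements of $\g$. Hence the stabilizer $P := \{g \in G_1 : g.\fs_0 = \fs_0\}$ satisfies $P\cdot N = G_1$, so $P$ surjects onto the reductive quotient $G_1/N$. Using the results on linear algebraic groups recalled in Appendix~\ref{sec:app-alg-grp} (existence and $N$-conjugacy of maximal reductive subgroups, i.e.\ Mostow's theorem, valid over any field of characteristic zero and hence uniformly for $\K \in \{\R,\C\}$), $P$ then contains a maximal reductive subgroup $A'$ of $G_1$, and $A' = nAn^{-1}$ for some $n \in N$. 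Since $A'$ fixes $\fs_0$, the torus $A$ fixes $n^{-1}.\fs_0$, which is the sought $D$-invariant Levi complement.

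I expect the only genuine subtlety — hence the step to carry out with care — to be the algebraic-geometry bookkeeping: checking that $A$ really is an algebraic torus (this is exactly where the integrality $\spec(D)\subseteq\Z$, and not merely semisimplicity of $D$, is used), that $G_1 = A\ltimes N$ is a bona fide linear algebraic group with unipotent radical $N$, and that a closed subgroup $P$ surjecting onto $G_1/N$ contains a maximal reductive subgroup of $G_1$ (which comes down to $R_u(P) = P\cap N$ together with a dimension count). An alternative, more self-contained route would be induction on $\dim\fr$: choose a minimal $D$-invariant abelian ideal $\fa\subseteq\fr$, apply the statement to $\g/\fa$ with the induced $3$-grading, and split the $D$-equivariant extension $0\to\fa\to\fh\to\fh/\fa\to 0$, where $\fh$ is the $D$-invariant preimage of a $D$-invariant Levi complement of $\g/\fa$; but constructing the equivariant splitting again requires a fixed-point statement for the reductive group $A$ acting on the affine space of splittings (equivalently, vanishing of the relevant Lie algebra cohomology), so this is not really shorter.
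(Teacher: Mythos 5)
Your proof is correct, but it takes a genuinely different route from the paper's. The paper's argument is a two-line reduction: it forms the one-parameter group \(A := (e^{tD})_{t \in \R} \subset \Aut(\g)\), observes that \(\g\) is a semisimple \(A\)-module because \(D\) is diagonalizable, and then invokes \cite[Prop.\ I.2]{KN96}, which directly produces an \(A\)-invariant, hence \(D\)-invariant, Levi complement. In effect you prove by hand exactly what the paper outsources: your passage to the Zariski closure, the algebraic group \(G_1 = A \ltimes N\), Malcev's transitivity of \(N\) on the set of Levi complements, and Mostow's conjugacy of maximal reductive subgroups together constitute a self-contained proof of the special case of that citation needed here. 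The bookkeeping you flag does check out: \(A \cap N = \{1\}\) since \(A\) consists of semisimple and \(N\) of unipotent elements; \(P \cap N\) is automatically connected because unipotent groups in characteristic zero are connected, giving \(R_u(P) = P \cap N\) since \(P/(P\cap N) \cong G_1/N\) is a torus; and \(M \cong A\) forces \(M = nAn^{-1}\) by a dimension and connectedness count. What your approach buys is independence from \cite{KN96} and an explicit view of where the hypotheses enter -- though note that the integrality of \(\spec(D)\) is not actually essential even for your route, since the Zariski closure of a one-parameter group of commuting semisimple elements is a torus in any case (possibly of higher dimension), and the argument goes through verbatim; only diagonalizability of \(D\) is used, which is also all the paper's proof needs. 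What it costs is length. Your alternative inductive sketch is likewise viable but, as you observe, no shorter.
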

\begin{proof}
  Let \(A := (e^{tD})_{t \in \R} \subset \Aut(\g)\) be the one-parameter subgroup generated by \(D\). Then \(\g\) is a semisimple \(A\)-module because \(D\) is diagonalizable. Hence, there exists an \(A\)-invariant Levi complement in \(\g\) by \cite[Prop.\ I.2]{KN96}, which is also \(D\)-invariant.
\end{proof}

The following theorem makes use of algebraic groups and in particular algebraic Lie algebras. We refer to Appendix \ref{sec:app-alg-grp} for an overview of all the facts that are used here.

\begin{thm}
  \label{thm:adm-deriv-3grad-reducompl}
  Let \(\g\) be an admissible Lie algebra and let \(\fu\) be the maximal nilpotent ideal of \(\g\). Let \(D \in \der(\g)\) such that \(\g = \g_{-1}(D) \oplus \g_0(D) \oplus \g_1(D)\). Then there exists a \(D\)-invariant reductive quasihermitian subalgebra \(\fl \subset \g\) such that \(\g = \fu \rtimes \fl\) and \(D(\fz(\fl)) = \{0\}\).
\end{thm}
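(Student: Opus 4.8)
The plan is to reduce the identity $D(\fz(\fl)) = \{0\}$ to a statement that does not mention $\fl$, and then to produce a suitable $\fl$ by a fixed-point argument. First I would show that $D(\rad(\g)) \subseteq \fu$, i.e.\ that $D$ acts trivially on the (abelian, by Theorem~\ref{thm:spindlers-constr}) quotient $\rad(\g)/\fu$. Granting this, let $\fl \subseteq \g$ be \emph{any} $D$-invariant subalgebra with $\g = \fu \rtimes \fl$; it is automatically reductive and quasihermitian by Theorem~\ref{thm:adm-semprod}. Since $\fz(\fl) = \fl \cap \rad(\g)$ is then $D$-invariant and the canonical projection $\rad(\g) \to \rad(\g)/\fu$ restricts to a $D$-equivariant isomorphism $\fz(\fl) \cong \rad(\g)/\fu$, the inclusion $D(\rad(\g)) \subseteq \fu$ forces $D(\fz(\fl)) = \{0\}$. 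Thus it suffices to establish $D(\rad(\g)) \subseteq \fu$ and, independently, to exhibit \emph{one} $D$-invariant complement $\fl$.

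For the inclusion, I would write $\g \cong \g(\fl_0, V, \fz(\g), \beta)$ using Corollary~\ref{cor:adm-semprod-spindler}, so that $\fu = V \oplus \fz(\g)$ with $[\fu,\fu] \subseteq \fz(\g)$, and $\ft_r := \fz(\fl_0)$ maps isomorphically onto $\rad(\g)/\fu$. Each nonzero $t \in \ft_r$ acts on $V$ by a \emph{nonzero} semisimple endomorphism $A_t := \ad(t)|_V$ with purely imaginary spectrum: semisimplicity and the spectrum condition hold by Theorem~\ref{thm:spindlers-constr} (as $\ft_r$ lies in a compactly embedded Cartan subalgebra), and $A_t \neq 0$ because otherwise $\ad(t)$ would vanish on $V$, on $\fz(\g)$, and on $\fl_0$, giving $t \in \fz(\g) \cap \fl_0 = \{0\}$; hence $A_t$ has a nonzero eigenvalue. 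Suppose, for contradiction, that $D$ does not act trivially on $\rad(\g)/\fu$. Since $D$ is semisimple with $\spec(D) \subseteq \{-1,0,1\}$, there are $0 \neq t \in \ft_r$ and $\mu \in \{1,-1\}$ with $Dt - \mu t \in \fu$. Writing, for $v \in V$, $Dv = \phi(v) + \psi(v)$ with $\phi(v) \in V$ and $\psi(v) \in \fz(\g)$ (possible since $D(\fu) \subseteq \fu$), and comparing the $V$-components on the two sides of $D([t,v]) = [Dt, v] + [t, Dv]$ — using $[Dt - \mu t, v] \in [\fu,\fu] \subseteq \fz(\g)$, $[t,\psi(v)] = 0$, and $[t,v] = A_t v$ — one obtains $\phi(A_t v) = \mu A_t v + A_t \phi(v)$ for all $v$, i.e.\ $[\phi, A_t] = \mu A_t$ in $\End(V)$. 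Then $e^{s\phi} A_t e^{-s\phi} = e^{s\mu} A_t$ for every $s \in \R$, so multiplication by $e^{s\mu}$ permutes the finite set $\spec(A_t)$; as $A_t$ has a nonzero eigenvalue, $\mu = 0$, a contradiction. Therefore $D(\rad(\g)) \subseteq \fu$.

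To produce a $D$-invariant complement, fix a $D$-invariant Levi complement $\fs$ by Lemma~\ref{lem:der-levi-invcomp} and set $\fm := \{x \in \rad(\g) : [x,\fs] = \{0\}\}$, a $D$-invariant solvable subalgebra with $\rad(\g) = \fm + \fu$ (by semisimplicity of the $\fs$-action on $\rad(\g)$), so that $\fn_0 := \fm \cap \fu$ is a $D$-invariant nilpotent ideal of $\fm$ with $\fm/\fn_0 \cong \rad(\g)/\fu$ abelian. An abelian subalgebra $\fa$ with $\fm = \fn_0 \oplus \fa$ yields a reductive complement $\fl := \fa \oplus \fs$ of $\fu$ with $\fz(\fl) = \fa$ and $[\fl,\fl] = \fs$, and such an $\fa$ exists (take $\fa = \fz(\fl_0)$ for any reductive complement $\fl_0 \supseteq \fs$). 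The remaining task — where the algebraic-group results of Appendix~\ref{sec:app-alg-grp} are needed — is to choose $\fa$ to be $D$-invariant. The closure $A := \overline{\{e^{sD|_\fm} : s \in \R\}}$ is a reductive (one-dimensional, diagonalizable) subgroup of $\Aut(\fm)$ preserving $\fn_0$; by the structure theory of the algebraic hull of $\ad_\fm(\fm)$ — in particular conjugacy of maximal tori under the unipotent radical, together with the fact that $\ad_\g(\fa)$ is semisimple for the relevant $\fa$ — the set of such ``toral'' abelian complements of $\fn_0$ forms a single orbit of a unipotent group $U$ of automorphisms of $\fm$ that is normalized by $A$; since $A$ is reductive it has a fixed point in this $U$-orbit, i.e.\ a $D$-invariant $\fa$. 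Then $\fl := \fa \oplus \fs$ is $D$-invariant, satisfies $\g = \fu \rtimes \fl$, is reductive and quasihermitian by Theorem~\ref{thm:adm-semprod}, and has $D(\fz(\fl)) = D(\fa) = \{0\}$ by the first step. The main obstacle is precisely this last step: correctly isolating the class of complements on which $A$ acts with a fixed point and verifying the required conjugacy statement, which is the role of the appendix on algebraic groups.
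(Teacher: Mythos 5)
Your argument splits the theorem into (i) the identity $D(\fz(\fl)) = \{0\}$ and (ii) the existence of a $D$-invariant complement, and part (i) is handled by a genuinely different and more elementary route than the paper's. The paper obtains $[D,\fz(\fl')]=\{0\}$ as a byproduct of placing $e^{\R D}$ inside a maximal reductive subgroup of an algebraic hull; you instead prove directly that $D(\rad(\g)) \subseteq \fu$, and your computation is correct: the relation $[\phi, A_t] = \mu A_t$ forces $e^{s\mu}\spec(A_t) = \spec(A_t)$ for all $s$, which is impossible for $\mu = \pm 1$ because $A_t$ is semisimple and nonzero (and you correctly rule out $A_t = 0$ via $\fz(\g) \cap \fl_0 \subseteq \fu \cap \fl_0 = \{0\}$). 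Combined with the observation that $\fz(\fl) = \fl \cap \rad(\g)$ maps $D$-equivariantly and isomorphically onto $\rad(\g)/\fu$, this correctly reduces the theorem to exhibiting a single $D$-invariant complement, and it isolates the convexity input (the purely imaginary, semisimple action of $\fz(\fl_0)$ on $V$) more transparently than the paper does.

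The gap is in part (ii). Your construction of a $D$-invariant $\fa$ rests on the assertion that the ``toral'' abelian complements of $\fn_0$ in $\fm$ form a single orbit of a unipotent group $U \subseteq \Aut(\fm)$ normalized by $A = \overline{e^{\R D\lvert_\fm}}$, on which $A$ then has a fixed point. Neither half of this is established, and establishing it is essentially the entire content of the paper's Steps 1--3. Concretely: to identify toral complements with maximal reductive subgroups, so that Mostow conjugacy applies, one must pass to an algebraic hull, and by Lemma \ref{lem:lie-adm-deriv-alg} the reductive part of $a(\ad \g)$ is in general strictly larger than $\ad(\fa) \oplus \ad(\fs)$ --- it contains the closure $\L(T_r)$ of the torus generated by $e^{\ad \fz(\fl_0)}$ --- so maximal reductive subgroups of the hull do not correspond bijectively to subalgebra complements inside $\fm$; one must intersect back with $\ad(\g)$ and then pull back through $\ad$, dealing with $\ker(\ad) = \fz(\g)$. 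This is exactly why the paper's Step 3 first only obtains $D(\fa) \subseteq \fz(\g)$ and must then correct by a complement of $\fz(\g)$ using semisimplicity of $D$, and why it has to argue separately that the pullback $\fa$ is abelian. You also do not verify that $A$ normalizes $U$, nor that it permutes the set of toral complements. The instinct is right --- it is the same mechanism the paper uses, namely containment of the reductive group generated by $\Ad(S)$ and $e^{\R D}$ in a maximal reductive subgroup of an algebraic group --- but as written the proposal defers the main difficulty to an unproved conjugacy and fixed-point statement, so it does not yet constitute a proof.
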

\begin{proof}
  \textbf{Step 1:} Let \(G\) be a 1-connected Lie group with Lie algebra \(\g\).
  By Lemma \ref{lem:der-levi-invcomp}, there exists a \(D\)-invariant Levi complement \(\fs\) of \(\g\).
  Let \(S \subset G\), resp.\ \(U \subset G\), be the integral subgroup of \(G\) with Lie algebra \(\fs\), resp.\ \(\fu\).
  Moreover, we consider the closed subgroup \(S_D := \Ad(S)e^{\R D} \subset \Aut(\g)\).

  \textbf{Step 2:} The Lie algebra \(\L(S_D) \cong \ad_\g \fs \rtimes \R D\) is algebraic by Lemma \ref{lem:subalg-algebraic-liebrack} because \(\fs\) is perfect and \(\g_D := a(\ad \g) \rtimes \R D \supset \ad \g \rtimes \R D\) is algebraic by Lemma \ref{lem:lie-adm-deriv-alg}. Here, the derivation \(D\) acts on \(\ad \g\) by \(\ad D\). Thus, there exist algebraic subgroups \(F^A\) and \(S_D^A\) of \(\Aut(\g)\) with \(\L(F^A) = \g_D\) and \(\L(S_D^A) = \L(S_D)\). Moreover, \(S_D^A\) is a reductive group with \(S_D^A \subset F^A\) (cf.\ \cite[IV.2 Thm.\ 2.2]{Ho81}). In particular, there exists a maximal reductive subgroup \(F_r^A \supset S_D^A\) of \(F^A\) such that \(F^A = F_u^A \rtimes F_r^A\) is a Levi decomposition of \(F^A\), where \(F_u^A\) denotes the unipotent radical of \(F^A\). Let \(\ff_r := \L(F_r^A)\).

  The Lie algebra \(\L(F_u^A)\) is the maximal nilpotent ideal \(\ad(\fu)\) of \(a(\ad(\g))\) (cf.\ Lemma \ref{lem:lie-adm-deriv-alg}). Therefore, the 1-component of \(F_u^A\) is given by \(\Ad(U)\), so that \(\Ad(G) \subset F^A\) is adapted to the Levi decomposition of \(F^A\) in the sense that
  \[\oline{\Ad(G)} = \Ad(U) \rtimes L', \quad \text{where} \quad L' := (F_r^A \cap \oline{\Ad(G)}),\]
  and the closure is taken in \(\Aut(\g)\).
  On the level of Lie algebras, we obtain a semidirect sum \(\ad \g = \ad(\fu) \rtimes \fl'\), where \(\fl' := \L(L') \cap \ad(\g)\). The subalgebra \(\ad(\g)\) is an ideal in \(\der(\g)\), so that \(\fl' = \ad(\g) \cap \ff_r\) is reductive because it is an ideal of \(\ff_r\). In particular, we have \(\fz(\fl') = \fz(\ff_r) \cap \ad(\g)\).

  The center of \(F_r^A\) commutes with \(e^{\R D} \subset S_D^A \subset F_r^A\), so that we have in particular

  \[e^{t D} \Ad(\exp(x)) e^{-tD} = \Ad(\exp(x)) \quad \text{ for all } x \in \fz(\fl'), t \in \R,\]
  and therefore \([D, e^{\R \ad x}] = \{0\}\), which leads to \([D,\fz(\fl')] = \{0\}\). The commutator algebra \([\fl', \fl']\) is a semisimple subalgebra of \(\g\), so that the maximality of the Levi complement \(\ad \fs \subset \fl'\) leads to \(\fl' = \ad \fs \oplus \fz(\fl')\).

  \textbf{Step 3:} Let \(\fa := \ad_\g^{-1}(\fz(\fl')) \subset \g\). Then
  \[\g = \ad^{-1}(\ad(\fu)) + \ad^{-1}(\fl') = \fu + \fa + \fs.\]
  Moreover, \(\fa\) commutes with \(\fs\): To see this, let \(x \in \fa\). Then \(\ad(x) \in \fz(\fl')\) implies that \([\fa, \fs] \subset \fz(\g)\). But since \([\fs, \fs] = \fs\), we even have \([\fa,\fs] = \{0\}\). Hence, \(\fu + \fa\) is solvable ideal in \(\g\) and \(\g/(\fu + \fa) \cong \fs\) shows that \(\rad(\g) = \fu + \fa\).

  \(\fa\) is also abelian: Recall that \(\ff_r = \L(F_r^A) \subset \der(\g)\) is a reductive algebraic Lie subalgebra. Thus, \(\g\) is a semisimple \(\ff_r\)-module under the canonical action of \(\ff_r\) by derivations. In particular, \(\fa\) is a semisimple \(\ff_r\)-submodule with \(\ff_r.\fa \subset \fz(\g) \subset \fa\) because \(\ad(\fa) = \fz(\fl') \subset \fz(\ff_r)\). Since \(\fz(\g)\) is also a \(\ff_r\)-submodule, there exists a module complement \(\fa' \subset \fa\) such that \(\fa = \fa' \oplus \fz(\g)\) as \(\ff_r\)-submodules. Now
  \[\ff_r.\fa' \subset \fa' \cap \fz(\g) = \{0\} \quad \text{implies that} \quad [\fa,\fa'] = \ad(\fa)(\fa') \subset \ff_r.(\fa') = \{0\},\]
  which shows that \([\fa,\fa] = \{0\}\), i.e.\ that \(\fa\) is abelian.

  By construction of \(\fa\), we have \(D(\fa) \subset \fz(\g)\). Using the fact that \(D\) is semisimple, we can find a vector space complement \(\ft_r\) of \(\fz(\g)\) in \(\fa\) such that \(D(\ft_r) = \{0\}\). Now \(\g\) can be written as a semidirect sum
  \[\g = \fu \rtimes \fl \quad \text{with} \quad \fl := \fs \oplus \ft_r.\]
  \textbf{Step 4:} It remains to conclude that \(\fl\) is quasihermitian and reductive: This follows from Theorem \ref{thm:adm-semprod} because \(\g\) is admissible and \(\g = \fu \rtimes \fl\).
\end{proof}

\section{Semisimple derivations of admissible Lie algebras}
\label{sec:liewedge-adm}

Let \(\g\) be an admissible Lie algebra.
By Theorem \ref{thm:spindlers-constr} and Theorem \ref{thm:spindler-adm}, we then have \(\g = \g(\fl,V,\fz,\beta)\), where \(\fl\) is a reductive quasihermitian Lie algebra, \(V\) is an \(\fl\)-module, and \(\beta: V \times V \rightarrow \fz\) is an \(\fl\)-invariant skew-symmetric bilinear map.
Moreover, there exists \(f \in \fz^*\) such that \((V,f \circ \beta)\) is a symplectic \(\fl\)-module of convex type.
Let \(\rho: \g \rightarrow \sp(V,f \circ \beta)\) be the module homomorphism associated with \((V,f \circ \beta)\).
Then there exists a natural homomorphism
\begin{equation}
  \label{eq:spindler-jacobi-hom}
  \Phi_f: \g(\fl,V,\fz,\beta) \rightarrow \hsp(V,f \circ \beta), \quad (v,z,x) \mapsto (v,f(z),\rho(x)),
\end{equation}
and the invariant closed convex cone \(W_f := \Phi_f^{-1}(\hsp(V,f \circ \beta)_+)\) is generating in \(\g\) (cf.\ \cite[Thm.\ VIII.2.7]{Ne00}).

In this section, we characterize the derivations \(D \in \der(\g)\) with the property that 
\begin{itemize}
  \item \(\g = \g_{-1}(D) \oplus \g_0(D) \oplus \g_1(D)\) and
  \item \(\g_{\pm 1}(D) \cap W_f\) span \(\g_{\pm 1}(D)\).
\end{itemize}

\begin{rem}
  \label{rem:spindler-cones-unitary-rep}
  (a) An alternative way to characterize the cone \(W_f\) is the following: Suppose that \(\g = \g(\fl,V,\fz,\beta)\) in terms of Spindler's construction.
Let \(\ft_\fs\) be a compactly embedded Cartan subalgebra of \(\fs := [\fl,\fl]\).
Then \(\ft := \{0\} \times \fz \times \ft_\fl\), where \(\ft_\fl := \fz(\fl) \oplus \ft_\fs\), is a compactly embedded Cartan subalgebra of \(\g\) (cf.\ Theorem \ref{thm:spindlers-constr}).
In particular, we have \(\g = [\ft,\g] \oplus \ft\) as a vector space.
Let \(f \in \fz^*\) be such that \((V,f \circ \beta)\) is a symplectic \(\fl\)-module of convex type.
Then we can extend \(f\) to a linear functional on \(\g\) by \(0\) using the decomposition of \(\g\) provided by \(\ft\).
The cone \(W_f\) then coincides with the dual cone \(\cO_f^*\), where \(\cO_f \subset \g^*\) is the coadjoint orbit of \(f\) (cf.\ \cite[Prop.\ VIII.2.4]{Ne00}).
The cone \(W_f\) therefore depends on the choice of an extension of \(f \in \fz^*\) to \(\g\), which is in turn determined by the choice of the Spindler data \((\fl,V,\fz,\beta)\).
Note that, while changing the compactly embedded Cartan subalgebra \(\ft_\fs\) of \(\fs\) does change the extension of \(f\) to \(\g\), it does not change the coadjoint orbit \(\cO_f\) because compactly embedded Cartan subalgebras are conjugate under inner automorphisms.

(b) In the introduction, we explained that we are mainly interested in classifying the Lie algebras \(\g(W,\tau,h)\) for those \(W\) which are positive cones of antiunitary representations.
This ties into our consideration of the cones \(W_f\) in the following way: Let \(G\) be a connected Lie group with Lie algebra \(\g\), where \(\g\) is admissible. Let \((U,\cH)\) be a strongly continuous irreducible unitary representation of \(G\) on a complex Hilbert space \(\cH\) such that the kernel of \(U\) is discrete and the positive cone \(C_U \subset \g\) is generating.
Then \((U,\cH)\) is a highest weight representation by \cite[Thm.\ X.3.9]{Ne00} in the sense of \cite[Def.\ X.2.9]{Ne00}. Let \(\ft \subset \g\) be a compactly embedded Cartan subalgebra of \(\g\) and let \(\lambda \in i\ft^* \subset \ft_\C\) be the highest weight of \((U,\cH)\).
Then we have \(C_U = \cO_{-i\lambda}^*\) by \cite[Thm.\ X.4.1]{Ne00}.
Note that \(f := -i\lambda \in \ft^*\) by \cite[Thm.\ VII.2.2]{Ne00}.
Moreover, there exists a positive adapted system \(\Delta^+\) such that \(f\) is contained in the interior of the cone \(C_{\min}^*\).
Let \(\g = \g(\fl,V, \fz, \beta)\) be a description of \(\g\) in terms of Spindler's construction that is adapted to \(\ft\) (cf.\ Theorem \ref{thm:spindlers-constr}). Then we can decompose \(f\) into \(f = f_Z + f_L\), where \(f_Z\) vanishes on \(\ft \cap \fl\) and \(f_L\) vanishes on \(\fz(\g)\). In particular, we have \(f_Z \in (C_{\min, \fz}^*)^o\), so that \((V, f \circ \beta)\) is a symplectic \(\fl\)-module of convex type.
Furthermore, let \(U\) and \(L\) be 1-connected Lie groups with \(\L(U) = \fu\) and \(\L(L) = \fl\). Then we have
\[\cO_f = G.f = \cO_{f_Z} + \cO_{f_L} = (U.f_Z) + (L.f_L)\]
by \cite[Thm.\ VIII.1.3]{Ne00}, so that \(\cO_{f_Z}^* \cap \cO_{f_L}^* \subset \cO_f^*\). From \(\cO_{f_L} = L.f_L\), we derive that \(\cO_{f_L}(\fu) = \{0\}\), so that \(\fu \subset \cO_{f_L}^*\).
Moreover, \(\cO_{f_Z}^* \cap \cO_{f_L}^* \cap \ft\) contains the cone \(C_{\min}\) for some positive adapted system (cf.\ \cite[Thm.\ VIII.1.19]{Ne00}), which implies in particular that \((\cO_{f_Z}^* \cap \cO_{f_L}^*) \cap \fl\) is generating.

This argument shows that, in the context of our classification problem, it suffices to consider the cones \(\cO_{f_Z}^*\), since they ``differ'' from the intersections \(\cO_{f_Z}^* \cap \cO_{f_L}^*\) only by elements in the reductive subalgebra \(\fl\), which can be controlled rather easily using the results from \cite{Oeh20}.
\end{rem}

\begin{rem}
  \label{rem:spindler-jacobi-hom-reducomp}
  The homomorphism \(\Phi_f\) and therefore also the cone \(W_f\) depend on the choice of the reductive complement of the maximal nilpotent ideal.
  Therefore, we need a better understanding of how these cones change if the reductive complement changes.

  Let \(\g\) be an admissible Lie algebra and let \(\g = \fu \rtimes \fl = \fu \rtimes \tilde \fl\), where \(\fu\) is the maximal nilpotent ideal of \(\g\) and \(\fl\) and \(\tilde \fl\) are reductive quasihermitian Lie algebras. By fixing compactly embedded Cartan subalgebras \(\ft_\fs\) and \(\tilde \ft_\fs\) of \([\fl,\fl]\) and \([\tilde \fl,\tilde \fl]\), we obtain compactly embedded Cartan subalgebras
  \[\ft := \fz(\g) + \fz(\fl) + \ft_\fs \quad \text{and} \quad \tilde \ft := \fz(\g) + \fz(\tilde \fl) + \tilde \ft_\fs\]
  of \(\g\) (cf.\ Theorem \ref{thm:adm-semprod}). These subalgebras then determine submodules \(V = [\ft,\fu]\), resp.\ \(\tilde V = [\tilde \ft, \fu]\), for \(\fl\), resp.\ \(\tilde \fl\), acting on \(\fu\) (cf.\ Corollary \ref{cor:adm-semprod-spindler}).

  Let \(\beta := [\cdot, \cdot]\lvert_{V \times V}\) and \(f \in \fz^* := \fz(\g)^*\) be such that \((V, f \circ \beta)\) is a symplectic \(\fl\)-module of convex type.
  Since compactly embedded Cartan subalgebras \(\ft\) are conjugate under inner automorphisms of \(\g\) (cf.\ \cite[Thm.\ VII.1.4]{Ne00}) and \(\ft\)-invariant Levi complements are uniquely determined by \(\ft\) (cf.\ \cite[Prop.\ VII.2.5]{Ne00}), we can conjugate \(\tilde \fl\) by an inner automorphism \(\varphi\) of \(\g\) in such a way that \(\varphi(\tilde \fl) \subset [\fl,\fl] + \ft\).
  This conjugation does not change the invariant cone we obtain from the homomorphism \eqref{eq:spindler-jacobi-hom}. Hence, we assume now that \(\tilde \fl \subset [\fl,\fl] + \ft\). Note that, since \([\tilde \fl, \tilde \fl] = [\fl, \fl]\), the only freedom we have in choosing \(\tilde \fl\) under this assumption is that of a complement of \(\fz(\g)\) in \(\ft \cap \rad(\g)\).
  Let
  \[\Phi_f : \g(\fl, V, \fz, \beta) \rightarrow \hsp(V, f \circ \beta), \quad \text{resp.} \quad \tilde\Phi_f: \g(\tilde \fl, \tilde V, \fz, \beta) \rightarrow \hsp(\tilde V, f \circ \tilde\beta)\]
  be the homomorphisms from \eqref{eq:spindler-jacobi-hom}, where \(\tilde \beta := [\cdot, \cdot]\lvert_{\tilde V \times \tilde V}\).
  Note that the assumption that \(\tilde \fl \subset [\fl,\fl] + \ft\) implies that \(V = \tilde V\).
  The canonical quotient homomorphism \(\psi: [\fl,\fl] + \ft \rightarrow ([\fl,\fl] + \ft)/\fz(\g)\) yields an isomorphism \(\tilde \psi := (\psi\lvert_{\tilde \fl})^{-1} \circ \psi\lvert_\fl\) (cf.\ \cite[Rem.\ V.4]{Ne94a}), which in turn yields an isomorphism
  \[\Psi : \g(\fl, V, \fz, \beta) \rightarrow \g(\tilde \fl, \tilde V, \fz, \tilde \beta), \quad (v, z, x) \mapsto (v, z, \tilde\psi(x))\]
  with \(\Phi_f = \tilde \Phi_f \circ \Psi\).
  In particular, the generating invariant cones \(W_f := \Phi_f^{-1}(\hsp(V, f \circ \beta)_+)\) and \(\tilde W_f := \tilde \Phi_f^{-1}(\hsp(\tilde V, f \circ \tilde \beta)_+)\) are related by \(\tilde W_f = \Psi(W_f)\).
\end{rem}

\begin{rem}
  \label{rem:spindler-assumption-center}
  Let \(\g := \g(\fl,V,\fz,\beta)\) be admissible.
 
  (a) Let \(D \in \der(\g)\) be a diagonalizable derivation with \(D(\fl) \subset \fl\) and \(D(V) \subset V\) (cf.\ Theorem \ref{thm:adm-deriv-3grad-reducompl}). Then \(\fz = \fz(\g)\) and \(\fz_1 := [V,V] \subset \fz\) are also \(D\)-invariant. Since \(D\) is diagonalizable on these subspaces, we can find a \(D\)-invariant complement \(\fz_2 \subset \fz\) of \(\fz_1\). Then \(\g = \g(\fl,V,\fz_1,\beta) \oplus \fz_2\), where \(\beta\) is corestricted to \(\fz_1\), is a direct sum of Lie algebras and \(\g(\fl,V,\fz_1,\beta)\) is admissible. Hence, it is reasonable to only consider those admissible Lie algebras \(\g(\fl,V,\fz,\beta)\) with \([V,V] = \fz\) for our classification problem. It is clear from Spindler's construction that this is equivalent to \(\fz(\g) \subset [\g,\g]\).

  (b) Another assumption we usually make is that \(\fl\) acts \emph{effectively} on \(V\), that is, the ideal
  \[\fl_{\fix, V} := \{X \in \fl : X.V = \{0\}\}\]
  of \(\fl\) vanishes. If this is not the case, then there exists a complementary ideal \(\fl_{\eff, V}\) of \(\fl_{\fix, V}\) in \(\fl\) because \(\fl\) is reductive, and \(\g\) decomposes into the direct sum \(\g = \g(\fl_{\eff, V}, V, \fz, \beta) \oplus \fl_{\fix, V}\). Moreover, \(\fl_{\eff, V}\) acts effectively on \(V\) and \(\fl_{\fix, V}\) is a reductive admissible Lie algebra.
\end{rem}

\begin{lem}
  \label{lem:symp-mod-h1-homom}
  Let \(\g\) be a hermitian simple Lie algebra and let \((V,\Omega)\) be a symplectic \(\g\)-module of convex type. Let \(\rho: \g \rightarrow \sp(V,\Omega)\) be the homomorphism corresponding to the representation of \(\g\). Then the following assertions hold:
  \begin{enumerate}
    \item There exist \(H\)-elements in \(\g\) and \(\sp(V,\Omega)\) such that \(\rho\) is an \((H_1)\)-homomorphism.
    \item Suppose that \(\g\) is of tube type. Let \(h \in \g\) such that \(\g = \g_{-1}(h) \oplus \g_0(h) \oplus \g_1(h)\) and set \(h' := \rho(h)\). Then
      \[\sp(V,\Omega) = \sp(V,\Omega)_{-1}(h') \oplus \sp(V,\Omega)_0(h') \oplus \sp(V,\Omega)_1(h').\]
      In particular, \(\tau := 2\rho(h)\) is an antisymplectic involution on \(V\).
  \end{enumerate}
\end{lem}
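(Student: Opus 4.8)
The plan is to reduce part (a) to the case where $V$ is a simple $\g$-module, and part (b) to the case $\g = \fsl(2,\R)$. For (a), we may assume $V$ is faithful, since otherwise $\g$, being simple, acts trivially, $\rho = 0$, and both assertions are vacuous. By Theorem \ref{thm:sympmodconv-decomp}(b) we write $(V,\Omega) \cong \bigoplus_j (V_j,\Omega_j)^{m_j}$ as an orthogonal direct sum of symplectic $\g$-modules with the $V_j$ simple and pairwise non-isomorphic; then $\End_\g(V) = \bigoplus_j \Mat_{m_j}(\End_\g(V_j))$ is $\#$-invariant blockwise, and any $H$-element of $\sp(V,\Omega)$ is $\tfrac12$ times a compatible complex structure. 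Assembling such structures blockwise, it suffices to produce, for each $j$, an $H$-element $\tilde H_{0,j}$ of $\sp(V_j,\Omega_j)$ with $\rho_j(H_0) - \tilde H_{0,j} \in \End_\g(V_j)^{-\#}$; the direct sum $\tilde H_0 := \bigoplus_j \tilde H_{0,j}^{\oplus m_j}$ is then an $H$-element of $\sp(V,\Omega)$ with $\rho(H_0) - \tilde H_0 \in \End_\g(V)^{-\#}$. So we may assume $V$ simple.

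For simple $V$, fix an $H$-element $H_0$ of $\g$ (Definition \ref{def:h-element}). The set $W_V := \{x \in \g : \Omega(\rho(x)v,v) \ge 0 \text{ for all } v \in V\}$ is a pointed generating $\Inn(\g)$-invariant closed convex cone (generating by the convex-type hypothesis, which supplies an interior point; pointed because $x \in W_V \cap (-W_V)$ forces the quadratic form $v \mapsto \Omega(\rho(x)v,v)$, hence $\rho(x)$, to vanish, hence $x = 0$ by faithfulness). As $\g$ is hermitian simple, after replacing $\Omega$ by $-\Omega$ (equivalently $H_0$ by $-H_0$) we may assume that $H_0$ is an interior point of $W_V$, i.e.\ the Hamiltonian $H_{H_0}$ is positive definite. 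Then $\rho(H_0)$ is an invertible element of $\sp(V,\Omega)$ which, by the argument of Lemma \ref{lem:sympmodconv-spec-skewsym-end} applied with $y = H_0$, is elliptic; hence it has a polar-type decomposition $\rho(H_0) = AI = IA$ with $I$ a compatible complex structure on $(V,\Omega)$, $A$ positive definite and $\#$-symmetric, $[A,I] = 0$, and $A, I$ commuting with $\rho(\fk) = \rho(\ker \ad H_0)$ (indeed with everything that commutes with $\rho(H_0)$). Thus $\tilde H_0 := \tfrac12 I$ is an $H$-element of $\sp(V,\Omega)$ with $\rho(\fk) \subseteq \ker\ad\tilde H_0$, and $\rho$ is an $(H_1)$-homomorphism for $H_0$ and $\tilde H_0$ exactly when $\rho(H_0) - \tilde H_0 = (A - \tfrac12\,\id)I$ centralizes $\rho(\g)$, i.e.\ lies in $\End_\g(V)^{-\#}$. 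This last point I would read off from the classification of simple symplectic modules of convex type of hermitian simple Lie algebras (\cite[Thm.\ III.15]{Ne94a}; cf.\ also \cite[Ch.\ III]{Sa80}): in each case $\rho(H_0)$ has at most two distinct eigenvalue moduli on $V_\C$, of the form $\tfrac12 \pm t$, and the corresponding eigenspaces of $A$ are matched with the commutant $\End_\g(V)^{-\#}$ computed in Proposition \ref{prop:sympmodconv-endo}; when $\End_\g(V) = \R\,\id_V$ this simply forces $A = \tfrac12\,\id$, i.e.\ $2\rho(H_0)$ to be a compatible complex structure.

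For (b), given $h \in \g$ with $\g = \g_{-1}(h) \oplus \g_0(h) \oplus \g_1(h)$, I use that all such $h$ are $\Inn(\g)$-conjugate (beginning of Section \ref{sec:herm-liealg}) and one representative lies in $[H_0,\g]$, so we may pick $H_0$ with $h \in [H_0,\g]$; by Lemma \ref{lem:herm-lie-3grad-h2-homo} there is an $(H_2)$-homomorphism $\kappa : (\fsl(2,\R),\tfrac12 U) \to (\g,H_0)$ with $\kappa(\tfrac12 H) = h$, and we normalize signs as above so that $H_{H_0}$ is positive definite on $V$. Then $(V,\Omega)$, viewed through $\rho\circ\kappa$ as a symplectic $\fsl(2,\R)$-module, is of convex type (witnessed by $\tfrac12 U$, since $\rho(\kappa(\tfrac12 U)) = \rho(H_0)$), so by Example \ref{ex:sympl-mod-conv-sl2} it is symplectically equivalent to a direct sum of copies of the standard representation, under which $2\rho(h) = (\rho\circ\kappa)(H)$ corresponds to $\pmat{\bbone_n & 0 \\ 0 & -\bbone_n}$. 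Hence $\rho(h)$ is diagonalizable over $\R$ with eigenvalues $\pm\tfrac12$ only; the decomposition $V = V_{1/2}(\rho(h)) \oplus V_{-1/2}(\rho(h))$ induces a grading of $\End(V)$ on which $\ad\rho(h)$ has eigenvalues $0,\pm 1$, and restricting to the $\ad\rho(h)$-invariant subalgebra $\sp(V,\Omega)$ gives $\sp(V,\Omega) = \sp(V,\Omega)_{-1}(h') \oplus \sp(V,\Omega)_0(h') \oplus \sp(V,\Omega)_1(h')$. Finally $\tau := 2\rho(h) \in \rho(\g) \subseteq \sp(V,\Omega)$ satisfies $\tau^\# = -\tau$ and $\tau^2 = \id$, so $\Omega(\tau v,\tau w) = \Omega(v,\tau^\#\tau w) = -\Omega(v,w)$, i.e.\ $\tau$ is an antisymplectic involution.

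The routine parts are the two reductions and the bookkeeping with $\#$ and with assembled complex structures. The genuine obstacle is the final step of the second paragraph: determining the spectrum of $\rho(H_0)$ on $V_\C$ for a simple convex-type module; I expect this to require the explicit classification in \cite{Ne94a} (equivalently Satake's analysis of $H_1$-homomorphisms into $\sp$ in \cite[Ch.\ III--IV]{Sa80}) rather than a uniform argument. For hermitian simple Lie algebras of tube type the obstacle disappears, since the $\fsl(2,\R)$-reduction of the third paragraph already identifies $\rho(H_0) = \rho(\kappa(\tfrac12 U))$ with $\tfrac12$ times a compatible complex structure, and then (a) follows directly, for any $\End_\g(V)$.
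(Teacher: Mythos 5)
Your part (b) is essentially the paper's own argument: reduce to an $h$ adapted to a fixed $H$-element by conjugacy, produce the $(H_2)$-homomorphism $\kappa:(\fsl(2,\R),\tfrac{1}{2}U)\to(\g,H_0)$ with $\kappa(\tfrac{1}{2}H)=h$ from Lemma \ref{lem:herm-lie-3grad-h2-homo}, observe that $\rho\circ\kappa$ turns $(V,\Omega)$ into a symplectic $\fsl(2,\R)$-module of convex type because $\rho(H_0)=\rho(\kappa(\tfrac{1}{2}U))$ has (after a sign normalization) positive definite Hamiltonian, and read off from Example \ref{ex:sympl-mod-conv-sl2} that $2\rho(h)$ is conjugate to $\diag(\bbone_n,-\bbone_n)$. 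Your direct verification of the $3$-grading of $\sp(V,\Omega)$ from the eigenvalues $\pm\tfrac{1}{2}$ of $\rho(h)$ on $V$, and of $\Omega(\tau v,\tau w)=-\Omega(v,w)$ from $\tau^{\#}=-\tau$, $\tau^2=\id$, replaces the paper's citations of Satake and of Example \ref{ex:herm-sp}; this is correct and has the mild advantage of not using part (a) at all.

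Part (a) is where the genuine gap sits. The paper does not prove (a); it cites \cite[Thm.\ IV.6]{Ne94a}. You attempt a proof from scratch: the reduction to simple $V$ and the decomposition $\rho(H_0)=AI$ with $I$ a compatible complex structure are reasonable, but the decisive step --- that $(A-\tfrac{1}{2}\id)I$ centralizes $\rho(\g)$, equivalently your assertion that on every simple convex-type module the eigenvalue moduli of $\rho(H_0)$ on $V_\C$ are of the form $\tfrac{1}{2}\pm t$ with eigenspaces matched by $\End_\g(V)^{-\#}$ --- is precisely the content of the cited theorem and is left unverified (``I would read off from the classification''). In particular, that $\End_\g(V)=\R\id_V$ forces $2\rho(H_0)$ to be a compatible complex structure is a nontrivial fact requiring the case-by-case analysis you do not carry out. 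So as a self-contained argument, (a) is incomplete outside the tube-type case; your closing remark that for tube type (a) falls out of the $\fsl(2,\R)$-reduction is correct, and since the paper only invokes (a) inside its proof of (b) --- which your version bypasses --- the gap does not propagate into the rest of the paper, but the lemma as stated still asserts (a) for arbitrary hermitian simple $\g$, and there you must either fall back on the citation, as the paper does, or actually do the classification work.
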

\begin{proof}
  (a) is proven in \cite[Thm.\ IV.6]{Ne94a}.

  (b) We assume now that \(\g\) is of tube type.
  Using (a), we conclude that there exist \(H\)-elements \(H_0 \in \g, H_0' \in \sp(V,\Omega)\) such that \(\rho\) is an \((H_1)\)-homomorphism with respect to these \(H\)-elements.
  We obtain a Cartan decomposition \(\g = \fk \oplus \fp\) by setting \(\fk := \ker(\ad H_0)\) and \(\fp := [H_0,\g]\).
  Since all hyperbolic elements in \(\g\) are conjugate to elements in \(\fp\), we only have to prove (b) for those \(h\) contained in \(\fp\).
  Therefore, let \(h \in \fp\) and suppose that \(\g = \g_{-1}(h) \oplus \g_0(h) \oplus \g_1(h)\).
  By Lemma \ref{lem:herm-lie-3grad-h2-homo}, there exists an \((H_2)\)-homomorphism \(\kappa: (\fsl(2,\R), \frac{1}{2}U) \rightarrow (\g, H_0)\) with \(\kappa(\frac{1}{2}H) = h\).

  Hence, \(\varphi := \rho \circ \kappa : \fsl(2,\R) \rightarrow \sp(V,\Omega)\) is an \((H_1)\)-homomorphism.
  We claim that \(\varphi\) is actually an \((H_2)\)-homomorphism: By \cite[Prop.\ VIII.2.6]{Ne00}, there exists \(X \in \fz(\fk) = \R H_0\) such that \(\Omega(\rho(X)v,v) > 0\) for all \(v \in V \setminus \{0\}\).
  In particular, either \(\kappa(U) \in \fz(\fk)\) or \(-U \in \fz(\fk)\) satisfies this positivity condition, so that \((V,\Omega)\) is a symplectic \(\fsl(2,\R)\)-module of convex type via \(\varphi\).
  The discussion in Example \ref{ex:sympl-mod-conv-sl2} now shows that \(\varphi\) is an \((H_2)\)-homomorphism with respect to \(\frac{1}{2}U\) and \(\frac{1}{2}\varphi(U)\).

  By \cite[Ch.\ III \S1, Cor.\ 1.6]{Sa80}, this implies that \(\varphi(\frac{1}{2}H) = \rho(h) = h'\) induces a 3-grading on \(\g\).
  By identifying \(\sp(V,\Omega)\) with the standard symplectic Lie algebra \(\sp(2n,\R)\), where \(2n = \dim V\), we conclude with Example \ref{ex:herm-sp} that \(\tau = 2h' = 2\rho(h)\) is an antisymplectic involution.
\end{proof}

\subsection{The solvable case}
\label{sec:adm-wedgeclass-solvable}

According to Theorem \ref{thm:spindler-adm}, all solvable Lie algebras \(\g\) containing a pointed generating invariant closed convex cone are of the form \(\g = \g(\fa, V, \fz, \beta)\) in terms of Spindler's construction, where the reductive part \(\fa\) is an abelian Lie algebra. Moreover, \(\fa\) acts faithfully on the symplectic \(\fa\)-module \(V\), so that we may assume that \(\fa \subset \sp(V,\Omega)\) for some symplectic form \(\Omega = f \circ \beta\), where \(f \in \fz^*\).

Before stating the main theorem of this section (Theorem \ref{thm:adm-wedgeclass-solvable}), we discuss the derivations of the (generalized) Heisenberg algebra.

\begin{example}
  \label{ex:heisenberg-liealg-deriv}
  Let \((V,\Omega)\) be a symplectic vector space. The derivations of the Heisenberg algebra \(\g := \heis(V,\Omega) = V \oplus \R\) can be described as follows: Let \(D \in \der(\g)\). Since the center \(\fz(\g)\) of \(\g\) is \(D\)-invariant, we have \(D_\fz := D\lvert_{\fz(\g)} = c\id_\R\) for some \(c \in \R\). The restriction \(D_\fz\) extends to a derivation \(D_c \in \der(\g)\) via
  \[D_c(v,z) := (\tfrac{c}{2} v, cz), \quad v \in V, z \in \fz(\g).\]
  Let \(D_V := D - D_c \in \der(\g)\). Then
  \[D_V(\fz(\g)) = \{0\} \quad \text{and} \quad D_V(v,z) = (D_{V,V}v, D_{V,\fz}v) \quad \text{for } v \in V, z \in \fz(\g),\]
  where \(D_{V,V} \in \sp(V,\Omega)\) and \(D_{V,\fz} = \Omega(w,\cdot) = \ad(w)\) for some \(w \in V\).
\end{example}

\begin{definition}
  \label{def:gen-heis-alg}
  Let \(V,\fz\) be real vector spaces and let \(\beta: V \times V \rightarrow \fz\) be a skew-symmetric non-degenerate bilinear map.

  (a) The Lie algebra \(\heis(V,\beta) := V \oplus \fz\) with the bracket
  \[[(v,z), (v', z')] := (0, \beta(v,v'))\]
  is called a \emph{generalized Heisenberg algebra}.

  (b) Let \(D_V \in \End(V)\) and \(D_\fz \in \End(\fz)\). We say that the pair \((D_V,D_\fz)\) is \emph{\(\beta\)-compatible} if
  \begin{equation}
    \label{eq:deriv-spindler-perfect-Dz}
    D_\fz \beta(v,w) = \beta(D_V v, w) + \beta(v, D_V w) \quad \text{for all } v,w \in V.
  \end{equation}
\end{definition}

\begin{prop}
  \label{prop:deriv-gen-heisenberg}
  Let \(V\) and \(\fz\) be real vector spaces and let \(\beta: V \times V \rightarrow \fz\) be a skew-symmetric non-degenerate bilinear map. Let \(\g := \heis(V,\beta)\) be the corresponding generalized Heisenberg algebra and let \(D\) be a derivation of \(\g\). Then there exist linear maps \(D_V \in \End(V), D_{V,\fz} \in \Hom_\R(V,\fz)\), and \(D_\fz \in \End(\fz)\) such that the following assertions hold:
  \begin{enumerate}
    \item \(D(v,z) = (D_V v, D_{V,\fz}(v) + D_\fz z)\) for all \(v \in V, z \in \fz\).
    \item For \(D_\fz := D\lvert_\fz\), the pair \((D_V,D_\fz)\) is \(\beta\)-compatible. In particular, the map \((v,z) \mapsto (D_V v, D_\fz z)\) defines a derivation of \(\g\).
    \item The adjoint representation of \(D_V\) on \(\End(V)\) leaves the Lie subalgebra \(\sp(V,\beta)\) invariant.
  \end{enumerate}
\end{prop}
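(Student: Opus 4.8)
The plan is to recover the claimed block form of $D$ from the center of $\g$, to read off $\beta$-compatibility from the one nontrivial bracket identity, and then to check invariance of $\sp(V,\beta)$ by a direct computation.

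For (a): since $\beta$ is non-degenerate, $\fz(\g) = \{0\}\times\fz$, which I identify with $\fz$; this is a characteristic ideal, hence $D$-invariant, so $D_\fz := D\lvert_\fz \in \End(\fz)$ is well defined. The quotient $\g/\fz(\g)$ is abelian and is canonically identified with $V$ via $v \mapsto (v,0)+\fz(\g)$, so $D$ induces a linear map $D_V \in \End(V)$. Writing $D_{V,\fz}(v)$ for the $\fz$-component of $D(v,0)$ gives a linear map $D_{V,\fz}\in\Hom_\R(V,\fz)$, and decomposing $(v,z)=(v,0)+(0,z)$ and using linearity of $D$ yields $D(v,z)=(D_V v,\, D_{V,\fz}(v)+D_\fz z)$.

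For (b): I would apply the Leibniz rule to $[(v,0),(w,0)] = (0,\beta(v,w))$. The left-hand side is $D(0,\beta(v,w)) = (0,D_\fz\beta(v,w))$; since the bracket only involves $V$-components, the right-hand side is $(0,\beta(D_V v,w)+\beta(v,D_V w))$. Comparing $\fz$-components gives exactly \eqref{eq:deriv-spindler-perfect-Dz}, i.e.\ $(D_V,D_\fz)$ is $\beta$-compatible. The very same computation, now read with $(v,z)\mapsto(D_V v,D_\fz z)$ in place of $D$, shows that this map satisfies the Leibniz rule on all brackets (which land in $\fz$), so it is a derivation of $\g$.

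For (c): fix $x\in\sp(V,\beta)$, so that $\beta(xu,w)=-\beta(u,xw)$ for all $u,w\in V$, and set $[D_V,x]=D_Vx-xD_V$. I would expand $\beta([D_V,x]v,w)+\beta(v,[D_V,x]w)$ and substitute the compatibility relation in the form $\beta(D_V u,w)=D_\fz\beta(u,w)-\beta(u,D_V w)$; using skew-symmetry of $\beta$ together with the defining relation of $\sp(V,\beta)$, the $D_\fz$-terms cancel against each other and the remaining terms cancel in pairs, leaving $0$. Hence $[D_V,x]\in\sp(V,\beta)$. The whole proposition is essentially bookkeeping; the only mildly delicate point is ordering the cancellations in (c) correctly, applying both $\beta$-compatibility and the defining relation of $\sp(V,\beta)$ in the right order, so I do not expect a genuine obstacle.
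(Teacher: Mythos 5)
Your proposal is correct and follows essentially the same route as the paper: read off the block form of $D$ from the $D$-invariance of $\fz(\g)=\{0\}\times\fz$, obtain $\beta$-compatibility by applying the Leibniz rule to $[(v,0),(w,0)]=(0,\beta(v,w))$, and verify (c) by the direct computation in which the $D_\fz$-terms cancel via the defining relation of $\sp(V,\beta)$ and the remaining terms assemble into $-\beta(v,[D_V,x]w)$. The extra details you supply (the identification of $\fz(\g)$ and the explicit check that $(v,z)\mapsto(D_Vv,D_\fz z)$ is a derivation) are correct and only make explicit what the paper leaves implicit.
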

\begin{proof}
  Let \(D \in \der(\g)\). Then \(D\) leaves \(\fz = \fz(\g)\) invariant, so that \(D\) can be decomposed as in (a). Since \(D\) is a derivation, we have
  \[D_\fz \beta(v,w) = D[v,w] = [Dv, w] + [v,Dw] = \beta(D_V v, w) + \beta(v, D_V w)\quad \text{for all } v,w \in V,\]
  which proves (b).

  Let \(x \in \sp(V,\beta)\). Then we have
  \begin{align*}
    \beta([D_V, x]v,w) &= \beta(D_V x v, w) - \beta(xD_V v, w) = D_\fz \beta(xv,w) - \beta(xv, D_V w) + \beta(D_V v, xw) \\
                       &= D_\fz\beta(xv,w) + \beta(v,xD_Vw) + D_\fz \beta(v, xw) - \beta(v,D_V x w) \\
                       &= \beta(v,xD_V w) - \beta(v,D_V xw) = -\beta(v,[D_V,x]w)
  \end{align*}
  for all \(v,w \in V\) and therefore \([D_V,x] \in \sp(V,\beta)\). This proves (c).
\end{proof}

\begin{lem}
  \label{lem:adm-cone-int-u}
  Let \(\g\) be an admissible Lie algebra, let \(\fu \subset \g\) be the maximal nilpotent ideal of \(\g\), and let \(W \subset \g\) be a generating invariant closed convex cone with \(H(W) \subset \fz(\g)\). Then \(W \cap \fu \subset \fz(\g)\).
\end{lem}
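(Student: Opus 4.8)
The plan is to dispose first of the case where $W$ is pointed (where the almost-abelian structure of $\fu$ does everything), and then to reduce the general statement to that case by dividing out the edge $H(W)$.

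\emph{Setup.} I would begin by recalling, via Spindler's construction (Theorem~\ref{thm:spindlers-constr}, Corollary~\ref{cor:adm-semprod-spindler}), that $\g\cong\g(\fl,V,\fz(\g),\beta)$, so that $\fu = V + \fz(\g)$ with $V\cap\fz(\g) = \{0\}$ and $[\fu,\fu]\subseteq\fz(\g)$, where $\ft_\fl\subseteq\fl$ is a compactly embedded Cartan subalgebra with $\fz_V(\ft_\fl) = \{0\}$; by Theorem~\ref{thm:spindler-adm} there is some $f\in\fz(\g)^*$ for which $\Omega := f\circ\beta$ is a symplectic, in particular non-degenerate, form on $V$.

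\emph{The pointed case $H(W) = \{0\}$.} Let $y = v + z\in W\cap\fu$ with $v\in V$, $z\in\fz(\g)$; I claim $v = 0$. For $u\in V$ one has $[u,z] = 0$ and $[u,v] = \beta(u,v)\in\fz(\g)$, hence $(\ad u)^2 y = 0$ and $e^{\ad u}y = y + \beta(u,v)$. Thus $W$ contains the affine subspace $y + \beta(V,v)$. Since a closed convex cone containing an affine line through one of its points contains the direction of that line in its edge (consider $\tfrac1n(y+nw)\to w$ and $\tfrac1n(y-nw)\to -w$ for $w\in\beta(V,v)$), we get $\beta(V,v)\subseteq H(W) = \{0\}$; non-degeneracy of $\Omega = f\circ\beta$ then forces $v = 0$, i.e.\ $y\in\fz(\g)$.

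\emph{Reduction to the pointed case.} The edge $H(W) = W\cap(-W)$ is an $\Inn(\g)$-invariant linear subspace, hence an ideal, and $H(W)\subseteq\fz(\g)$ by hypothesis. Passing to $\overline\g := \g/H(W)$ with quotient map $q$, the cone $\overline W := q(W)$ is pointed (a quotient by the lineality space), generating, invariant and closed, so $\overline\g$ is admissible, and its maximal nilpotent ideal is $\overline\fu := q(\fu) = \fu/H(W)$, because the $q$-preimage of any nilpotent ideal of $\overline\g$ is a central extension of a nilpotent algebra, hence nilpotent, hence contained in $\fu$. Applying the pointed case to $(\overline\g,\overline W)$ gives $q(W\cap\fu)\subseteq \overline W\cap\overline\fu\subseteq\fz(\overline\g)$, i.e.\ $[y,\g]\subseteq H(W)$ for each $y\in W\cap\fu$. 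Writing $y = v + z$ as above, one has $[y,\g] = \{x.v : x\in\fl\} + \beta(V,v)$, so $\{x.v : x\in\fl\}\subseteq V\cap H(W)\subseteq V\cap\fz(\g) = \{0\}$; therefore $v\in\fz_V(\fl)\subseteq\fz_V(\ft_\fl) = \{0\}$, and $y\in\fz(\g)$, as required.

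The main obstacle — and the reason the one-line $\exp(\fu)$-argument does not by itself finish the proof — is that $[\fu,\fu]\subseteq\fz(\g)$ makes that argument produce only \emph{central} affine subspaces inside $W$, which carries no information while $H(W)$ may be nonzero; one genuinely needs the passage to $\overline\g$, and must also handle the fact that $\fz(\overline\g)$ can be strictly larger than $q(\fz(\g))$ — the last step shows this is harmless precisely because $V$ has no nonzero $\ft_\fl$-fixed vectors.
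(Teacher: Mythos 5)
Your proof is correct, but both halves run along genuinely different lines from the paper's. For the pointed case the paper spans $W\cap\fu$ to obtain a nilpotent ideal $\fa$ of $\g$ generated by a pointed invariant cone, then quotes [Ne00, Ex.\ VII.3.21] to conclude that $\fa$ is abelian and [Ne00, Prop.\ VII.2.23] to conclude $\fa\subset\fz(\g)$; your explicit computation $e^{\ad u}y=y+\beta(u,v)$ followed by the non-degeneracy of $f\circ\beta$ makes the same underlying mechanism (unipotent orbits sweep out affine lines, which a pointed cone cannot contain) visible and self-contained, at the price of invoking the Spindler coordinates $\fu=V\oplus\fz(\g)$ explicitly. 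For the reduction to the pointed case the paper uses the Factorization Theorem for invariant cones, $W=W_1+H(W)$ with $W_1$ pointed generating invariant [Ne00, Prop.\ VIII.1.38], and then the elementary identity $W\cap\fu=(W_1\cap\fu)+(H(W)\cap\fu)$, valid because $H(W)\subset\fz(\g)\subset\fu$; you instead pass to $\g/H(W)$, which obliges you to re-verify admissibility of the quotient, to identify its maximal nilpotent ideal, and to absorb the possible enlargement of the centre under the quotient map --- all of which you do correctly, the last point via $\fz_V(\ft_\fl)=\{0\}$. The two reductions are essentially dual to one another; the factorization route is shorter given the cited machinery, while yours avoids that citation at the cost of the extra verifications, which you rightly flag as the genuinely non-formal part of the quotient approach.
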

\begin{proof}
  We first consider the case where \(H(W) = \{0\}\), i.e.\ \(W\) is pointed. Let \(\fa := \spann(W \cap \fu)\). Then \(\fa\) is a nilpotent ideal of \(\fu\) which is generated by the pointed invariant cone \(W \cap \fu\). Therefore, it is abelian by \cite[Ex.\ VII.3.21]{Ne00}. Moreover, \(\fa\) is an ideal of \(\g\) because \(\fu\) is an ideal of \(\g\) and \(W\) is \(\Inn(\g)\)-invariant. Thus, we have \(\fa \subset \fz(\g)\) by \cite[Prop.\ VII.2.23]{Ne00}.

  For a general generating invariant closed convex cone \(W \subset \g\) with \(H(W) \subset \fz(\g)\), we can apply the Factorization Theorem for Invariant Cones (cf.\ \cite[Prop.\ VIII.1.38]{Ne00}) to decompose \(W\) into \(W = W_1 + H(W)\), where \(W_1 \subset \g\) is a pointed generating invariant closed convex cone. Now \(H(W) \subset \fz(\g) \subset \fu\) and the previous argument imply that \(W \cap \fu = (W_1 \cap \fu) + (H(W) \cap \fu) \subset \fz(\g)\).
\end{proof}

The following theorem addresses the solvable case of our classification problem stated in the beginning of Section \ref{sec:liewedge-adm}: It says that, for a solvable admissible Lie algebra \(\g\) and a cone \(W_f\) specified by Spindler's construction, there exists no non-trivial derivation \(D\) of \(\g\) that induces a 3-grading on \(\g\) such that the \((\pm 1)\)-eigenspaces of \(D\) are generated by their intersection with \(W_f\).
Note that the theorem below even shows that this holds for the more general class of cones \(W \subset \g\) with \(H(W) \subset \fz(\g)\) and pointed cones in particular. For the cones \(W_f\), we have \(H(W_f) = \ker f \subset \fz(\g)\).

\begin{thm}
  \label{thm:adm-wedgeclass-solvable}
  Let \(\g\) be a solvable admissible Lie algebra with \(\fz(\g) \subset [\g,\g]\) and let \(W \subset \g\) be a generating invariant closed convex cone with \(H(W) \subset \fz(\g)\). Then there exists no non-zero derivation \(D \in \der(\g)\) such that \(\g = \g_{-1}(D) \oplus \g_0(D) \oplus \g_1(D)\) and \(\spann(\g_{\pm 1}(D) \cap W) = \g_{\pm 1}(D)\).
\end{thm}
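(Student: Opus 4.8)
The plan is to argue by contradiction: assume $D \neq 0$ and deduce $D = 0$. The first step is to put $\g$ into a form adapted to $D$. Since $\g = \g_{-1}(D) \oplus \g_0(D) \oplus \g_1(D)$, the derivation $D$ is semisimple, so Theorem \ref{thm:adm-deriv-3grad-reducompl} supplies a $D$-invariant reductive quasihermitian complement $\fl$ of the maximal nilpotent ideal $\fu$ with $D(\fz(\fl)) = \{0\}$. Because $\g$ is solvable, $\fl \cong \g/\fu$ is a solvable reductive Lie algebra, hence abelian, so $\fz(\fl) = \fl$ and therefore $D|_\fl = 0$. Writing $\fa := \fl$ (which is compactly embedded in $\g$ by Theorem \ref{thm:adm-semprod}) and $V := [\fa,\fu]$, Corollary \ref{cor:adm-semprod-spindler} presents $\g$ as $\g(\fa, V, \fz(\g), \beta)$ with $\beta = [\cdot,\cdot]|_{V\times V}$, $\fu = V \oplus \fz(\g)$ and $V \cap \fz(\g) = \{0\}$. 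The ideals $\fu$ and $\fz(\g)$ are characteristic, hence $D$-invariant, and $D([\fa,\fu]) \subseteq [\fa, D(\fu)] \subseteq [\fa,\fu] = V$ since $D$ kills $\fa$; so after this reduction $\g = V \oplus \fz(\g) \oplus \fa$ is a $D$-invariant splitting with $D|_\fa = 0$.

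Next I would locate the $\pm 1$-eigenspaces. From $D|_\fa = 0$ we get $\fa \subseteq \g_0(D)$, hence $\g_{\pm 1}(D) \subseteq \fu$. Now I would invoke Lemma \ref{lem:adm-cone-int-u}: since $H(W) \subseteq \fz(\g)$, it gives $W \cap \fu \subseteq \fz(\g)$, so $\g_{\pm 1}(D) \cap W \subseteq \fz(\g)$; by the spanning hypothesis $\g_{\pm 1}(D) = \spann(\g_{\pm 1}(D) \cap W) \subseteq \fz(\g)$. Consequently $\im D = \g_1(D) \oplus \g_{-1}(D) \subseteq \fz(\g)$. Combining this with $D(V) \subseteq V$ and $V \cap \fz(\g) = \{0\}$ forces $D|_V = 0$.

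Finally I would exploit the hypothesis $\fz(\g) \subseteq [\g,\g]$, which (as recorded in Remark \ref{rem:spindler-assumption-center}(a)) amounts to $[V,V] = \fz(\g)$: indeed $[\g,\g] = V \oplus [V,V]$ with $[V,V] \subseteq \fz(\g)$ and $V \cap \fz(\g) = \{0\}$, so $\fz(\g) \subseteq [\g,\g]$ yields $\fz(\g) \subseteq [V,V]$. Thus $\fz(\g)$ is spanned by the brackets $[v,w] = \beta(v,w)$, $v,w \in V$, and the Leibniz rule $D\beta(v,w) = \beta(Dv,w) + \beta(v,Dw)$ together with $D|_V = 0$ gives $D|_{\fz(\g)} = 0$. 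Since also $D|_\fa = 0$, we conclude $D = 0$ on $\g = V \oplus \fz(\g) \oplus \fa$, contradicting $D \neq 0$.

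The only genuine obstacle is the very first step — producing a $D$-invariant Spindler decomposition on which $D$ annihilates the reductive part. This is exactly the content of Theorem \ref{thm:adm-deriv-3grad-reducompl}, which rests on the algebraic-group input collected in the appendix; once it is available, the remaining steps are elementary bookkeeping with the eigenspaces of $D$, Lemma \ref{lem:adm-cone-int-u}, and the bilinear map $\beta$. A secondary point to handle carefully is the bookkeeping that identifies $[V,V]$ with $\fz(\g)$ under the assumption $\fz(\g) \subseteq [\g,\g]$, but this is routine given the direct-sum structure $\fu = V \oplus \fz(\g)$.
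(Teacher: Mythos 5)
Your proposal is correct and follows essentially the same route as the paper's proof: Theorem \ref{thm:adm-deriv-3grad-reducompl} gives the $D$-invariant splitting with $D$ annihilating the (abelian) reductive part, Lemma \ref{lem:adm-cone-int-u} forces $\g_{\pm 1}(D)\subset\fz(\g)$ and hence $V=[\fa,\fu]\subset\g_0(D)$, and the identity $\fz(\g)=[V,V]$ together with the Leibniz rule kills $D$ on the center. Your intermediate step via $\im D\subset\fz(\g)$ and $V\cap\fz(\g)=\{0\}$ is just a slightly more explicit phrasing of the paper's ``$[\fa,\fu]\subset\g_0(D)$''.
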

\begin{proof}
  Suppose that \(D \in \der(\g)\) is a derivation with \(\g = \g_{-1}(D) \oplus \g_0(D) \oplus \g_1(D)\) and that we have \(\spann(\g_{\pm 1}(D) \cap W) = \g_{\pm 1}(D)\).
  By Theorem \ref{thm:adm-deriv-3grad-reducompl}, there exists a reductive subalgebra \(\fa\) of \(\g\) such that \(\g = \fu \rtimes \fa\) and \(D(\fz(\fa)) = \{0\}\).
  The subalgebra \(\fa\) is abelian because \(\g\) is solvable, so that \(D(\fa) = \{0\}\).
  By Lemma \ref{lem:adm-semprod-solv}, the subalgebra \(\fa\) is compactly embedded in \(\g\), and by Corollary \ref{cor:adm-semprod-spindler}, we have \(\fu = \fz(\g) \rtimes [\fa, \fu]\).
  Since \(\g_{\pm 1}(D) \subset \fu\) is generated by \(\g_{\pm 1} \cap W \subset \fu \cap W\) by assumption, Lemma \ref{lem:adm-cone-int-u} implies \(\g_{\pm 1}(D) \subset \fz(\g)\).
  Hence, we have \([\fa,\fu] \subset \g_0(D)\), which leads to
  \[D\fz(\g) = D[[\fa,\fu], [\fa,\fu]] = \{0\}\]
  and therefore \(D = 0\).
\end{proof}

\subsection{The general case}
\label{sec:deriv-adm-gen-case}

Since the Jacobi algebra \(\hsp(V,\Omega) = \heis(V,\Omega) \rtimes \sp(V,\Omega)\) of a symplectic vector space \((V,\Omega)\) (cf.\ Example \ref{ex:jacobi-alg}) is a prototypical example of a non-solvable, non-reductive Lie algebra, we study this example first. Lemma \ref{lem:der-levi-invcomp} shows that it suffices to consider those derivations \(D \in \der(\hsp(V,\Omega))\) that leave \(\sp(V,\Omega)\) invariant because all Levi complements are conjugate under inner automorphisms of \([\hsp(V,\Omega),\rad(\hsp(V,\Omega))]\) (cf.\ \cite[Thm.\ 5.6.13]{HN12}).

The simplest non-trivial example of an outer derivation of \(\hsp(V,\Omega)\) is given by
 \[D(v,z,x) := (v,2z,0) \quad \text{for } (v,z,x) \in V \times \R \times \sp(V,\Omega)\]
 (cf.\ Example \ref{ex:heisenberg-liealg-deriv}). We call the extension of \(\hsp(V,\Omega)\) by this derivation the \emph{conformal Jacobi algebra} \(\hcsp(V,\Omega) := \hsp(V,\Omega) \rtimes \R \id_V\). The following lemma shows that all derivations of the Jacobi algebra are restrictions of inner derivations of \(\hcsp(V,\Omega)\) to \(\hsp(V,\Omega)\).

For a subalgebra \(\fh \subset \g\) of a Lie algebra \(\g\), we define
\[\der(\g,\fh) := \{D \in \der(\g) : D(\fh) \subset \fh\}.\]

\begin{prop}
  \label{prop:jacobi-deriv}
  Every derivation of the Jacobi algebra \(\g := \hsp(V,\Omega) = \heis(V,\Omega) \rtimes \sp(V,\Omega)\) is induced by an element of the conformal Jacobi algebra \(\hcsp(V,\Omega)\). More precisely, for each \(D \in \der(\hsp(V,\Omega),\sp(V,\Omega))\) there exists \(c \in \R\) and \(h \in \sp(V,\Omega)\) such that
  \[D(v,z,x) = (cv + hv, 2cz, \ad(h)(x)), \quad (v,z,x) \in \hsp(V,\Omega).\]
\end{prop}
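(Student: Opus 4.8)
The strategy is to analyze a derivation $D \in \der(\hsp(V,\Omega),\sp(V,\Omega))$ piece by piece, using that $D$ preserves the characteristic ideals of $\g = \heis(V,\Omega) \rtimes \sp(V,\Omega)$, and then to subtract off inner derivations coming from $\hcsp(V,\Omega)$ until nothing remains. First I would record the characteristic subalgebras: $\fz(\g) = \R$ (the center of the Heisenberg part), the maximal nilpotent ideal $\fu = \heis(V,\Omega) = V \oplus \R$, and hence $V = [\fu,\fu]^{\perp}$-type complement — more precisely $V$ is not characteristic, but $\fu$ and $\fz(\g)$ are, and by hypothesis $D(\sp(V,\Omega)) \subset \sp(V,\Omega)$. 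So $D$ restricts to a derivation $D_{\sp}$ of the simple Lie algebra $\sp(V,\Omega)$ (for $\dim V \geq 2$; the case $V = 0$ is trivial and small cases can be checked directly), and since $\sp(V,\Omega)$ is semisimple, $D_{\sp} = \ad(h)$ for a unique $h \in \sp(V,\Omega)$. Replacing $D$ by $D - \ad(h)$ — note $\ad(h)$ for $h \in \sp(V,\Omega) \subset \g$ is the inner derivation $(v,z,x) \mapsto (hv, 0, [h,x])$ — we may assume $D$ vanishes on $\sp(V,\Omega)$.

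Next I would exploit that $D$ vanishes on $\sp(V,\Omega)$ together with the derivation property to pin down $D$ on $\fu$. Write $D(v,0,0) = (D_V v, D_{V,\fz}(v), \varphi(v))$ with $D_V \in \End(V)$, $D_{V,\fz} \in \Hom(V,\R)$, $\varphi \in \Hom(V,\sp(V,\Omega))$. Applying $D$ to $[(0,0,x),(v,0,0)] = (xv,0,0)$ for $x \in \sp(V,\Omega)$ and using $Dx = 0$ gives $D_V(xv) = x D_V v$ (and similarly $\varphi(xv) = [x,\varphi(v)] + \cdots$), i.e. $D_V \in \End_{\sp(V,\Omega)}(V)$. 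By Proposition~\ref{prop:sympmodconv-endo}(b), applied to the simple symplectic $\sp(V,\Omega)$-module $V$ of convex type, $\End_{\sp(V,\Omega)}(V) = \R\,\id_V$ — here one must note that $V$ as an $\sp(V,\Omega)$-module is simple and of convex type (it is the defining module, and $J + c$ lies in the interior of $\hsp(V,\Omega)_+$ for an $H$-element $J$, cf.\ Example~\ref{ex:jacobi-alg}), so the endomorphism algebra is exactly $\R$. Hence $D_V = c\,\id_V$ for some $c \in \R$. The same $\sp(V,\Omega)$-equivariance forces $\varphi: V \to \sp(V,\Omega)$ to be a module map into the adjoint module; since $V$ (defining, highest weight a single fundamental weight) and $\sp(V,\Omega)$ (adjoint) are inequivalent $\sp(V,\Omega)$-modules for all relevant $\dim V$, Schur gives $\varphi = 0$. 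Likewise $D_{V,\fz}: V \to \R$ must be $\sp(V,\Omega)$-equivariant into the trivial module, and since $V$ has no trivial submodule, $D_{V,\fz} = 0$. So $D(v,0,0) = (cv, 0, 0)$.

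Finally I would determine $D$ on the center $\fz(\g) = \R$. Since $\fz(\g)$ is $D$-invariant, $D(0,z,0) = (0, \lambda z, 0)$ for some $\lambda \in \R$. Applying $D$ to the bracket relation $[(v,0,0),(w,0,0)] = (0, \Omega(v,w), 0)$ and using the values just computed yields $\lambda\,\Omega(v,w) = \Omega(cv,w) + \Omega(v,cw) = 2c\,\Omega(v,w)$, so $\lambda = 2c$ (using that $\Omega$ is non-degenerate, hence nonzero for $V \neq 0$). Putting the pieces together, $D(v,z,x) = (cv, 2cz, 0)$, which is precisely $c$ times the derivation defining $\hcsp(V,\Omega)$; restoring the earlier subtraction gives $D(v,z,x) = (cv + hv, 2cz, \ad(h)(x))$ as claimed. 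I expect the only real subtlety to be the representation-theoretic inputs — confirming that $V$ is a simple $\sp(V,\Omega)$-module of convex type so that Proposition~\ref{prop:sympmodconv-endo}(b) applies and gives $\End_{\sp(V,\Omega)}(V) = \R\,\id_V$, and checking that $\Hom_{\sp(V,\Omega)}(V, \sp(V,\Omega)) = 0$ — plus dispatching the degenerate low-dimensional cases ($\dim V = 0$, and $\sp(2,\R) \cong \fsl(2,\R)$ where one should double-check the adjoint module is not the defining one, which it is not since $\dim \sp(2,\R) = 3 \neq 2$). The bracket bookkeeping in the last two paragraphs is routine.
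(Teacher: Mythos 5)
Your proof is correct and follows essentially the same route as the paper: peel off the inner derivation $\ad_\g(h)$ coming from the simple ideal $\sp(V,\Omega)$, then use $\sp(V,\Omega)$-equivariance on the Heisenberg part to reduce the remainder to the scalar derivation $(v,z,x)\mapsto(cv,2cz,0)$. The one point to tighten is the appeal to Proposition \ref{prop:sympmodconv-endo}(b) for $\End_{\sp(V,\Omega)}(V)=\R\,\id_V$: that proposition only yields $\R$, $\C$ or $\H$ (with $\#$-fixed part $\R\,\id_V$), so you should instead invoke the absolute irreducibility of the defining module of $\sp(V,\Omega)$ (or argue as the paper does via Example \ref{ex:heisenberg-liealg-deriv}, which places the $V$-component of a derivation annihilating the centre inside $\sp(V,\Omega)$ itself, so that the commuting element lies in $\fz(\sp(V,\Omega))=\{0\}$).
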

\begin{proof}
  Let \(\fs := \sp(V,\Omega)\) and \(D \in \der(\g, \fs)\). Since \(\fs \subset \g\) is simple and \(D\)-invariant, there exists \(h \in \fs\) such that \(D\lvert_{\fs} = \ad_{\fs}(h)\). We define \(D_h := \ad_{\g}(h)\). We have \(D\lvert_{\fz(\g)} = 2c \id_{\fz(\g)}\) for some \(c \in \R\). Denote by \(D_c \in \der(\g)\) the extension of \(D\lvert_{\fz(\g)}\) to \(\g\) by
  \[D_c(v,z,x) := (cv, 2cz, 0) \quad \text{for } (v,z,x) \in \g.\] 
  Since \(\fs\) and the maximal nilpotent ideal \(\fu := \heis(V,\Omega)\) are \(D\)-invariant, the space \(V = [\fs, \fu]\) is also \(D\)-invariant. Let \(\tilde D := D - D_c - D_h\). Then \(\tilde D(\fz(\g)) = \tilde D(\fs) = \{0\}\) and by Example \ref{ex:heisenberg-liealg-deriv} we have
  \[\tilde D(v,0,0) = (D - D_c - D_h)(v,0,0) = (yv, 0, 0) \quad \text{for } v \in V\]
  and some \(y \in \sp(V,\Omega)\). Hence, we have for all \(x \in \sp(V,\Omega)\) and \(v \in V\) that
  \[(yxv, 0, 0) = \tilde D(xv,0,0) = \tilde D[(0,0,x), (v, 0, 0)] = [(0,0,x), (yv, 0, 0)] = (xyv,0,0),\]
  which shows that \(y \in \fz(\fs) = \{0\}\). This implies that \(D = D_h + D_c\). 
\end{proof}

\begin{cor}
  \label{cor:jacobi-deriv-3grad}
  The derivations \(0 \neq D \in \der(\hsp(V,\Omega), \sp(V,\Omega))\) satisfying \(\g := \hsp(V,\Omega) = \g_{-1}(D) \oplus \g_0(D) \oplus \g_1(D)\) are exactly the linear maps of the form
  \begin{equation}
  \label{eq:jacobi-deriv-3grad}
    D(v,z,x) := (cv + \tfrac{1}{2}\tau_V v, 2cz, \tfrac{1}{2}[\tau_V, x]), \quad v \in V, z \in \R, x \in \sp(V,\Omega),
  \end{equation}
  where \(\tau_V\) is an antisymplectic involution on \(V\) and \(c \in \{\pm \frac{1}{2}\}\).
\end{cor}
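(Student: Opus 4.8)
The plan is to bootstrap off Proposition \ref{prop:jacobi-deriv} and the description of $3$-grading elements of $\sp(2n,\R)$ in Example \ref{ex:herm-sp}, and then simply read off which eigenvalues are admissible. By Proposition \ref{prop:jacobi-deriv}, every $D \in \der(\hsp(V,\Omega),\sp(V,\Omega))$ has the form $D(v,z,x) = (cv + hv,\,2cz,\,\ad(h)(x))$ for some $c \in \R$ and $h \in \sp(V,\Omega)$, and $h$ is recovered from $D\lvert_{\sp(V,\Omega)}$ while $c$ is recovered from $D\lvert_{\fz(\g)}$; so such a $D$ is determined by the pair $(c,h)$ and the task is to decide for which $(c,h)$ the map $D$ is diagonalizable with $\spec(D) \subseteq \{-1,0,1\}$ and $D \neq 0$.

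For the direction ``$\Rightarrow$'', suppose $0 \neq D = D_{(c,h)}$ induces a $3$-grading. Since $\sp(V,\Omega)$ is a $D$-invariant ideal on which $D$ acts by $\ad(h)$, the operator $\ad(h)$ is semisimple with $\spec(\ad h) \subseteq \{-1,0,1\}$. If $h = 0$, then $D$ acts on $V$ by $c\,\id_V$ and on $\fz(\g) \cong \R$ by $2c$, and requiring both $c$ and $2c$ to lie in $\{-1,0,1\}$ forces $c = 0$, i.e.\ $D = 0$ --- a contradiction. Hence $h \neq 0$; since $\fz(\sp(V,\Omega)) = \{0\}$, the operator $\ad(h)$ has a nonzero eigenvalue, so $h$ induces a $3$-grading $\sp(V,\Omega) = \sp(V,\Omega)_{-1}(h) \oplus \sp(V,\Omega)_0(h) \oplus \sp(V,\Omega)_1(h)$. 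Identifying $\sp(V,\Omega) \cong \sp(2n,\R)$ with $2n = \dim V$, Example \ref{ex:herm-sp} shows that $\tau_V := 2h$ is an antisymplectic involution on $V$; substituting $h = \tfrac12 \tau_V$ and noting $\ad(h)(x) = [\tfrac12\tau_V, x] = \tfrac12[\tau_V,x]$ as operators puts $D$ into the stated shape. To pin down $c$, observe that on $V$ the map $D$ acts as $c\,\id_V + \tfrac12\tau_V$; since $\tau_V^2 = \id_V$ and $\tau_V \neq \pm\id_V$ (as $\Omega$ is nondegenerate), $\tau_V$ has eigenvalues $\pm 1$ and $D\lvert_V$ has eigenvalues $c - \tfrac12$ and $c + \tfrac12$, two values differing by $1$. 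For both to lie in $\{-1,0,1\}$ we must have $\{c-\tfrac12, c+\tfrac12\} = \{-1,0\}$ or $\{0,1\}$, i.e.\ $c \in \{\pm\tfrac12\}$.

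For the direction ``$\Leftarrow$'', fix an antisymplectic involution $\tau_V$ on $V$ and $c \in \{\pm\tfrac12\}$, and let $D$ be defined by \eqref{eq:jacobi-deriv-3grad}. Write $D = D_c + D_h$, where $D_c(v,z,x) := (cv, 2cz, 0)$ and $D_h := \ad_\g(0,0,\tfrac12\tau_V)$; here $\tfrac12\tau_V \in \sp(V,\Omega)$ because the relation $\tau_V^\# = -\tau_V$ is equivalent to $\Omega(\tau_V v, \tau_V w) = -\Omega(v,w)$. Then $D_h$ is inner, and $D_c$ is a derivation by the computation underlying Example \ref{ex:heisenberg-liealg-deriv} (extended to $\g$ by $0$ on $\sp(V,\Omega)$, which one checks directly on the bracket of $\g$), so $D \in \der(\g)$ with $D(\sp(V,\Omega)) = [\tfrac12\tau_V, \sp(V,\Omega)] \subseteq \sp(V,\Omega)$. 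Also $D \neq 0$, since it acts on $\fz(\g)$ by $2c = \pm 1$. Finally, $D$ is diagonalizable with $\spec(D) \subseteq \{-1,0,1\}$: on $\fz(\g)$ it is $\pm\id$; on $V$ it is $c\,\id_V + \tfrac12\tau_V$, with eigenvalues $c \pm \tfrac12 \in \{-1,0,1\}$; and on $\sp(V,\Omega)$, since $2h = \tau_V$ is an antisymplectic involution, Example \ref{ex:herm-sp} (equivalently: $(2h)^2 = \id_V$ forces $\spec(h\lvert_V) \subseteq \{\pm\tfrac12\}$, hence $\ad(h)$ is semisimple with $\spec(\ad h) \subseteq \{-1,0,1\}$) yields $\sp(V,\Omega) = \bigoplus_{j \in \{-1,0,1\}} \sp(V,\Omega)_j(h)$. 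Hence $\g = \g_{-1}(D) \oplus \g_0(D) \oplus \g_1(D)$, and $D$ has the form claimed.

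I do not expect a real obstacle here: the statement is essentially bookkeeping once Proposition \ref{prop:jacobi-deriv} and Example \ref{ex:herm-sp} are available. The only points that require a little care are the identity $\tau_V^\# = -\tau_V$ (so that $\tfrac12\tau_V$ genuinely lies in $\sp(V,\Omega)$ and the bracket $[\tau_V, x]$ stays in $\sp(V,\Omega)$), the fact that Example \ref{ex:herm-sp} gives the correspondence between ``$\ad h$ induces a $3$-grading of $\sp(V,\Omega)$'' and ``$2h$ is an antisymplectic involution of $V$'' in both directions, and the trivial edge case $V = \{0\}$, where $\hsp(V,\Omega) \cong \R$ and both descriptions collapse to $D = \pm\id_\R$.
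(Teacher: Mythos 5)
Your proposal is correct and follows essentially the same route as the paper's proof: reduce to the pair $(c,h)$ via Proposition \ref{prop:jacobi-deriv}, use Example \ref{ex:herm-sp} to identify $2h$ with an antisymplectic involution, and read off the admissible eigenvalues. You spell out the determination of $c\in\{\pm\tfrac12\}$ (via the eigenvalues $c\pm\tfrac12$ of $D\lvert_V$) slightly more explicitly than the paper, which only notes $c\in\{0,\pm\tfrac12\}$ from the action on the center and then excludes $c=0$; this is a welcome clarification, not a divergence.
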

\begin{proof}
  Let \(D \in \der(\hsp(V,\Omega), \sp(V,\Omega)) \setminus \{0\}\). By Proposition \ref{prop:jacobi-deriv}, \(D\) is of the form
  \[D(v,z,x) = (cv + hv, 2cz, \ad(h)(x)), \quad (v,z,x) \in \hsp(V,\Omega),\]
  for some \(c \in \R\) and \(h \in \sp(V,\Omega)\).
  Suppose that \(\g = \g_{-1}(D) \oplus \g_0(D) \oplus \g_1(D)\).
  Then \(c \in \{0, \pm\frac{1}{2}\}\) and \(\ad(h)\) induces a 3-grading on \(\fs := \sp(V,\Omega)\), so that we have \(h \neq 0\).
  By identifying \(\sp(V,\Omega)\) with the standard symplectic Lie algebra \(\sp(2n,\R)\), where \(2n = \dim(V)\), we see that this implies that \(\tau_V := 2h\) is an antisymplectic involution on \((V,\Omega)\) (cf.\ Example \ref{ex:herm-sp}). In particular, we have \(c \neq 0\).

  On the other hand, if \(D\) is of the form \eqref{eq:jacobi-deriv-3grad} and \(\tau_V\) is an antisymplectic involution on \((V,\Omega)\), then, if we consider \(\tau_V\) as an element of \(\sp(V,\Omega)\), we see that \(h:= \frac{1}{2}\tau_V\) induces a 3-grading on \(\sp(V,\Omega)\) with \(\spec(\ad(h)) = \{0, \pm 1\}\). Combining this with the assumption that \(c \in \{\pm \frac{1}{2}\}\), we see that \(D\) induces a 3-grading on \(\g\).
\end{proof}

\begin{rem}
  Let \(\g = \hsp(V,\Omega)\).
  For any non-zero derivation \(D \in \der(\hsp(V,\Omega), \sp(V,\Omega))\) with \(\g = \g_{-1}(D) \oplus \g_0(D) \oplus \g_1(D)\), the cones \(C_\pm = \g_{\pm 1}(D) \cap \hsp(V,\Omega)_+\) are generating in \(\g_{\pm 1}(D)\).
  This follows from Corollary \ref{cor:jacobi-deriv-3grad} and the arguments in \cite[Example 3.7]{Ne20}: Recall from Example \ref{ex:jacobi-alg} that \(\hsp(V,\Omega)\) can be identified with the space \(\Pol_{\leq 2}(V)\) of polynomials on \(V\) with degree at most two, so that \(\hsp(V,\Omega)_+\) corresponds to the subset of non-negative polynomials.
  By Corollary \ref{cor:jacobi-deriv-3grad}, the derivation \(D\) is of the form \eqref{eq:jacobi-deriv-3grad}.
  Let \(V_{\pm 1}\) be the \((\pm 1)\)-eigenspace of the involution \(\tau_V\) and suppose that \(c = \frac{1}{2}\).
  Then one can show that \(\g_{-1}(D) \cong \sp(V,\Omega)_{-1}(\tau_V)\) can be identified with the space \(\Pol_2(V_1)\) of polynomials of degree two on \(V_1\) and \(\g_1(D) \cong V_1 \oplus \fz(\g) \oplus \sp(V,\Omega)_1(\tau_V)\) can be identified with the space \(\Pol_{\leq 2}(V_{-1})\).
  In particular, the intersections \(\g_{\pm 1}(D) \cap \hsp(V,\Omega)_+\) are generating.
\end{rem}

\begin{lem}
  \label{lem:deriv-spindler-perfect}
  Let \(\g = \g(\fs,V,\fz,\beta)\) be a perfect admissible Lie algebra (cf.\ {\rm Remark \ref{rem:spindler-adm-levi-comm}})  and let \(D \in \der(\g)\) be a derivation of \(\g\) annihilating \(\fs\). Then there exists \(D_V \in \End_\fs(V)\) and \(D_\fz \in \End(\fz)\) such that
  \[D(v,z,x) = D_V v + D_\fz z \quad \text{for } v \in V, z \in \fz, x \in \fs.\]
  Moreover, the pair \((D_V,D_\fz)\) is \(\beta\)-compatible (cf.\ {\rm Definition \ref{def:gen-heis-alg}}).
  Conversely, every \(\beta\)-compatible pair \((D_V, D_\fz)\) with \(D_V \in \End_\fs(V)\) defines via \(D := D_V \oplus D_\fz\) a derivation on \(\g\) that annihilates \(\fs\).
\end{lem}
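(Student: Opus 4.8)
The plan is to combine the Spindler description of $\g$ with the fact that the radical of $\g$ is a characteristic ideal. First I would record the structural consequences of the hypotheses: since $\g=\g(\fs,V,\fz,\beta)$ is admissible and $\fs$ is semisimple, Theorem~\ref{thm:spindlers-constr} identifies $\fz$ with $\fz(\g)$ and gives $\rad(\g)=\fu=V+\fz$; moreover, the $V$-component of a bracket $[(v,z,x),(v',z',x')]$ is $x.v'-x'.v$, so the $V$-component of $[\g,\g]$ is $\spann(\fs.V)$, and perfectness ($\g=[\g,\g]$) forces $V=\spann(\fs.V)$ (cf.\ also Remark~\ref{rem:spindler-adm-levi-comm}). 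Throughout I would identify $V,\fz,\fs$ with their images in $\g$, so that $[x,v]=x.v$ and $[v,w]=\beta(v,w)$ for $x\in\fs$, $v,w\in V$, while $\fz$ is central.

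For the forward implication, since the center and the radical are characteristic ideals, $D$ preserves $\fz$ and $\fu$; put $D_\fz:=D\lvert_\fz\in\End(\fz)$. The one step requiring care is to sharpen $D(V)\subseteq\fu$ to $D(V)\subseteq V$: given $v\in V$, write $v=\sum_i[x_i,w_i]$ with $x_i\in\fs$, $w_i\in V$; since $D(\fs)=\{0\}$ this gives $Dv=\sum_i[x_i,Dw_i]$, and writing $Dw_i=v_i'+z_i'$ with $v_i'\in V$, $z_i'\in\fz$ and using that $\fz$ is central yields $Dv=\sum_i x_i.v_i'\in V$. Hence $D_V:=D\lvert_V$ is an endomorphism of $V$. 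Applying $D$ to $[x,v]$ for $x\in\fs$, $v\in V$ gives $D_V(x.v)=x.(D_V v)$, so $D_V\in\End_\fs(V)$, and applying $D$ to $[v,w]=\beta(v,w)$ for $v,w\in V$ gives $D_\fz\beta(v,w)=\beta(D_V v,w)+\beta(v,D_V w)$, so $(D_V,D_\fz)$ is $\beta$-compatible. Since $D$ kills $\fs$, it follows that $D(v,z,x)=D_V v+D_\fz z$, which is the asserted form.

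For the converse, let $(D_V,D_\fz)$ be $\beta$-compatible with $D_V\in\End_\fs(V)$ and set $D(v,z,x):=(D_V v,D_\fz z,0)$. Checking the Leibniz rule against the bracket $[(v,z,x),(v',z',x')]=(x.v'-x'.v,\beta(v,v'),[x,x'])$ is a short computation: the $\fs$-components of both sides vanish, the $V$-components agree because $D_V$ commutes with the $\fs$-action, and the $\fz$-components agree precisely by $\beta$-compatibility. Thus $D\in\der(\g)$, and $D(\fs)=\{0\}$ by construction.

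I do not expect a serious obstacle here. The only slightly delicate point is the improvement $D(V)\subseteq V$ (as opposed to merely $D(V)\subseteq\fu$), which is exactly where perfectness and admissibility enter---through $V=\spann(\fs.V)$ and the centrality of $\fz=[V,V]$---together with the bookkeeping of matching the $V$- and $\fz$-components in the converse direction.
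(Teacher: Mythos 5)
Your proof is correct and follows essentially the same route as the paper's: restrict \(D\) to the characteristic ideals, derive \(\fs\)-equivariance of \(D_V\) and \(\beta\)-compatibility from the Leibniz rule, and verify the converse componentwise against the Spindler bracket. The only (harmless) difference is how you sharpen \(D(V)\subseteq\fu\) to \(D(V)\subseteq V\): you use \(V=\spann(\fs.V)\) together with the centrality of \(\fz\), whereas the paper writes \(V=[\ft_\fs,\fu]\) for a compactly embedded Cartan subalgebra \(\ft_\fs\subset\fs\) and uses that \(D\) commutes with \(\ad\ft_\fs\) --- both come down to the same observation.
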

\begin{proof}
  Let \(D \in \der(\g)\) with \(D(\fs) = \{0\}\).
  Recall from Theorem \ref{thm:spindlers-constr} that, if \(\ft_\fs\) is a compactly embedded Cartan subalgebra of \(\fs\), then \(\ft := \{0\} \times \fz \times \ft_\fs\) is a compactly embedded Cartan subalgebra of \(\g\) and that \(V = [\ft,\fu]\), where \(\fu = V + \fz(\g)\) is the maximal nilpotent ideal of \(\g\).
  Since \(D\) commutes with \(\ad \fs\), it commutes in particular with \(\ad \ft = \ad \ft_\fs\), so that \(V\) is \(D\)-invariant.
  Moreover, \(\fz(\g) = \fz\) is \(D\)-invariant, so that there exists \(D_V \in \End(V)\) and \(D_\fz \in \End(\fz)\) such that \(D = D_V \oplus D_\fz\).
  The condition \([D,\ad(\fs)] = \{0\}\) implies that \(D_V \in \End_\fs(V)\) and \eqref{eq:deriv-spindler-perfect-Dz} holds because \(D\) is a derivation.

  Suppose conversely that the pair \((D_V,D_\fz)\) with \(D_V \in \End_\fs(V)\) and \(D_\fz \in \End(\fz)\) satisfies \eqref{eq:deriv-spindler-perfect-Dz}. It remains to show that \(D := D_V \oplus D_\fz \in \der(\g)\), which is equivalent to
  \[[D,\ad(x)] = \ad(Dx) \quad \text{for all } x \in \g.\]
  For \(x \in \fs\), this follows from \([D,\ad(\fs)] = \{0\}\) and \(D(\fs) = \{0\}\), and for \(x \in \fz\) from \(D(\fz) \subset \fz\). Let \(x \in V\) and \(y = (y_V,y_\fz,y_\fs) \in V \times \fz \times \fs = \g\). Then
  \[[D,\ad(x)]y = D_\fz[x,y_V] + D_V[x,y_\fs] - [x,D_Vy_V] = [D_Vx,y_V] + [D_Vx,y_\fs] = \ad(Dx)y,\]
  which finishes the proof.
\end{proof}

\begin{thm} {\rm (Classification Theorem)}
  \label{thm:adm-deriv-3grad}
  Let \(\g\) be an admissible Lie algebra with \(\fz(\g) \subset [\g,\g]\) and let \(D \in \der(\g)\). Then \(\g = \g_{-1}(D) \oplus \g_0(D) \oplus \g_1(D)\) if and only if \(\g\) is of the form \(\g = \g(\fl,V,\fz,\beta)\) in terms of Spindler's construction, where \(V,\fz,\fl,\) and \(\beta\) have the following properties:
  \begin{enumerate}[label={\rm (\arabic*)}]
    \item \(D(\fz(\fl)) = \{0\}\).
    \item Let \(\fs := [\fl,\fl]\). Then \(D\lvert_\fs = \ad(h)\) for some \(h \in \fs\) with \(\fs = \fs_{-1}(h) \oplus \fs_0(h) \oplus\fs_1(h)\). 
    \item Let \(\rho: \fl \rightarrow \sp(V,\beta)\) be the homomorphism corresponding to the representation of \(\fl\) on \(V\). Then \(D\lvert_V = D_V + \rho(h)\), where \(D_V \in \End_\fl(V)\) is diagonalizable on \(V\) with \(\spec(D_V) \subset \{0, \pm \frac{1}{2}\}\). Moreover, \(\ker(D_V) = \ker(\rho(h))\) and \(V_{-1/2}(D_V) \oplus V_{1/2}(D_V) = V_{-1/2}(\rho(h)) \oplus V_{1/2}(\rho(h))\).
  \end{enumerate}
  If these conditions are satisfied, then the following holds:
  \begin{enumerate}
    \item Suppose that \(\fl = \fs_0 \oplus \bigoplus_{k=1}^m \fs_k\), where \(\fs_0\) is compact and each \(\fs_k\) is hermitian simple for \(1 \leq k \leq m\). Let \(\fs_{\ell_1},\ldots,\fs_{\ell_n}\) be the hermitian simple ideals that act non-trivially on the submodule \(V_D := V_{-1/2}(D_V) \oplus V_{1/2}(D_V)\). Then each \(\fs_{\ell_k}\) is of tube type for \(1 \leq k \leq n\) and \((V_D, f \circ \beta)\) is a symplectic \((\fs_{\ell_1} \oplus \ldots \oplus \fs_{\ell_n})\)-module of convex type for every \(f \in \fz^*\) for which \((V, f \circ \beta)\) is a symplectic \(\fl\)-module of convex type.
    \item We have
      \begin{align*}
        \fz &= [V_{1/2}(D_V), V_{1/2}(D_V)] \oplus [V_{-1/2}(D_V), V_{-1/2}(D_V)]\\
            &\oplus ([V_0(D_V), V_0(D_V)] + [V_{-1/2}(D_V),V_{1/2}(D_V)]).
      \end{align*}
    \item For \(D_\fz := D\lvert_{\fz}\), we have \(D_\fz \beta(v,w) = \beta(D_V v, w) + \beta(v, D_V w)\) for all \(v,w \in V\), that is, the pair \((D_V, D_\fz)\) is \(\beta\)-compatible.
  \end{enumerate}
\end{thm}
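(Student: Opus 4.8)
The plan is to prove the two implications separately, the substantial one being that a $D$ with $\g=\g_{-1}(D)\oplus\g_0(D)\oplus\g_1(D)$ forces a Spindler presentation with properties (1)--(3) (and then to read off (a)--(c)). Throughout I use that the hypothesis $\fz(\g)\subseteq[\g,\g]$ means $\fz(\g)=[V,V]$ in every Spindler presentation (Remark \ref{rem:spindler-assumption-center}(a)) and that we may assume $\fl$ acts effectively on $V$ (Remark \ref{rem:spindler-assumption-center}(b)). First I would invoke Theorem \ref{thm:adm-deriv-3grad-reducompl} to get a $D$-invariant reductive quasihermitian $\fl$ with $\g=\fu\rtimes\fl$ and $D(\fz(\fl))=\{0\}$ (this is (1)), and then Corollary \ref{cor:adm-semprod-spindler} to write $\g\cong\g(\fl,V,\fz(\g),\beta)$ for $V=[\ft_\fl,\fu]$ with any compactly embedded Cartan subalgebra $\ft_\fl\subseteq\fl$. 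Since $\fs:=[\fl,\fl]$ is semisimple and $D$-invariant, $D|_\fs=\ad_\fs(h)$ for a unique $h\in\fs$, and diagonalizability of $D$ gives $\fs=\fs_{-1}(h)\oplus\fs_0(h)\oplus\fs_1(h)$, which is (2). The key device is $D':=D-\ad_\g(h)$: it annihilates $\fl$ (it kills $\fs$ and $\fz(\fl)$), hence commutes with $\ad_\g(\ft_\fl)$, hence preserves $V$ and $\fz(\g)$. Setting $D_V:=D'|_V\in\End_\fl(V)$ and $D_\fz:=D'|_{\fz(\g)}=D|_{\fz(\g)}$ gives $D|_V=D_V+\rho(h)$, the first half of (3); and expanding $D[v,w]=[Dv,w]+[v,Dw]$ for $v,w\in V$ and using the $\fl$-invariance \eqref{eq:sympl-mod-spindler} of $\beta$ with $x=h$ collapses it to $\beta$-compatibility of $(D_V,D_\fz)$, which is consequence (c) (alternatively apply Lemma \ref{lem:deriv-spindler-perfect} to $[\g,\g]=\g(\fs,V,\fz(\g),\beta)$).

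Next I would determine the eigenvalues of $\rho(h)$. Fix $f\in\fz(\g)^*$ with $\Omega:=f\circ\beta$ making $V$ a symplectic $\fl$-module of convex type (Theorem \ref{thm:spindler-adm}) and decompose $V$ into isotypic components via Theorem \ref{thm:sympmodconv-decomp}. On a nontrivial simple summand at most one hermitian simple ideal of $\fs$ acts; moreover $h$ has no component in any compact ideal of $\fs$, because the $\ad$ of such a component has purely imaginary spectrum while $D|_\fs=\ad_\fs(h)$ is $\R$-diagonalizable with spectrum in $\{0,\pm1\}$. Hence on each simple summand either $\rho(h)$ vanishes, or the unique hermitian ideal acting there carries a nonzero component of $h$ inducing a genuine $3$-grading on it; that ideal is then of tube type, and Lemma \ref{lem:symp-mod-h1-homom}(b) identifies $2\rho(h)$ there with an antisymplectic involution. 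So $\spec\rho(h)\subseteq\{0,\pm\tfrac12\}$, and as a semisimple element of a reductive Lie algebra $\rho(h)$ is $\R$-diagonalizable. Consequently $\ad_\g(h)$ is $\R$-diagonalizable, it commutes with $D$ (as $D(h)=[h,h]=0$), so $D'=D-\ad_\g(h)$ and hence $D_V$ are $\R$-diagonalizable. This paragraph also supplies the tube-type half of consequence (a); the convex-type assertion for $V_D:=V_{1/2}(D_V)\oplus V_{-1/2}(D_V)$ will follow once $V_D$ is identified with the sum of nonzero $\rho(h)$-eigenspaces, via Lemma \ref{lem:sympmodconv-submodules} and Lemma \ref{lem:sympmodconv-simple-herm-ideal}.

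The heart of the argument --- and what I expect to be the main obstacle --- is pinning down $\spec D_V$ and proving $\ker D_V=\ker\rho(h)$ and $V_{1/2}(D_V)\oplus V_{-1/2}(D_V)=V_{1/2}(\rho(h))\oplus V_{-1/2}(\rho(h))$. The mechanism I would use: from $\beta$-compatibility and $\beta(v,v)=0$ one gets $\Omega(D_Vv,v)=f(D_\fz\beta(v,v))=0$ for all $v$, so $D_V$ is $\Omega$-self-adjoint, whereas $\rho(h)\in\sp(V,\Omega)$ is $\Omega$-skew by \eqref{eq:sympl-mod-spindler}. Since $D_V\in\End_\fl(V)$ commutes with $\rho(h)$ and both are $\R$-diagonalizable, write $V=\bigoplus_{s,a}V^{s,a}$ for the joint eigenspaces ($D_V=s$, $\rho(h)=a$, $a\in\{0,\pm\tfrac12\}$). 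Self-adjointness of $D_V$ and skewness of $\rho(h)$ force $\Omega(V^{s,a},V^{s',a'})=0$ unless $s=s'$ and $a'=-a$; thus $V^{s,a}$ pairs nondegenerately with $V^{s,-a}$, and $\Omega|_{V^{s,0}}$ is nondegenerate. Two numerical constraints then remain: $D|_V$ acts by $s+a\in\{0,\pm1\}$ on $V^{s,a}$; and since $D_\fz=D|_{\fz(\g)}$ has eigenvalues in $\{0,\pm1\}$ while $\beta(V^{s,a},V^{s',a'})$ lies in the $(s+s')$-eigenspace of $D_\fz$, one has $\beta(V^{s,a},V^{s',a'})=0$ whenever $s+s'\notin\{0,\pm1\}$. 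Now if $a=\pm\tfrac12$ and $V^{s,a}\ne 0$ then $V^{s,-a}\ne 0$, and $s\pm\tfrac12\in\{0,\pm1\}$ forces $s=\pm\tfrac12$; if $a=0$ and $V^{s,0}\ne 0$ then nondegeneracy of $\Omega|_{V^{s,0}}$ forces $\beta(V^{s,0},V^{s,0})\ne 0$, hence $2s\in\{0,\pm1\}$, and together with $s+0\in\{0,\pm1\}$ this leaves only $s=0$. Hence $\spec D_V\subseteq\{0,\pm\tfrac12\}$, $V^{0,\pm1/2}=V^{\pm1/2,0}=0$, and comparing the surviving summands yields exactly $\ker D_V=V^{0,0}=\ker\rho(h)$ and the matching of the half-integer eigenspace sums, completing (3). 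Consequence (b) drops out: applying $\ad_\g(h)$ to $[v,w]\in\fz(\g)$ shows $\beta(\ker\rho(h),V_{\ne 0}(\rho(h)))=0$, so $\beta(V_0(D_V),V_{\pm1/2}(D_V))=0$ and $\fz(\g)=[V_{1/2}(D_V),V_{1/2}(D_V)]\oplus[V_{-1/2}(D_V),V_{-1/2}(D_V)]\oplus\bigl([V_0(D_V),V_0(D_V)]+[V_{1/2}(D_V),V_{-1/2}(D_V)]\bigr)$, the summands sitting in the $D_\fz$-eigenvalues $1,-1,0$.

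For the converse, suppose $\g=\g(\fl,V,\fz,\beta)$ satisfies (1)--(3) and $D\in\der(\g)$. Then $D$ is block-diagonal for $\g=V\oplus\fz(\g)\oplus\fs\oplus\fz(\fl)$: it vanishes on $\fz(\fl)$, equals $\ad(h)$ on $\fs$ (which is $3$-graded by (2)), and equals $D_V+\rho(h)$ on $V$ with $D_V$ and $\rho(h)$ commuting diagonalizable operators whose eigenvalues lie in $\{0,\pm\tfrac12\}$ and whose kernels and half-integer eigenspace sums coincide by (3); hence on $V$ its eigenvalues are $0$ on $\ker D_V=\ker\rho(h)$ and $s+a\in\{-1,0,1\}$ elsewhere. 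On $\fz(\g)=[V,V]$, $\beta$-compatibility (automatic, $D$ being a derivation) together with $\beta(\ker\rho(h),V_{\ne 0}(\rho(h)))=0$ shows $D|_{\fz(\g)}$ has eigenvalues in $\{0,\pm1\}$. Therefore $\g=\g_{-1}(D)\oplus\g_0(D)\oplus\g_1(D)$, which finishes the proof.
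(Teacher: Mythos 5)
Your overall route coincides with the paper's (Theorem \ref{thm:adm-deriv-3grad-reducompl} and Corollary \ref{cor:adm-semprod-spindler} for the presentation and property (1), subtracting $\ad_\g(h)$ to isolate $(D_V,D_\fz)$, Lemma \ref{lem:symp-mod-h1-homom} for $\spec\rho(h)\subset\{0,\pm\tfrac12\}$, and the eigenvalue bookkeeping for (3) and (a)--(c)). But the step you yourself call the heart of the argument has a genuine gap: the claim that $D_V$ is $\Omega$-self-adjoint. Your derivation of it is vacuous --- putting $w=v$ into $D_\fz\beta(v,w)=\beta(D_Vv,w)+\beta(v,D_Vw)$ yields $0=\beta(D_Vv,v)-\beta(D_Vv,v)$ and says nothing about $\Omega(D_Vv,v)$. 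Worse, the claim is false in general: the paper's final example produces derivations inducing 3-gradings in which $D_V=\tfrac{\lambda}{2}\id$ on the pieces of a decomposition $V=V_{-1}\oplus V_1$ into submodules that is \emph{not} $\Omega$-orthogonal, and then $\Omega(D_Vv,w)-\Omega(v,D_Vw)=\tfrac{\lambda-\lambda'}{2}\Omega(v,w)\neq 0$ for suitable $v\in V_\lambda$, $w\in V_{\lambda'}$ (the author explicitly remarks that $D_V$ need not be symmetric with respect to $\beta$). Hence your orthogonality relations $\Omega(V^{s,a},V^{s',a'})=0$ unless $s=s'$, the nondegenerate pairing of $V^{s,a}$ with $V^{s,-a}$, and the nondegeneracy of $\Omega\lvert_{V^{s,0}}$ are all unjustified, and with them the exclusion of eigenvalues such as $s=\pm 1$ and $s=\pm\tfrac32$ of $D_V$.

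The repair is exactly what the paper does, and it uses a lemma you already cite for another purpose: each eigenspace $V_\lambda(D_V)$ is an $\fl$-submodule because $D_V\in\End_\fl(V)$, hence by Lemma \ref{lem:sympmodconv-submodules} a symplectic $\fl$-module of convex type, so $\beta(V_\lambda(D_V),V_\lambda(D_V))\neq\{0\}$; $\beta$-compatibility places this set inside $\fz_{2\lambda}(D_\fz)$, and $\spec(D_\fz)\subset\{0,\pm 1\}$ forces $\lambda\in\{0,\pm\tfrac12\}$. Combining $s\in\{0,\pm\tfrac12\}$ with your (correct) constraints $a\in\{0,\pm\tfrac12\}$ and $s+a\in\{0,\pm 1\}$ kills $V^{\pm 1/2,0}$ and $V^{0,\pm 1/2}$ and yields $\ker D_V=\ker\rho(h)$ and the matching of the half-integer eigenspace sums, with no orthogonality statement needed. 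The remainder of your proposal (the converse direction, consequence (b) via applying $\ad_\g(h)$ to $\beta(\ker\rho(h),V_{\neq 0}(\rho(h)))$, and consequence (a)) is sound and close to the paper's treatment.
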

\begin{proof}
  Without loss of generality, we may assume that \(\fl\) acts effectively on \(V\) (cf.\ Remark \ref{rem:spindler-assumption-center}(b)). We first note that the conditions (1)-(3) are sufficient for \(D\) to induce a 3-grading on \(\g\).

  Suppose conversely that \(D\) induces a 3-grading on \(\g\) as above.
  Let \(\fu\) be the maximal nilpotent ideal of \(\g\).
  By Theorem \ref{thm:adm-deriv-3grad-reducompl}, there exists a reductive \(D\)-invariant quasihermitian subalgebra \(\fl \subset \g\) such that \(\fu \rtimes \fl\) and \(D(\fz(\fl)) = \{0\}\).
  We fix a compactly embedded Cartan subalgebra \(\ft_\fs \subset \fs\), so that \(\ft := \fz(\g) + \fz(\fl) + \ft_\fs\) is a compactly embedded Cartan subalgebra of \(\g\) (cf.\ Theorem \ref{thm:adm-semprod}(b)).
  By Corollary \ref{cor:adm-semprod-spindler}, we now obtain \(\g = \g(\fl,V,\fz,\beta)\) in terms of Spindler's construction, where \(V := [\ft,\fu]\), \(\fz := \fz(\g)\), and \(\beta := [\cdot,\cdot]\lvert_{V \times V}\).

  The Lie algebra \(\fs = [\fl,\fl]\) is quasihermitian and semisimple, hence it can be written as a direct sum \(\fs = \fs_0 \oplus \bigoplus_{k=1}^m \fs_k\), where \(\fs_0\) is compact and \(\fs_k\) is a hermitian simple Lie algebra for \(k=1,\ldots,m\).
  We fix a linear functional \(f \in \fz^*\) such that the bilinear form \(\Omega := f \circ \beta\) is symplectic and \((V,\Omega)\) is a symplectic \(\fl\)-module of convex type (cf.\ Theorem \ref{thm:spindler-adm}).
  Moreover, we fix an \(\Omega\)-orthogonal decomposition into \(\fl\)-submodules
  \begin{equation}
    \label{eq:adm-deriv-3grad-decomp}
  (V,\Omega) = (V_0,\Omega_0) \oplus \bigoplus_{k=1}^m (V_k,\Omega_k)
  \end{equation}
  such that \(V_0\) is a submodule on which all hermitian simple ideals of \(\fs\) act trivially and \(V_k\) is a submodule for which \(\fs_k\) is the only hermitian simple ideal of \(\g\) in the above decomposition that acts non-trivially on it (cf.\ Theorem \ref{thm:sympmodconv-decomp}).
  By Lemma \ref{lem:sympmodconv-simple-herm-ideal}, each submodule \(V_k\) can be decomposed into orthogonal \(\fs_k\)-submodules \(V_{k,\eff} \oplus V_{k,\fix}\) such that \(\fs_k\) acts trivially on \(V_{k,\fix}\) and \(V_{k,\eff}\) is a symplectic \(\fs_k\)-module of convex type.

   Since \(D\) leaves \(\fs\) invariant, its restriction to this subalgebra is of the form \(\ad_\fs(h)\) for some \(h \in \fs\). By Lemma \ref{lem:deriv-spindler-perfect}, the derivation \(D\) is thus of the form
  \[D(v,z,x) = (D_V v + \rho(h)v, D_\fz z, [h,x]) \quad \text{for } v \in V, z \in \fz, x \in \fl,\]
  and some \(D_V \in \End_\fs(V), D_\fz \in \End(\fz)\).
  The pair \((D_V,D_\fz)\) is \(\beta\)-compatible, i.e.\ it satisfies \eqref{eq:deriv-spindler-perfect-Dz}. The fact that \(D(\fz(\fl)) = \{0\}\) implies that we even have \(D_V \in \End_\fl(V)\).

  It is now obvious from the description of \(D\) that the adjoint representation of \(h\) induces a 3-grading on \(\fs\).
  We can write \(h\) as \(h = \sum_{k=0}^m h_k\), where \(h_k \in \fs_k\), so that \(h_k\) induces a 3-grading on each ideal \(\fs_k\).
  Then we have \(h_k \neq 0\) only if \(\fs_k\) is a hermitian simple Lie algebra of tube type for \(k=0,\ldots,m\).
  Moreover, since \((V_{k,\eff},\Omega_k)\) is a symplectic \(\fs_k\)-module of convex type, the endomorphism \(\rho(h_k)\) is diagonalizable on \(V_{k,\eff}\) with eigenvalues \(\pm \frac{1}{2}\) if \(\fs_k\) is hermitian simple and of tube type and \(h_k \neq 0\), and it is trivial otherwise (cf.\ Lemma \ref{lem:symp-mod-h1-homom}).
  Since \(D_V\) commutes with \(\rho(h)\), it is therefore also diagonalizable.
  For each eigenvalue \(\lambda \in \R\) of \(D_V\), the eigenspace \(V_\lambda(D_V)\) is a symplectic \(\fl\)-module of convex type (cf.\ Lemma \ref{lem:sympmodconv-submodules}), so that \(\beta\) must be non-zero when restricted to this subspace. Combined with \eqref{eq:deriv-spindler-perfect-Dz}, this implies that
  \begin{equation}
    \label{eq:adm-deriv-3grad-z-eig}
    \{0\} \neq \beta(V_\lambda(D_V), V_\lambda(D_V)) \subset \fz_{2\lambda}(D_\fz).
  \end{equation}
  As a result, we have \(\spec(D_V) \subset \{0, \pm \frac{1}{2}\}\).
  This also proves
  \[\ker(D_V) = \ker(\rho(h)) \quad \text{and} \quad  V_{-1/2}(D_V) \oplus V_{1/2}(D_V) = V_{-1/2}(\rho(h)) \oplus V_{1/2}(\rho(h)).\]

  It remains to show that (a)-(c) hold under the above assumptions. (c) has already been shown.
  In order to prove (a) and (b), we recall that \(V_D\) and \(\ker(\rho(h))\) are adapted to the decomposition \eqref{eq:adm-deriv-3grad-decomp} in the sense that
  \[V_D = \bigoplus_{1 \leq k \leq m, h_k \neq 0} V_{k,\eff} \quad \text{and} \quad \ker(\rho(h)) = V_0 \oplus \bigoplus_{k=1}^m V_{k,\fix} \oplus \bigoplus_{1 \leq k \leq m, h_k = 0} V_{k,\eff}.\]
  If \(h_k \neq 0\) for \(0 \leq k \leq m\), then \(\fs_k\) is hermitian simple and of tube type.
  Since exactly one \(\fs_\ell\) acts non-trivially on \(V_k\) for \(1 \leq k, \ell, \leq m\), this shows that only those hermitian simple ideals of \(\fl\) which are of tube type can act non-trivially on \(V_D\).
  Hence, we have proven (a).
  Moreover, the above decomposition of \(V_D\) and \(\ker(\rho(h))\) shows that \(V_D\) and \(\ker(\rho(h))\) are \(\Omega\)-orthogonal.
  Since we can apply the same argument to every \(g \in \fz^*\) such that \((V,g \circ \beta)\) is a symplectic \(\fl\)-module of convex type and since the set of these functionals generates \(\fz^*\) (cf.\ Theorem \ref{thm:spindler-adm}), this implies that \(V_D\) and \(\ker(\rho(h))\) are also \(\beta\)-orthogonal.
  That the sum in (b) is indeed direct follows from \eqref{eq:adm-deriv-3grad-z-eig}.
\end{proof}

\begin{cor}
  Let \(\g\) be an admissible Lie algebra with \(\dim \fz(\g) = 1\) and let \(0 \neq D \in \der(\g)\) such that \(\g = \g_{-1}(D) \oplus \g_0(D) \oplus \g_1(D)\). Then \(\g\) is of the form \(\g = \g(\fl,V, \fz, \beta)\) in terms of Spindler's construction and \(D\) is of the form
  \[D(v,z,x) = (\tfrac{\lambda}{2}v + h.v, \lambda z, [h,x]) \quad \text{for } (v,z,x) \in V \times \fz \times \fl = \g,\]
  where \(\lambda \in \{\pm 1\}\) and \(h \in \fl \setminus \{0\}\) is such that \(\fl = \fl_{-1}(h) \oplus \fl_0(h) \oplus \fl_1(h)\).
\end{cor}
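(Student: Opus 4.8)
\emph{Proof proposal.} The statement is a direct specialization of the Classification Theorem \ref{thm:adm-deriv-3grad} to the case of a one‑dimensional center, so the plan is to apply that theorem and then let $\dim\fz(\g)=1$ do the rest. First I would put ourselves into the setting of the Classification Theorem: by Remark \ref{rem:spindler-assumption-center} we may assume that $\fl$ acts effectively on $V$ and that $\fz(\g)\subset[\g,\g]$, so that in the resulting Spindler data $\fz=\fz(\g)=[V,V]$; since $\dim\fz(\g)=1$ this is nonzero, hence $V\neq\{0\}$, and effectiveness means that $\rho\colon\fl\to\sp(V,\beta)$ is faithful. (If $\fl$ did not act effectively one first splits off, as in Remark \ref{rem:spindler-assumption-center}(b), the semisimple ideal $\fl_{\fix,V}$; $D$ restricts to an inner derivation of it which induces a $3$‑grading, and the formula on the remaining effective summand reassembles with $h$ replaced by $h$ plus that inner element.) The Classification Theorem now gives $\g=\g(\fl,V,\fz,\beta)$ with $D(\fz(\fl))=\{0\}$, $D|_\fs=\ad(h)$ for some $h\in\fs:=[\fl,\fl]$ with $\fs=\fs_{-1}(h)\oplus\fs_0(h)\oplus\fs_1(h)$, and $D|_V=D_V+\rho(h)$ with $D_V\in\End_\fl(V)$ diagonalizable, $\spec(D_V)\subset\{0,\pm\tfrac12\}$, $\ker(D_V)=\ker(\rho(h))$, while $(D_V,D_\fz)$ is $\beta$‑compatible, where $D_\fz:=D|_\fz$.

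The key observation is that a one‑dimensional center forces $D_V$ to be a scalar. Indeed $D_\fz$ is then multiplication by some $c\in\R$, and $c$ is an eigenvalue of $D$, so $c\in\{0,\pm1\}$. By the inclusion \eqref{eq:adm-deriv-3grad-z-eig} established inside the proof of the Classification Theorem — equivalently, because each eigenspace $V_\mu(D_V)$ is a symplectic submodule of convex type (Lemma \ref{lem:sympmodconv-submodules}), so $\beta$ is nonzero on it — every eigenvalue $\mu$ of $D_V$ satisfies $\{0\}\neq\beta(V_\mu(D_V),V_\mu(D_V))\subset\fz_{2\mu}(D_\fz)$, whence $2\mu=c$. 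As $D_V$ is diagonalizable, $D_V=\tfrac{c}{2}\,\id_V$.

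It remains to identify $c$ and to assemble the formula. If $c=0$ then $D_V=0$, so $\ker(\rho(h))=\ker(D_V)=V$, i.e.\ $\rho(h)=0$ and hence $D|_V=0$; faithfulness of $\rho$ gives $h=0$, so $D|_\fs=\ad(h)=0$, and together with $D(\fz(\fl))=\{0\}$ and $D_\fz=0$ this yields $D=0$, contradicting $D\neq0$. Therefore $c\in\{\pm1\}$; set $\lambda:=c$, so $D_\fz=\lambda\,\id_\fz$ and $D_V=\tfrac{\lambda}{2}\,\id_V$. Now $\ker(\rho(h))=\ker(D_V)=\{0\}$, so $\rho(h)\neq0$ (as $V\neq\{0\}$) and hence $h\neq0$; moreover $h\in\fs$ acts trivially on $\fz(\fl)$ and $D(\fz(\fl))=\{0\}$, so $\ad(h)$ induces the $3$‑grading $\fl=\fl_{-1}(h)\oplus\fl_0(h)\oplus\fl_1(h)$ with $\fl_{\pm1}(h)=\fs_{\pm1}(h)$. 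Putting this together,
\[D(v,z,x)=\bigl(D_Vv+\rho(h)v,\,D_\fz z,\,[h,x]\bigr)=\bigl(\tfrac{\lambda}{2}v+h.v,\,\lambda z,\,[h,x]\bigr)\]
for $(v,z,x)\in V\times\fz\times\fl=\g$, as asserted.

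I do not expect a genuine obstacle here: all the structural content is carried by the Classification Theorem, and the only new input is the elementary eigenvalue bookkeeping in the second paragraph. The one point that needs a little care is the initial reduction to an effective $\fl$‑action with $\fz(\g)=[V,V]$, which is exactly what makes the faithfulness of $\rho$ (used twice above) available and which rules out the degenerate purely reductive situation.
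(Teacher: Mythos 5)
Your proposal is correct and follows essentially the same route as the paper: apply the Classification Theorem, use $\dim\fz=1$ together with the fact that each eigenspace $V_\mu(D_V)$ is a symplectic submodule of convex type to force $2\mu=\lambda$ and hence $D_V=\tfrac{\lambda}{2}\id_V$, then rule out $\lambda=0$ via $\ker(D_V)=\ker(\rho(h))$ and $D\neq0$. Your version merely spells out in more detail the step where $\lambda=0$ forces $h=0$ (via faithfulness of $\rho$) and the preliminary reduction to an effective action, both of which the paper treats more tersely.
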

\begin{proof}
  That \(\g\) and \(D\) are of the form
  \[D(v,z,x) = (D_V v + h.v, \lambda z, [h,x]) \quad \text{for } (v,z,x) \in V \times \fz \times \fl = \g\]
  and some diagonalizable endomorphism \(D_V \in \End_\fl(V)\) with \(\spec(D_V) \subset \{0, \pm \frac{1}{2}\}\) and \(\lambda \in \{0, \pm 1\}\) follows from Theorem \ref{thm:adm-deriv-3grad}.
  Denote the representation of \(\fl\) on \(V\) by \(\rho: \fl \rightarrow \sp(V,\beta)\).

  Since \(\fz \cong \fz(\g)\) is one-dimensional, the direct sum in Theorem \ref{thm:adm-deriv-3grad}(b) only contains one summand.
  If \(\mu \in \R\) is an eigenvalue of \(D_V\), then \(V_\mu(D_V)\) is a symplectic \(\fl\)-module of convex type (cf.\ Lemma \ref{lem:sympmodconv-submodules}) so that
  \[\{0\} \neq \spann \beta(V_\mu(D_V), V_\mu(D_V)) = \fz_{2\mu}(D\lvert_\fz) = \fz.\]
  Thus, \(2\mu = \lambda\) and therefore \(D_V = \mu \id_V\). The case \(\mu = \lambda = 0\) would imply that \(h = 0\), which is not possible because \(D \neq 0 \) by assumption. Thus, \(\lambda \in \{\pm 1\}\).
\end{proof}

With the description of all derivations \(D\) of admissible Lie algebras \(\g\) that induce a 3-grading on \(\g\), we can now check whether \(\g_{\pm 1}(D) \cap W\) generates \(\g_{\pm 1}(D)\). Here, \(W\) is the invariant closed convex cone that we defined in the beginning of Section \ref{sec:liewedge-adm} via the natural homomorphism into the Jacobi algebra.

\begin{lem}
  \label{lem:adm-3grad-intpoints}
  Let \(\g = \g(\fs, V, \fz, \beta)\) be an admissible Lie algebra, where \(\fs\) is simple hermitian and of tube type. Let \(f \in \fz^*\) such that \(\Omega := f \circ \beta\) is symplectic and \((V,\Omega)\) is a symplectic \(\fs\)-module of convex type. Denote the corresponding representation of \(\fs\) by \(\rho: \fs \rightarrow \sp(V,\Omega)\). Moreover, let \(0 \neq h \in \fs\) such that \(\fs = \fs_{-1}(h) \oplus \fs_0(h) \oplus \fs_1(h)\). Then, for \(\lambda \in \{\pm 1\}\), there exists \(x \in \fs_{\lambda}(h) \cap W_f\) such that, for all \(z \in f^{-1}(\R_{> 0})\), the element \(x + z\) is an interior point of the cone
  \[W_f \cap \g_\lambda, \quad \text{where} \quad \g_\lambda := V_{\lambda/2}(\rho(h)) \oplus \fz \oplus \fs_{\lambda}(h).\]
\end{lem}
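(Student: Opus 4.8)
The plan is to reduce the statement to an explicit model in $\sp(2n,\R)$ and then verify the interior‑point condition by a completion‑of‑the‑square estimate. I begin with two harmless reductions. Since every element of $\fs$ inducing a $3$-grading is conjugate to any other under $\Inn(\fs)\subset\Inn(\g)$ (cf.\ Section \ref{sec:herm-liealg}), since $W_f$ is $\Inn(\g)$-invariant, and since an inner automorphism of $\g$ coming from $\fs$ fixes $\fz$ pointwise (hence fixes $f$) and carries $V_\mu(\rho(h))$ and $\fs_\mu(h)$ to the corresponding spaces for the conjugate of $h$, it suffices to prove the claim for one convenient $h$. Replacing $h$ by $-h$ interchanges $\fs_1(h)\leftrightarrow\fs_{-1}(h)$ and $V_{1/2}(\rho(h))\leftrightarrow V_{-1/2}(\rho(h))$, hence $\g_1\leftrightarrow\g_{-1}$, while fixing $W_f$; so I may also assume $\lambda=1$.

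Next I set up the model. By Lemma \ref{lem:symp-mod-h1-homom}(b), $\tau_V:=2\rho(h)$ is an antisymplectic involution on $(V,\Omega)$, so $V=V_+\oplus V_-$ with $V_\pm:=V_{\pm1/2}(\rho(h))$ Lagrangian and $\Omega$ restricting to a non-degenerate pairing between them; put $n:=\dim V_+=\dim V_-$. Using Lemma \ref{lem:herm-lie-3grad-h2-homo} (with the $H$-element of $\fs$ for which $\rho$ is an $(H_1)$-homomorphism, cf.\ Lemma \ref{lem:symp-mod-h1-homom}(a)) I obtain an $(H_2)$-homomorphism $\kappa\colon(\fsl(2,\R),\tfrac12 U)\to(\fs,H_0)$ with $\kappa(\tfrac12 H)=h$; then $\kappa(E)\in\fs_1(h)$, $\kappa(F)\in\fs_{-1}(h)$ and $[\kappa(E),\kappa(F)]=2h$. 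As in the proof of Lemma \ref{lem:symp-mod-h1-homom}(b), $(V,\Omega)$ is a symplectic $\fsl(2,\R)$-module of convex type via $\rho\circ\kappa$, so by Example \ref{ex:sympl-mod-conv-sl2} it is equivalent to $n$ copies of the standard representation. Hence, choosing a symplectic basis of $V$ adapted to $V=V_+\oplus V_-$ and rescaling, I may identify $\sp(V,\Omega)$ with $\sp(2n,\R)$ (standard $\Omega$, $V_+\cong\R^n\times\{0\}$, $V_-\cong\{0\}\times\R^n$) so that $\rho(h)=\tfrac12\diag(\bbone_n,-\bbone_n)$ and $\rho(\kappa(E))=\varepsilon\begin{pmatrix}0&\bbone_n\\0&0\end{pmatrix}$ for some $\varepsilon\in\{\pm1\}$, the global sign being forced by the coupling in Example \ref{ex:sympl-mod-conv-sl2}. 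I then set $x:=\varepsilon\,\kappa(E)\in\fs_1(h)$, so $\rho(x)=\begin{pmatrix}0&\bbone_n\\0&0\end{pmatrix}$, and I note that for every $x'\in\fs_1(h)$ one has $\rho(x')=\begin{pmatrix}0&B'\\0&0\end{pmatrix}$ with $B'=(B')^{\tran}$, since $[\rho(h),\rho(x')]=\rho(x')$.

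Now the computation. Recall $W_f=\Phi_f^{-1}(\hsp(V,\Omega)_+)$ with $\Phi_f(v,z,x')=(v,f(z),\rho(x'))$, and that via the identification $\varphi$ of $\hsp(V,\Omega)$ with $\Pol_{\le2}(V)$ (Example \ref{ex:jacobi-alg}) an element $(v,z,x')$ of $\g_1$ — so $v=(q,0)\in V_+$ and $\rho(x')=\begin{pmatrix}0&B'\\0&0\end{pmatrix}$ — lies in $W_f$ if and only if
\[\tfrac12\langle B'p,p\rangle+\langle q,p\rangle+f(z)\ \ge\ 0\qquad\text{for all }p\in\R^n.\]
For $x$ itself ($B'=\bbone_n$, $q=0$, $z=0$) the left-hand side equals $\tfrac12\|p\|^2\ge0$, so $x\in\fs_1(h)\cap W_f$. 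Fix now $z$ with $f(z)>0$. For $(v',z',x')\in\g_1$ close to $(0,z,x)$ the symmetric matrix $B'$ is close to $\bbone_n$, hence positive definite, $q'$ is small, and $f(z')$ stays positive; minimising the quadratic above over $p$ yields the value $-\tfrac12\langle(B')^{-1}q',q'\rangle+f(z')$, which depends continuously on $(v',z',x')$ and equals $f(z)>0$ at the centre. Thus it remains positive on a neighbourhood of $(0,z,x)$ in $\g_1$, and that neighbourhood lies in $W_f$. Therefore $x+z=(0,z,x)$ is an interior point of $W_f\cap\g_1$, which proves the claim.

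The main obstacle is the structural input of the second paragraph: knowing that some element of $\fs_1(h)$ is carried by $\rho$ to a block $\begin{pmatrix}0&B\\0&0\end{pmatrix}$ with $B$ \emph{positive definite}. An $\fsl_2$-triple by itself only forces $B$ to be invertible and symmetric; positivity genuinely uses the convex type of the module, entering through the sign coupling in the $n$-fold standard decomposition of Example \ref{ex:sympl-mod-conv-sl2} together with the $(H_1)/(H_2)$-compatibility supplied by Lemma \ref{lem:symp-mod-h1-homom}. Once the model is in place, Steps on the cone membership and interiority are the routine estimate above, and the remaining bookkeeping (signs of $\varepsilon$, the reduction to $\lambda=1$) is straightforward.
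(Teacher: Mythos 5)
Your proof is correct, and it reaches the same endgame as the paper --- reduce to the standard model in $\sp(2n,\R)$ where $\rho(h)=\tfrac12\diag(\bbone_n,-\bbone_n)$ and $\rho(x)=\pmat{0 & \bbone_n\\ 0 & 0}$, identify $\g_1$ with $\Pol_{\leq 2}(\R^n)$, and observe that $x+z$ becomes a positive definite quadratic plus a positive constant. The genuine difference lies in how the element $x$ with positive definite upper block is produced. The paper chooses an $\fsl_2$-triple $\{x,h,y\}$ together with a compatible Cartan involution and invokes the Kantor--Koecher--Tits construction from \cite{Oeh20}, making $\fs_1(h)$ a simple euclidean Jordan algebra with unit $x$; as a by-product it records that $W_f\cap\fs_{\pm1}(h)$ is (up to sign) the cone of squares, hence generating. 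You instead route everything through the $(H_2)$-homomorphism $\kappa\colon(\fsl(2,\R),\tfrac12 U)\to(\fs,H_0)$ of Lemma \ref{lem:herm-lie-3grad-h2-homo} and the classification of symplectic $\fsl(2,\R)$-modules of convex type in Example \ref{ex:sympl-mod-conv-sl2}, where the sign coupling forced by convexity is exactly what makes the block definite rather than merely invertible symmetric --- you correctly identify this as the crux. Your version is more self-contained (it uses only results stated in this paper rather than the Jordan-theoretic lemmas of \cite{Oeh20}) and your explicit completion-of-the-square argument actually fills in a point the paper leaves terse: strict positivity of a degree-$\leq 2$ polynomial alone does not give an interior point of the cone of non-negative polynomials (consider the constant $1$); one needs the positive definiteness of the quadratic part, which is precisely what your minimisation over $p$ uses. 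The paper's route, in exchange, yields the additional structural information about $W_f\cap\fs_{\pm1}(h)$ via the cone of squares. Minor quibbles only: the generators $E,F$ of $\fsl(2,\R)$ are not introduced in the paper, so you should fix the convention $[E,F]=H$, $U=E-F$ explicitly; and the sign bookkeeping via $\varepsilon$ deserves one more sentence explaining that $\varepsilon\kappa(E)$ still lies in $\fs_1(h)$, which is immediate but worth saying.
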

\begin{proof}
  Since \(\fs\) is simple, we may without loss of generality assume that \(\fs \subset \sp(V,\Omega)\).
  Suppose that \(\lambda = 1\). 
  By Lemma \ref{lem:symp-mod-h1-homom}, the adjoint representation of \(h\) on \(\sp(V,\Omega)\) also induces a 3-grading on this Lie algebra.
  We choose elements \(x \in \fs_1(h), y \in \fs_{-1}(h),\) and a Cartan involution \(\theta\) of \(\sp(V,\Omega)\) such that \(\fh := \la x,h,y\ra_{\mathrm{Lie}} \cong \fsl(2,\R)\) and such that the restriction \(\theta\lvert_{\fh}\) coincides with the standard Cartan involution \(X \mapsto -X^{\tran}\) on \(\fsl(2,\R)\) (cf.\ \cite[Ex.\ 2.29]{Oeh20}).  Using the Kantor--Koecher--Tits construction, we can thus endow \(\sp(V,\Omega)_1(h)\) with the structure of a simple euclidean Jordan algebra in such a way that \(x\) is a Jordan unit (cf.\ \cite[Ex.\ 2.25]{Oeh20}). In particular, there exists a symplectic basis of \(V\) such that \(h\) and \(x\) are represented by the matrices
  \[h = \frac{1}{2} \pmat{\bbone & 0 \\ 0 & -\bbone} \quad \text{and} \quad x = \pmat{0 & \bbone \\ 0 & 0}.\]

  Since \((V,\Omega)\) is a symplectic \(\fs\)-module of convex type, the intersection \(W_f \cap \fs\) is a non-trivial and thus generating invariant closed convex cone in \(\fs\). By \cite[Lem.\ 2.30]{Oeh20}, the intersection \(W_f \cap \fs_{1}(h)\) thus coincides up to sign with the cone of squares in the Jordan algebra \(\fs_1(h)\), which generates \(\fs_{1}(h)\). Similarly, we see that \(W_f \cap \fs_{-1}(h)\) generates \(\fs_{-1}(h)\).

  Let \(n := \frac{\dim V}{2}\). Recall from Example \ref{ex:jacobi-alg} that we can identify \(\hsp(\R^{2n})\) with the Lie algebra \(\Pol_{\leq 2}(\R^{2n})\) of polynomials of degree at most \(2\) and that \(\hsp(\R^{2n})_+\) can be identified with the cone of non-negative polynomials.
  Consequently, by using the above descriptions of \(h\) and \(x\), we can identify \(\g_1\) with the space of polynomials of degree at most \(2\) in \(n\) variables \(x_1,\ldots,x_n\).
  Then \(x\) corresponds to the polynomial \(p_x(x_1,\ldots,x_n) := \sum_{k=1}^n x_k^2\).
  Thus, for every \(z \in \fz\) such that \(f(z) > 0\), the element \(x + z\) corresponds to a strictly positive polynomial in \(n\) variables.
  Hence \(x + z\) is an interior point in \(W_f \cap \g_1\).
  The case \(\lambda = -1\) can be treated analogously.
\end{proof}

We now prove our main result: For every admissible Lie algebra \(\g\) with a 3-grading \linebreak\(\g = \g_{-1} \oplus \g_0 \oplus \g_1\) induced by a derivation and every generating invariant cone \(W_f\) induced by Spindler's construction, the intersection of \(W_f\) with \(\g_{\pm 1}\) generates \(\g_{\pm 1}\).

\begin{thm}{\rm (Non-degeneration Theorem)}
  \label{thm:adm-3grad-intcones}
  Let \(\g = \g(\fl, V, \fz, \beta)\) be an admissible Lie algebra and let \(D \in \der(\g)\) such that \(\g = \g_{-1}(D) \oplus \g_0(D) \oplus \g_1(D)\). Let \(f \in \fz^*\) be such that \((V,f \circ \beta)\) is a symplectic \(\fl\)-module of convex type. Then \(\spann(W_f \cap \g_{\pm 1}(D)) = \g_{\pm 1}(D)\).
\end{thm}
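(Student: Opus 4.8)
The plan is to put $\g$ and $D$ into the normal form of the Classification Theorem \ref{thm:adm-deriv-3grad}, read off $\g_{\pm1}(D)$ explicitly, and then produce enough elements of $W_f\cap\g_{\pm1}(D)$ to span. By Remark \ref{rem:spindler-assumption-center} we may assume $\fz=[V,V]$ and that $\fl$ acts effectively on $V$, and by the symmetry $h\mapsto -h$ it suffices to treat $\g_1(D)$. Write $\fs:=[\fl,\fl]$ and $\Omega:=f\circ\beta$. Using $D(\fz(\fl))=\{0\}$, $D\vert_\fs=\ad(h)$ with $\fs=\fs_{-1}(h)\oplus\fs_0(h)\oplus\fs_1(h)$, $D\vert_V=D_V+\rho(h)$, and $(D\vert_\fz)_1=[V_{1/2}(D_V),V_{1/2}(D_V)]$ from Theorem \ref{thm:adm-deriv-3grad} and its part (b), a direct inspection of the $D$-eigenspaces on the summands of $\g=V\oplus\fz\oplus\fs\oplus\fz(\fl)$ (recall $D_V$ and $\rho(h)$ commute and both have spectrum in $\{0,\pm\tfrac12\}$) yields
\[
\g_1(D)\;=\;\bigl(V_{1/2}(D_V)\cap V_{1/2}(\rho(h))\bigr)\;\oplus\;[V_{1/2}(D_V),V_{1/2}(D_V)]\;\oplus\;\fs_1(h).
\]
I will show each of these three summands lies in $\spann(W_f\cap\g_1(D))$; the reverse inclusion is trivial.

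Recall from Example \ref{ex:jacobi-alg} that under $\hsp(V,\Omega)\cong\Pol_{\leq2}(V)$ the cone $\hsp(V,\Omega)_+$ is the cone of pointwise non-negative polynomials, so $(v,z,x)\in W_f=\Phi_f^{-1}(\hsp(V,\Omega)_+)$ is equivalent to $\tfrac12\Omega(\rho(x)u,u)+\Omega(v,u)+f(z)\geq0$ for all $u\in V$. The central summand is then immediate: for $z\in\fz_1(D)=[V_{1/2}(D_V),V_{1/2}(D_V)]$ the associated polynomial is the constant $f(z)$, so $z\in W_f$ if $f(z)\geq0$ and $-z\in W_f$ otherwise. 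For the summand $\fs_1(h)$, decompose $\fs=\fs_0\oplus\bigoplus_k\fs_k$ into its compact part and hermitian simple ideals and write $h=\sum_k h_k$; by Theorem \ref{thm:adm-deriv-3grad}(a) the ideals $\fs_\ell$ with $h_\ell\neq0$ are exactly those acting non-trivially on $V_D:=V_{1/2}(D_V)\oplus V_{-1/2}(D_V)$, they are of tube type, and they act on pairwise $\Omega$-orthogonal convex-type submodules $V_{\ell,\eff}\subseteq V_D$ with $\rho(h)=\rho(h_\ell)$ on $V_{\ell,\eff}$ and $\rho(h)=0$ on $\ker\rho(h)=\ker(D_V)$. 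Applying Lemma \ref{lem:adm-3grad-intpoints} to the admissible building block $\g(\fs_\ell,V_{\ell,\eff},[V_{\ell,\eff},V_{\ell,\eff}],\beta\vert)$ and noting that on $\fs_{\ell,1}(h_\ell)$ its cone coincides with $W_f\cap\fs_{\ell,1}(h_\ell)$ (the polynomial of $(0,0,x)$ only sees $\rho(x)$, which is supported on $V_{\ell,\eff}$), we get that $W_f\cap\fs_{\ell,1}(h_\ell)$ generates $\fs_{\ell,1}(h_\ell)$; summing over $\ell$ shows $W_f\cap\fs_1(h)$ generates $\fs_1(h)$.

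The main point is the vector summand $V_1(D)=V_{1/2}(D_V)\cap V_{1/2}(\rho(h))$, since $(v,0,0)\in W_f$ forces $v=0$. For each tube-type ideal $\fs_\ell$ above, the proof of Lemma \ref{lem:adm-3grad-intpoints} (equivalently, composing with $\rho$ the $(H_2)$-homomorphism $\kappa_\ell\colon(\fsl(2,\R),\tfrac12U)\to(\fs_\ell,H_0)$ with $\kappa_\ell(\tfrac12H)=h_\ell$ of Lemma \ref{lem:herm-lie-3grad-h2-homo}, via Lemma \ref{lem:symp-mod-h1-homom} and Example \ref{ex:sympl-mod-conv-sl2}) provides $x_\ell\in\fs_{\ell,1}(h_\ell)\cap W_f$ represented in a suitable symplectic basis of $V_{\ell,\eff}$ by $\bigl(\begin{smallmatrix}0&\bbone\\0&0\end{smallmatrix}\bigr)$, so that $\rho(x_\ell)$ lies in the positive cone of $\sp(V_{\ell,\eff},\Omega_\ell)$ and $\im\rho(x_\ell)=V_{1/2}(\rho(h_\ell))$. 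Put $x_0:=\sum_\ell x_\ell\in\fs_1(h)$; then $\rho(x_0)$ lies in the positive cone of $\sp(V,\Omega)$ and $\im\rho(x_0)=V_{1/2}(\rho(h))$. For any $X\in\sp(V,\Omega)$ with $\Omega(Xu,u)\geq0$ for all $u$, the polarization of $u\mapsto\Omega(Xu,u)$ is $\Omega(X\cdot,\cdot)$, which is symmetric because $X^\#=-X$, so the zero set of this (positive semidefinite) quadratic form equals $\ker X=(\im X)^{\perp_\Omega}$. Hence for $v\in V_1(D)\subseteq V_{1/2}(\rho(h))=\im\rho(x_0)$ the linear form $u\mapsto\pm\Omega(v,u)$ vanishes on the zero set of $u\mapsto\tfrac12\Omega(\rho(x_0)u,u)$, and therefore $u\mapsto\tfrac12\Omega(\rho(x_0)u,u)\pm\Omega(v,u)+c$ is non-negative once $c>0$ is large enough. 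Choosing $z\in\fz_1(D)$ with $f(z)=c$ — possible since $\Omega$ is non-degenerate on the convex-type submodule $V_{1/2}(D_V)$, whence $f([V_{1/2}(D_V),V_{1/2}(D_V)])=\R$ whenever $V_1(D)\neq\{0\}$ — gives $(v,z,x_0),(-v,z,x_0)\in W_f\cap\g_1(D)$, whose difference is $(2v,0,0)$; thus $V_1(D)\subseteq\spann(W_f\cap\g_1(D))$. Combining the three cases gives $\g_1(D)=V_1(D)+[V_{1/2}(D_V),V_{1/2}(D_V)]+\fs_1(h)\subseteq\spann(W_f\cap\g_1(D))$, and $\g_{-1}(D)$ follows by the same argument with $h$ replaced by $-h$. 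The hardest part is the third paragraph: producing an $x_0\in W_f\cap\fs_1(h)$ whose image under $\rho$ is exactly $V_{1/2}(\rho(h))$ and checking, in the polynomial model of $\hsp(V,\Omega)_+$, that perturbing it by an arbitrary $v\in V_1(D)$ and a large central element remains in the cone; a mild additional complication is that $\fl$ need not be simple, so the Jordan-theoretic input of Lemma \ref{lem:adm-3grad-intpoints} has to be applied to each tube-type ideal separately.
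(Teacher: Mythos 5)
Your proof is correct and follows essentially the same route as the paper: reduce to the normal form of the Classification Theorem \ref{thm:adm-deriv-3grad}, decompose \(\fs\) into hermitian tube-type ideals acting on \(\Omega\)-orthogonal convex-type submodules, and feed this into Lemma \ref{lem:adm-3grad-intpoints} together with the polynomial model of \(\hsp(V,\Omega)_+\). The only real difference is in the endgame: the paper exhibits a single interior point \(x+z\) of \(W_f\cap\g_1(D)\), which spans everything at once, whereas you span the three graded summands separately, your perturbation argument for the \(V\)-summand being precisely the ``strictly positive polynomial'' mechanism that underlies the paper's interior-point claim.
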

\begin{proof}
  We define \(\Omega := f \circ \beta\) and \(\fs := [\fl,\fl]\) as the semisimple part of \(\fl\).
  By Theorem \ref{thm:adm-deriv-3grad} and Remark \ref{rem:spindler-jacobi-hom-reducomp}, we may without loss of generality assume that \(D\) is of the form
  \[D(v,z,x) = (D_V v + \rho(h)v, D_\fz z, [h,x]), \quad \text{for } v \in V, z \in \fz, x \in \fl,\]
  and some \(D_V \in \End_\fl(V), D_\fz \in \End(\fz),\) and \(h \in \fs\).
  Moreover, \(\ad_\fs(h)\) induces a 3-grading on \(\fs\), the pair \((D_V,D_\fz)\) satisfies \eqref{eq:deriv-spindler-perfect-Dz}, and we have \(\ker(\rho(h)) = \ker(D_V)\).
  We may also assume that \(\fl\) acts effectively on \(V\) (cf.\ Remark \ref{rem:spindler-assumption-center}(b)).

  We decompose \(\fs\) into a direct sum \(\fs = \fs_0 \oplus \bigoplus_{k=1}^m \fs_k\), where \(\fs_0\) is compact and \(\fs_k\) is a hermitian simple ideal of \(\fs\) for \(1 \leq k \leq m\).
  Moreover, we decompose \((V,\Omega)\) into an \(\Omega\)-orthogonal direct sum \(V = V_0 \oplus \bigoplus_{k=1}^m V_k\) of submodules such that every hermitian simple ideal of \(\fs\) acts trivially on \(V_0\) and \(\fs_k\) acts on \(V_\ell\) non-trivially if and only if \(k = \ell\) for \(1 \leq k,\ell \leq m\) (cf.\ Theorem \ref{thm:sympmodconv-decomp}).
  Since \(\ker(\rho(h)) = \ker(D_V)\) and all ideals \(\fs_k\) acting non-trivially on \(V_{\pm 1}(D)\) for \(1 \leq k \leq m\) are hermitian simple and of tube type (cf.\ Theorem \ref{thm:adm-deriv-3grad}), it suffices to prove the claim in the case where all hermitian simple ideals of \(\fs\) are of tube type.
  Moreover, we may without loss of generality assume that \([h, \fs_k] \neq \{0\}\) and that each \(V_k\) is a symplectic \(\fs_k\)-module of convex type for \(1 \leq k \leq m\) (cf.\ Lemma \ref{lem:sympmodconv-simple-herm-ideal} and Theorem \ref{thm:adm-deriv-3grad}(a)).

  Let \(\beta_k := \beta\lvert_{V_k \times V_k}\) and write \(h = \sum_{k=1}^m h_k\), where \(h_k \in \fs_k\) for \(1 \leq k \leq m\). By applying Lemma \nolinebreak\ref{lem:adm-3grad-intpoints} to the admissible Lie algebras \(\g(\fs_k, V_k, \fz, \beta_k)\), we obtain elements \(x_k \in \fs_1(h) \cap \fs_k\) such that, for all \(z \in f^{-1}(\R_{> 0}) \cap \fz_1(D) \neq \{0\}\), the element \(x_k + z\) is an interior point of the cone
  \[W_f \cap (V_{1/2}(h_k) \oplus \fz \oplus (\fs_1(h) \cap \fs_k))\]
  Since the modules \(V_{1/2}(h_k)\) and \(V_{1/2}(h_\ell)\) are \(\Omega\)-orthogonal and \(\fs_k.V_\ell = \{0\}\) for \(k \neq \ell\), the element \(x := \sum_{k=1}^m x_k\) has the property that \(x + z\) is contained in the interior of the cone
  \[W_f \cap (V_{1/2}(h) \oplus \fz \oplus \fs_1(h)).\]
  Furthermore, as \(x + z \in \fz_1(D) \oplus \fs_1(h)\), it is in particular contained in the interior of the cone
  \[W_f \cap ((V_{1/2}(h) \cap V_{1/2}(D_V)) \oplus \fz_1(D) \oplus \fs_1(h)) = W_f \cap \g_1(D).\]
  In particular, \(W_f \cap \g_1(D)\) has non-empty interior, so that it generates \(\g_1(D)\). Analogously, one can show that \(W_f \cap \g_{-1}(D)\) generates \(\g_{-1}(D)\).
\end{proof}

\begin{rem}
  \label{rem:gen-sp-deriv}
  Theorem \ref{thm:adm-deriv-3grad} shows that, in order to determine the derivations of an admissible Lie algebra \(\g = \g(\fl,V,\fz,\beta)\) which induce 3-gradings, an important step is to determine those pairs \((D_V, D_\fz) \in \End(V) \times \End(\fz)\) which satisfy the equation
  \[\beta(D_V v, w) + \beta(v, D_V w) = D_\fz \beta(v,w) \quad \text{for all } v,w \in V.\]
  In case \(\beta\) is given explicitly, the following observation might be useful: Recall from Theorem \ref{thm:adm-gen-jacobi-alg} that the Lie algebra \(\sp(V,\beta)\) is reductive and admissible because \(\g\) is admissible.
  Let \(X \in \sp(V,\beta)\) and let \((D_V,D_\fz)\) be a pair as above.
  Then Proposition \ref{prop:deriv-gen-heisenberg} shows that \(\ad(D_V)(X) \in \sp(V,\beta)\).
  In particular, \(\ad(D_V)\) restricts to a derivation of the semisimple Lie algebra \([\sp(V,\beta), \sp(V,\beta)]\).
  Hence, there exists \(E_V \in \sp(V,\beta)\) and \(F_V \in \End_{[\sp(V,\beta), \sp(V,\beta)]}(V)\) such that \(D_V = E_V + F_V\).
  In particular, the pair \((F_V, D_\fz)\) satisfies the above equation. 
\end{rem}

\begin{example}
  \label{ex:adm-wedge-center-2dim}
  Let \((V,\Omega)\) be a symplectic vector space and let \(\fs := \sp(V,\Omega)\). We define on the symplectic vector space \(W := (V \oplus V, \Omega \oplus \Omega)\) a representation of \(\fs\) via
  \[\rho: \sp(V,\Omega) \rightarrow \sp(V \oplus V, \Omega \oplus \Omega), \quad \rho(x)(v_1,v_2) := (\rho(x)v_1,\rho(x)v_2).\]
  Moreover, we consider the \(\fs\)-invariant bilinear map
  \[\beta : (V \oplus V) \times (V \oplus V) \rightarrow \R^2 \quad q(v,w) := (\Omega(v_1,w_1),\Omega(v_2,w_2)).\]
  Setting \(f(\lambda,\mu) := \lambda + \mu\), where \(\lambda,\mu \in \R\), we obtain a symplectic \(\fs\)-module \((V \oplus V, f \circ \beta)\) of convex type, so that \(\g := \g(\fs,V \oplus V, \R^2,\beta)\) is a Lie algebra containing a pointed generating invariant closed convex cone by Theorem \ref{thm:spindler-adm}.

  Let \(h \in \fs\) be such that \(\fs = \fs_{-1}(h) \oplus \fs_0(h) \oplus \fs_1(h)\). Then \(\spec(\ad \rho(h)) = \{0, \pm 1\}\) and therefore \(\spec(\rho(h)) = \{\pm \frac{1}{2}\}\) (cf.\ Example \ref{ex:herm-sp}). Moreover, consider the linear maps
  \[D_V(v_1,v_2) := \tfrac{1}{2}(v_1,-v_2), \, v_1,v_2 \in V, \quad \text{and} \quad D_\fz(\lambda,\mu) := (\lambda,-\mu), \, \lambda,\mu \in \R.\]
  Then
  \[D(v,z,x) := (D_Vv + \rho(h)v, D_\fz z, [h,x]), \quad (v,z,x) \in V \times \R^2 \times \fs,\]
  defines a derivation on \(\g\) by Lemma \ref{lem:deriv-spindler-perfect}. The derivation \(D\) induces a 3-grading on \(\g\) with
  \[\g_0(D) = (V_{-1/2}(D_V) \cap V_{1/2}(\rho(h))) + (V_{1/2}(D_V) \cap V_{-1/2}(\rho(h))) \oplus \fs_0(h) \quad \text{and}\]
  \[\g_{\pm 1}(D) = (V_{\pm 1/2}(D_V) \cap V_{\pm 1/2}(\rho(h))) \oplus \R^2_{\pm 1}(D_\fz) \oplus \fs_{\pm 1}(h).\]
  The cone \(W_f \cap \g_{\pm 1}(D)\) generates \(\g_{\pm 1}(D)\) by Theorem \ref{thm:adm-3grad-intcones}.
\end{example}

\begin{example}
  Let \(\fs\) be a simple hermitian Lie algebra of tube type and let \((V,\Omega)\) be a symplectic \(\fs\)-module of convex type. We denote the corresponding representation of \(\fs\) by \(\rho: \fs \rightarrow \sp(V,\Omega)\). Then the exterior product \(V \wedge V\) becomes a semisimple \(\fs\)-module with the usual action of \(\fs\), so that
  \[V \wedge V = (V \wedge V)_{\fix} \oplus (V \wedge V)_{\eff}\]
  is a direct sum of \(\fs\)-modules. We denote by \(p : V \wedge V \rightarrow (V \wedge V)_{\fix}\) the projection onto \((V \wedge V)_{\fix}\) along this decomposition. Consider now the skew-symmetric bilinear map
  \[\beta: V \times V \rightarrow (V \wedge V)_{\fix}, \quad (v,w) \mapsto p(v \wedge w).\]
  By construction, we have \(\rho(\fs) \subset \sp(V,\beta)\). Hence, we can apply Spindler's construction to obtain a Lie algebra \(\g := \g(\fs, V, (V \wedge V)_{\fix}, \beta)\). If we interpret the skew-symmetric bilinear form \(\Omega\) as a linear functional on \(V \wedge V\), then we see that
  \[\Omega((V \wedge V)_{\eff}) = \Omega(\spann(\fs.(V \wedge V))) = \{0\},\]
  so that \((\Omega \circ \beta)(v,w) = \Omega(v,w)\) for all \(v,w \in V\). In particular, \(\g\) is admissible by Theorem \ref{thm:spindler-adm} because \((V,\Omega)\) is of convex type.

  (a) We study the derivations of \(\g\) that induce 3-gradings.

  We first note that the isomorphism
  \[\varphi_\Omega : (V \otimes V)_{\fix} \rightarrow \End_\fs(V), \quad \text{determined by} \quad \varphi_\Omega(v \otimes w)(a) := \Omega(v, a)w, \quad v,w,a \in V,\]
  maps the subspace \((V \wedge V)_{\fix}\) onto the set \(\End_\fs(V)^\#\) of \(\Omega\)-symmetric module endomorphisms, so that \((V \wedge V)_{\fix} \cong \End_\fs(V)^\#\). We assume from now on that \(V\) is isotypic because non-isomorphic submodules of \(V\) are \(\beta\)-orthogonal.

  Let \(V_1,V_2 \subset V\) be two non-trivial submodules of \(V\) with \(V_1 \cap V_2 = \{0\}\). Then \linebreak\(\beta(V_1,V_1) \cap \beta(V_2,V_2) = \{0\}\) because 
  \[\varphi_\Omega(\beta(V_k,V_k))(V) \subset V_k \quad \text{for } k=1,2.\]
  On the other hand, we see that \(\varphi_\Omega(\beta(V_k, V_\ell))\) is a set of \(\Omega\)-symmetric homomorphism from \(V_k\) to \(V_\ell\) when restricted to \(V_k\), where \(k,\ell \in \{1,2\}\).

  This observation implies that we have very few restrictions when constructing derivations on \(\g\) that induce 3-gradings. In fact, every derivation in \(\der(\g,\fs)\) with this property can be constructed as follows:
  Let \(h \in \fs\) such that \(\fs = \fs_{-1}(h) \oplus \fs_0(h) \oplus \fs_1(h)\) and let \(V = V_{-1} \oplus V_1\) be a decomposition of \(V\) into two submodules.  We define \(D_V \in \End_\fs(V)\) by \(D_V v := \frac{\lambda}{2} v\) for \(\lambda \in \{\pm 1\}, v \in V_\lambda\).
  Then the linear map \(D_\fz \in \End(\fz)\) defined by \(D_\fz v := (\lambda + \lambda') v\) for \(v \in \beta(V_\lambda, V_{\lambda'})\) and \(\lambda,\lambda' \in \{\pm 1\}\) is well defined because
  \[\fz = \bigoplus_{\lambda,\lambda' \in \{\pm 1\}} \beta(V_\lambda, V_{\lambda'}).\]
  Thus, the derivation defined by
  \[D(v,z,x) := (\rho(h)v + D_V v, D_\fz z, [h,x]) \quad \text{for } (v,z,x) \in \g\]
  induces a 3-grading on \(\g\). Conversely, every derivation of \(\g\) is of this form by Theorem \ref{thm:adm-deriv-3grad}.
  Notice that we did not assume that the decomposition \(V = V_{-1} \oplus V_1\) is \(\Omega\)-orthogonal in the argument above. In particular, this construction yields examples of derivations \(D\) where \(D_V\) is not symmetric with respect to \(\beta\).

  (b) Since \(\g\) is admissible, the generalized symplectic Lie algebra \(\sp(V,\beta)\) is reductive and admissible as well by Theorem \ref{thm:adm-gen-jacobi-alg}.
  The arguments made in the beginning of this example show that the linear functionals \(g \in \fz(\g)^*\) for which \((V, g \circ \beta)\) is a symplectic \(\fs\)-module of convex type naturally correspond to symplectic forms \(\Omega_g\) on \(V\) for which \((V,\Omega_g)\) is a symplectic \(\fs\)-module of convex type.
  Such a symplectic form is uniquely determined by an \(\Omega\)-symmetric module automorphism \(\Psi_g \in \Aut_\fs(V)^\#\) such that
  \[\Omega_g(v,w) = \Omega(\Psi_g v, w) \quad \text{for all } v,w \in V.\]
  Conversely, every \(\Omega\)-symmetric module automorphism \(\Psi \in \Aut_\fs(V)^\#\) specifies a symplectic form \(\Omega_\Psi\) such that \((V,\Omega_\Psi)\) is a symplectic \(\fs\)-module of convex type (cf.\ \cite[Lem.\ I.15]{Ne94a}). Combining this with the alternative characterization of \(\sp(V,\beta)\) from Remark \ref{rem:gen-symp-alg-char}, we obtain
  \[\sp(V,\beta) = \{x \in \sp(V,\Omega) : (\forall \Psi \in \Aut_\fs(V)^\#)\, [x,\Psi] = 0\}.\]
\end{example}

\appendix

\section{Algebraic groups and algebraic Lie subalgebras}
\label{sec:app-alg-grp}

\begin{definition}
  Let \(V\) be a real finite dimensional vector space and let \(\g \subset \gl(V)\).
  Then \(\g\) is called \emph{algebraic} if there exists an algebraic subgroup \(G \subset \GL(V)\) with \(\g = \L(G)\).
  We denote by \(a(\g)\) the smallest algebraic subalgebra of \(\gl(V)\) containing \(\g\).
  Note that the set of algebraic Lie subalgebras is stable under intersections.
\end{definition}

\begin{lem}
  \label{lem:subalg-algebraic-liebrack}
  Let \(V\) be a finite dimensional \(\K\)-vector space, where \(\K\) is a field of characteristic \(0\). Let \(\g,\fh \subset \gl(V)\) be Lie subalgebras. Then the following holds:
  \begin{enumerate}
    \item \([\fh,\fh] = [a(\fh),a(\fh)]\) is algebraic.
    \item If \([\fh,\g] \subset \g\), then \(a(\fh + \g) = a(\g) + a(\fh)\).
  \end{enumerate}
\end{lem}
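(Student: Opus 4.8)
The plan is to prove the two statements about algebraic hulls of Lie subalgebras of $\gl(V)$ in characteristic zero, relying on the standard theory of algebraic groups (the Chevalley–type correspondence between algebraic subgroups and algebraic subalgebras) and on the fact that commutators of algebraic groups are algebraic.

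\textbf{Part (a).} First I would recall the basic fact that for any Lie subalgebra $\fh \subset \gl(V)$, the derived algebra $[\fh,\fh]$ is already algebraic; this is a classical result (Chevalley), whose proof runs via the observation that the commutator subgroup of a connected algebraic group is a closed (hence algebraic) connected subgroup, so that $[\fh,\fh] = [a(\fh),a(\fh)]$ is the Lie algebra of the commutator subgroup of the algebraic hull $A(\fh)$ of the integral subgroup $\langle e^{\fh}\rangle$. The inclusion $[\fh,\fh] \subset [a(\fh),a(\fh)]$ is clear since $\fh \subset a(\fh)$; for the reverse inclusion I would use that $a(\fh)/[\fh,\fh]$ is abelian — indeed $a(\fh)$ normalizes $[\fh,\fh]$ (because $\fh$ does, and the normalizer of an algebraic ideal is algebraic), and modulo $[\fh,\fh]$ the image of $\fh$ is abelian, hence its algebraic hull in the quotient, which is the image of $a(\fh)$, is abelian as well (the algebraic hull of an abelian subalgebra is abelian, again a standard fact). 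Thus $[a(\fh),a(\fh)] \subset [\fh,\fh]$, giving equality, and the common value is algebraic.

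\textbf{Part (b).} Assume $[\fh,\g] \subset \g$, so $\g$ is an ideal in the Lie algebra $\fh + \g$. The inclusion $a(\fh)+a(\g) \supset \fh + \g$ together with the fact (to be checked) that $a(\fh) + a(\g)$ is an algebraic subalgebra gives $a(\fh+\g) \subset a(\fh)+a(\g)$. The reverse inclusion follows once we know $a(\fh) \subset a(\fh+\g)$ and $a(\g) \subset a(\fh+\g)$, which hold because $a(\fh+\g)$ is an algebraic subalgebra containing $\fh$ and $\g$ respectively, hence contains their hulls by minimality. So the crux is to show $a(\fh) + a(\g)$ is algebraic. For this I would argue that $a(\g)$ is an ideal in $a(\fh)+a(\g)$: the normalizer $\fn := \{x \in \gl(V) : [x,a(\g)] \subset a(\g)\}$ is an algebraic subalgebra (it is the Lie algebra of the normalizer of the corresponding algebraic group, which is closed), and it contains $\fh$ (since $[\fh,\g]\subset\g$ implies $[\fh,a(\g)]\subset a(\g)$, because the set of $x$ with $[x,a(\g)]\subset a(\g)$ is algebraic and contains $\fh$ once it contains $\g$) as well as $a(\g)$; hence $a(\fh) \subset \fn$, i.e. $[a(\fh),a(\g)]\subset a(\g)$, so $a(\fh)+a(\g)$ is a Lie subalgebra with $a(\g)$ as an ideal. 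Passing to the quotient algebraic group, $a(\fh)$ surjects onto an algebraic subalgebra of $\gl(V)/a(\g)$-type and the sum $a(\fh)+a(\g)$ is the preimage of an algebraic subalgebra under the quotient, hence algebraic. (Alternatively, and perhaps cleanest: the product $A(\fh)\cdot A(\g)$ of the two corresponding algebraic groups is a subgroup because $A(\g)$ is normalized by $A(\fh)$, and a product of an algebraic subgroup with a normalizing algebraic subgroup is algebraic, with Lie algebra $a(\fh)+a(\g)$.)

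\textbf{Main obstacle.} The genuinely delicate point is establishing that $a(\fh)$ normalizes $a(\g)$ from the hypothesis $[\fh,\g]\subset\g$ — equivalently, that the property ``$x$ normalizes $a(\g)$'' propagates from the generators $\fh$ to the whole algebraic hull $a(\fh)$. I would handle this by the ``algebraic hull is contained in any algebraic subalgebra containing the generators'' principle: the normalizer $\fn_{\gl(V)}(a(\g))$ is algebraic, and it contains $\g$ (hence $a(\g)$, and in fact it contains $\fh$ directly from $[\fh,\g]\subset\g$ after first upgrading this to $[\fh,a(\g)]\subset a(\g)$ via the same principle applied to the algebraic subalgebra $\{x : [x,\g]\subset\g\}\cap\{x:[\g,x]\subset a(\g)\}$-type conditions); therefore $a(\fh)\subset\fn_{\gl(V)}(a(\g))$. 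Once the normalization is in hand, the rest is the formal manipulation above. I would also make sure to invoke only the facts the paper allows itself in Appendix~\ref{sec:app-alg-grp}, namely that algebraic subalgebras are closed under intersection, that derived algebras are algebraic, and that Lie algebras of closed subgroups (normalizers, commutator subgroups, products) are algebraic.
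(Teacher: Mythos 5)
The paper does not actually prove this lemma: its ``proof'' is the citation \emph{cf.\ \cite[Prop.\ I.6]{Ne94b}} (the result goes back to Chevalley; see also Hochschild or Bourbaki, Ch.\ VII, \S 5). So there is no in-paper argument to compare with, and your sketch has to stand on its own. Its overall architecture --- the formal reductions in (b), the use of the derived group of $A(\fh)$ in characteristic $0$, and the principle that $a(\fh)$ lies in every algebraic subalgebra containing $\fh$ --- is the standard one. But two steps do not work as written.

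In part (a), your reverse inclusion passes to ``the quotient $a(\fh)/[\fh,\fh]$'' and invokes ``algebraic hulls in the quotient''. This is circular: to have an algebraic quotient in which hulls make sense, $[\fh,\fh]$ would have to be the Lie algebra of a normal algebraic subgroup of $A(\fh)$, which is exactly what you are trying to prove. The repair avoids quotients and uses your own propagation principle twice: for subspaces $F\subseteq E$ of $\gl(V)$, the set $\{g\in\GL(V): \Ad(g)E=E,\ \Ad(g)F=F,\ (\Ad(g)-\1)E\subseteq F\}$ is an algebraic subgroup with Lie algebra $\{x: [x,E]\subseteq F,\ [x,F]\subseteq F\}$. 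Applying this with $E=\fh$, $F=[\fh,\fh]$ gives $[a(\fh),\fh]\subseteq[\fh,\fh]$; applying it again with $E=a(\fh)$, $F=[\fh,\fh]$ (the hypothesis $\fh\subseteq\{x:[x,a(\fh)]\subseteq[\fh,\fh]\}$ now being available from the first step) gives $[a(\fh),a(\fh)]\subseteq[\fh,\fh]$. In part (b), the crux you correctly identify --- upgrading $[\fh,\g]\subseteq\g$ to $[a(\fh),a(\g)]\subseteq a(\g)$ --- is handled by an argument that is garbled as stated: ``the set of $x$ with $[x,a(\g)]\subseteq a(\g)$ \dots contains $\fh$ once it contains $\g$'' is a non sequitur, since containing $\g$ gives no information about $\fh$. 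The correct mechanism is to pass through the stabilizer $N:=\{g\in\GL(V):\Ad(g)\g=\g\}$ of the \emph{subspace} $\g$: it is algebraic, its Lie algebra $\{x:[x,\g]\subseteq\g\}$ contains $\fh$ and $\g$, hence $A(\fh),A(\g)\subseteq N$; and every $g\in N$ satisfies $\Ad(g)a(\g)=a(\Ad(g)\g)=a(\g)$ because conjugation is an automorphism of the algebraic group $\GL(V)$ and therefore commutes with taking algebraic hulls. This yields $A(\fh)\subseteq N\subseteq N_{\GL(V)}(A(\g))$, after which your product argument $A(\fh)\cdot A(\g)$ goes through. With these two repairs the proposal becomes a correct (and self-contained) proof of the cited result.
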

\begin{proof}
  cf.\ \cite[Prop.\ I.6]{Ne94b}.
\end{proof}

\begin{lem}
  \label{lem:lie-adm-deriv-alg}
  Let \(\g\) be an admissible Lie algebra. Let \(D \in \der(\g)\) such that
  \[\g = \g_{-1}(D) \oplus \g_0(D) \oplus \g_1(D).\]
  Then the following holds:
  \begin{enumerate}
    \item \(a(\ad \g) = \ad \g + \ft_r\), where \(\ft_r \subset \der(\g)\) is an abelian Lie algebra.
    \item The Lie algebra \(\g_D := a(\ad(\g)) + \R D \subset \gl(\g)\) is algebraic.
  \end{enumerate}
\end{lem}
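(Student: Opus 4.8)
The plan is to treat (a) and (b) separately. For (a) I would use Spindler's construction to split $\ad\g$ into an abelian unipotent part coming from the maximal nilpotent ideal, a semisimple part coming from a Levi complement, and a toral part coming from the centre of a reductive complement; only the toral part enlarges when one passes to the algebraic hull, and it stays abelian. For (b) I would reduce everything to Lemma \ref{lem:subalg-algebraic-liebrack}(b) once I know that $\R D$ is algebraic and that $D$ normalises $a(\ad\g)$.

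Here is how I would carry out (a). Since $\Aut(\g)$ is a real algebraic subgroup of $\GL(\g)$, the Lie algebra $\der(\g) = \L(\Aut(\g))$ is algebraic, so $a(\ad\g)\subseteq\der(\g)$; in particular the complement $\ft_r$ I produce will automatically consist of derivations. By Theorem \ref{thm:spindlers-constr} write $\g = \fu \rtimes \fl$ with $\fu$ the maximal nilpotent ideal, $[\fu,\fu]\subseteq\fz(\g)$, $\fu = V \oplus \fz(\g)$ with $V = [\ft,\fu]$ for a compactly embedded Cartan subalgebra $\ft$, and $\fl = \fz(\fl)\oplus\fs$ with $\fs = [\fl,\fl]$ and $\fz(\fl)$ sitting inside a compactly embedded subalgebra. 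Then: (i) for $x\in\fu$ one checks $\ad x(\fl)\subseteq V$, $\ad x(V)\subseteq\fz(\g)$ and $\ad x(\fz(\g)) = \{0\}$, so $(\ad x)^3 = 0$, while $[\ad\fu,\ad\fu] = \ad[\fu,\fu] = \{0\}$; hence $\ad\fu$ is an abelian Lie algebra of nilpotent operators, so unipotent and algebraic, $a(\ad\fu) = \ad\fu$. (ii) $\ad\fs\cong\fs$ is semisimple, hence algebraic by Lemma \ref{lem:subalg-algebraic-liebrack}(a). (iii) $\ad\fz(\fl)$ is an abelian family of semisimple operators with purely imaginary spectrum, so $a(\ad\fz(\fl))$ is an algebraic torus, in particular abelian. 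Applying Lemma \ref{lem:subalg-algebraic-liebrack}(b) first with $[\ad\fl,\ad\fu]\subseteq\ad\fu$ and then with $[\ad\fz(\fl),\ad\fs] = \{0\}$ gives
\[ a(\ad\g) = a(\ad\fu) + a(\ad\fs) + a(\ad\fz(\fl)) = \ad\fu + \ad\fs + a(\ad\fz(\fl)), \]
and choosing a vector-space complement $\ft_r$ of $\ad\fz(\fl)$ inside the abelian Lie algebra $a(\ad\fz(\fl))$ yields $a(\ad\g) = \ad\g + \ft_r$ with $\ft_r$ abelian, which is (a).

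For (b), the $3$-grading forces $\spec(D)\subseteq\{-1,0,1\}\subseteq\Z$, so $D$ is semisimple with rational eigenvalues and the one-parameter group $e^{\R D}$ has one-dimensional Zariski closure; hence $\R D$ is algebraic, i.e.\ $a(\R D) = \R D$. Moreover $e^{tD}\in\Aut(\g)$ for all $t$ (it is the flow of the derivation $D$), and conjugation by $e^{tD}$ fixes $\ad\g$ because $e^{tD}(\ad x)e^{-tD} = \ad(e^{tD}x)$; since conjugation by a fixed element of $\GL(\g)$ sends algebraic subgroups to algebraic subgroups, $e^{tD}\,a(\ad\g)\,e^{-tD} = a(\ad\g)$ for all $t$, and differentiating at $t = 0$ gives $[D, a(\ad\g)]\subseteq a(\ad\g)$. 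Now Lemma \ref{lem:subalg-algebraic-liebrack}(b), applied with $\fh = \R D$ and the algebraic Lie algebra $a(\ad\g)$, yields $a(\g_D) = a\bigl(\R D + a(\ad\g)\bigr) = a\bigl(a(\ad\g)\bigr) + a(\R D) = a(\ad\g) + \R D = \g_D$, so $\g_D$ is algebraic.

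The step I expect to require the most care is observation (iii) in part (a): one has to be sure that the algebraic hull of an abelian Lie algebra of semisimple endomorphisms with purely imaginary spectrum is again abelian — equivalently, that the Zariski closure of such a torus is a torus — and, relatedly, confirm cleanly that $\ad\fu$ is genuinely unipotent. Once these structural facts are in place, the rest is bookkeeping with Lemma \ref{lem:subalg-algebraic-liebrack} together with the elementary observation that $e^{tD}$ is an honest automorphism of $\g$.
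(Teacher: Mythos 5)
Your argument is correct, and your part (b) coincides with the paper's proof: the paper simply asserts \(a(\R D)=\R D\) and \([\R D,a(\ad\g)]\subset a(\ad\g)\) without comment, and the justifications you supply (integrality of \(\spec(D)\), and conjugation by \(e^{tD}\in\Aut(\g)\) preserving \(\ad\g\) and hence its algebraic hull) are exactly the right ones. In part (a) you take a slightly different, finer decomposition. The paper writes \(\ad\g=\ad[\g,\g]+\ad\fz(\fl)\) (using Remark \ref{rem:spindler-adm-levi-comm}, which gives \(V\subset[\g,\g]\)), applies Lemma \ref{lem:subalg-algebraic-liebrack}(b) once, and disposes of \(\ad[\g,\g]=[\ad\g,\ad\g]\) in a single stroke via Lemma \ref{lem:subalg-algebraic-liebrack}(a); this avoids your separate verification that \(\ad\fu\) is an algebraic Lie algebra of nilpotent endomorphisms (true, but an extra standard fact your route has to import). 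For the toral part the paper argues via compactness: \(\fz(\fl)\) is compactly embedded, so the closed subgroup \(T_r\) generated by \(e^{\ad\fz(\fl)}\) is compact, hence algebraic, and \(\L(T_r)\) serves as the abelian piece \(\ft_r\). Finally, the step you flag as the most delicate --- that \(a(\ad\fz(\fl))\) is again abelian --- needs no spectral input at all: it is immediate from Lemma \ref{lem:subalg-algebraic-liebrack}(a), since \([a(\fh),a(\fh)]=[\fh,\fh]=\{0\}\) for any abelian \(\fh\). With that observation your proof closes completely.
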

\begin{proof}
  (a) Since \(\g\) is admissible, it is of the form \(\g = \g(\fl,V,\fz,\beta)\) in terms of Spindler's construction (cf.\ Section \ref{sec:lie-adm}). In particular, the center \(\fz(\fl)\) of \(\fl\) is compactly embedded in \(\g\) (Theorem \ref{thm:spindlers-constr}).
  Hence, the group \(T_r := \oline{\la e^{\ad x} : x \in \fz(\fl)\ra_{\mathrm{grp}}} \subset \Aut(\g)\) is compact and therefore an algebraic group with \(\ad_\g \fz(\fl) \subset \L(T_r)\).

  Our assumptions on \(\g\) imply that \(\ad \g = \ad [\g,\g] + \ad \fz(\fl)\). Thus, we have
  \[a(\ad \g) = a([\ad \g,\ad \g]) + a(\ad_\g \fz(\fl)) = [\ad \g,\ad \g] + a(\ad_\g \fz(\fl)) = \ad \g + \L(T_r)\]
  (cf.\ Lemma \ref{lem:subalg-algebraic-liebrack} and Remark \ref{rem:spindler-adm-levi-comm}).
 
  (b) To see that \(\g_D\) is algebraic, we note that \([\R D, \ad \g] \subset \ad \g\), so that \([\R D, a(\ad \g)] \subset a(\ad \g)\) and therefore
  \[a(\g_D) = a(\ad \g) + a(\R D) = a(\ad \g) + \R D = \g_D.\qedhere\]
\end{proof}

\subsection*{Acknowledgement}
I am most grateful to Karl-Hermann Neeb for his support during my research towards this article and for reading earlier versions of the manuscript.

\end{document}